\documentclass[11pt]{article}

\usepackage[utf8]{inputenc}
\usepackage[T1]{fontenc}
\usepackage{amsmath,amssymb,amsthm,mathtools}
\usepackage{graphicx}
\usepackage{booktabs}
\usepackage[round]{natbib}
\usepackage[margin=2.7cm]{geometry}
\usepackage{xcolor}
\usepackage[colorlinks=true,linkcolor=blue,citecolor=blue,urlcolor=blue]{hyperref}

\newtheorem{theorem}{Theorem}[section]
\newtheorem{lemma}[theorem]{Lemma}
\newtheorem{proposition}[theorem]{Proposition}
\newtheorem{corollary}[theorem]{Corollary}
\theoremstyle{definition}
\newtheorem{definition}[theorem]{Definition}
\newtheorem{assumption}[theorem]{Assumption}
\theoremstyle{remark}
\newtheorem{remark}[theorem]{Remark}

\title{Basin-Preserving Discretizations of Modern Hopfield Retrieval Dynamics:
Energy Cells, Dissipation, and the Attention Limit}
\author{Francisco R.~Villatoro\\[2pt]
\small Dept.\ de Lenguajes y Ciencias de la Computaci\'on, Universidad de M\'alaga, Spain\\
\small \texttt{frvillatoro@uma.es}\\
\small ORCID: \href{https://orcid.org/0000-0003-4314-6213}{0000-0003-4314-6213}}
\date{}

\begin{document}

\maketitle

\begin{abstract}
The retrieval dynamics of a modern Hopfield network is the gradient flow of a log-sum-exp energy, while the attention update is its exact difference-of-convex minimization step. We study which time discretizations preserve not only energy decay and equilibria but also basins of attraction. We introduce energy cells, connected components of sublevel sets containing one attractor and no other critical point. Our main theorem shows that every finite energy cell below the escape energy is contained simultaneously in the basin of the continuous flow, every relaxed attention map $\Psi_\theta=(1-\theta)\,\mathrm{id}+\theta\,\mathrm{attention}$ for $0<\theta<2$, and implicit Euler throughout its uniqueness regime. A parameter-uniform unit-curvature majorant yields unconditional dissipation and a monotone interpolation of each discrete step. We also derive explicit local contraction bounds near well-separated patterns, with a certified optimal slight overrelaxation; characterize proximal tunneling and overshoot beyond the preservation regimes; compare first-order error constants; establish an order barrier for scalar reparametrizations of the relaxed family; construct a second-order scalar-auxiliary-variable scheme; and extend cell preservation to damped difference-of-convex iterations in Bregman geometry, including a certified overrelaxed window under bounded asymmetry. Nine numerical campaigns test the bounds and failure mechanisms. In two-dimensional basin experiments, all observed disagreements between continuous and discrete retrieval occur above the attractor-specific numerically inferred escape level.
\end{abstract}

\noindent\textbf{Keywords:} modern Hopfield networks; associative memory; attention; gradient flows; time discretization; basin preservation; difference-of-convex programming.

% ---------- body (Sections 1-8) -------------------------------------------
\section{Introduction}\label{sec:intro}

A dense associative memory, or modern Hopfield network, stores patterns $\xi_1,\xi_2,\dots,\xi_N \in \mathbb{R}^d$ and retrieves one of them from a corrupted query by descending the energy $E(x) = \tfrac12\|x\|^2 - \beta^{-1}\log\sum_\mu e^{\beta\xi_\mu^{\top}x}$ \citep{krotov2016,demircigil2017,ramsauer2021}. The exact difference-of-convex minimization step of this energy is $x \mapsto X\operatorname{softmax}(\beta X^{\top}x)$, the attention update of transformers, and the observation that attention is one such step \citep{ramsauer2021} has made these networks a shared object of the associative memory, optimization and deep learning communities. This paper looks at the same object from numerical analysis. Retrieval is the gradient flow $\dot x = -\nabla E(x)$; the iterative retrieval procedures studied here, including attention and its relaxed variants, admit time-discretization interpretations of that flow; and the question a memory poses to a discretization is not merely whether energy decays or equilibria survive, but whether the \emph{basins of attraction} of the stored patterns, the sets of queries from which each memory is recovered, are preserved.

Our thesis is that a well-defined part of each basin is preserved exactly, by the flow and by an entire family of discretizations at once, and that this common core is described by an object as elementary as the energy itself. An \emph{energy cell} of an attractor $x^\ast$ is the connected component of a sublevel set $\{E < c\}$ containing $x^\ast$; below the escape energy $c^\ast(x^\ast)$, the threshold up to which the component contains no critical point other than $x^\ast$, the cell contains no critical point but $x^\ast$. The central result, Theorem~\ref{thm:cells}, states that every such cell is contained simultaneously in the basin of the flow, in the basin of every member of the \emph{relaxed attention family} $\Psi_\theta(x) = (1-\theta)x + \theta X\operatorname{softmax}(\beta X^{\top}x)$ for all $\theta \in (0,2)$, and in the basin of the implicit Euler scheme throughout its uniqueness regime, with no condition on $\beta$, on the patterns, or on the step size beyond these ranges. Energy cells are common certified basin cores of the flow and its structure-preserving discretizations. Everything else in the paper either builds toward this statement, delimits it, or measures it.

The mechanism is a single inequality. The energy is the difference of the convex quadratic $\tfrac12\|x\|^2$ and the convex log-sum-exp, and this structure yields a global quadratic majorant of curvature exactly one, $E(y) \le E(x) + \langle\nabla E(x), y-x\rangle + \tfrac12\|y-x\|^2$, uniformly in $\beta$, $N$ and the pattern geometry (Lemma~\ref{lem:majorization}), even though the Hessian of $E$ is unbounded below as $\beta$ grows. Explicit Euler, semi-implicit convex splitting and exponential integration of the flow all collapse into the family $\Psi_\theta$ (Proposition~\ref{prop:family}), whose $\theta = 1$ member is attention and whose $\theta \to 1$ limits are those of the semi-implicit schemes as their step tends to infinity; the majorant gives unconditional dissipation $E(\Psi_\theta(x)) \le E(x) - \tfrac12\theta(2-\theta)\|\nabla E(x)\|^2$ on $\theta \in (0,2)$, exact up to the Bregman remainder of the log-sum-exp (Theorem~\ref{thm:dissipation}), and, evaluated along the segment between consecutive iterates, a continuous interpolant of every discrete orbit along which the energy never rises (Lemma~\ref{lem:interpolation}). Cells, being connected components of sublevel sets, cannot be left by such an interpolant, and precompactness together with the vanishing of the gradient identifies the unique limit; no analyticity or {\L}ojasiewicz argument is needed inside a cell.

The rest of the paper delimits and tests this core. Locally, near a pattern well separated from its competitors, we give explicit contraction factors for the whole family, exponentially small at $\theta = 1$ and minimized at a certified slight overrelaxation $\theta_\star = 2/(2-\ell)$, and unconditional contraction for the local branch of the implicit scheme, whose infinite-step limit is the deep-equilibrium formulation of retrieval rather than attention (Section~\ref{sec:local}); the certified cells are shown to lie between concentric balls with radii in the ratio $(1-\ell)^{-1/2}$ (Corollary~\ref{cor:sandwich}). Two propositions exhibit the complementary mechanisms by which preservation fails the \emph{global proximal selection} of the implicit Euler relation, which we distinguish carefully from its local branch, tunnels out of non-global cells beyond an explicit step threshold, at which point local minimizers cease to be its fixed points, and the explicit Euler map turns the attractor into a repeller beyond $\theta \approx 2$ (Section~\ref{sec:failure}). On accuracy and cost, an order barrier shows that no reparametrization of the family reaches second order under a verifiable non-degeneracy hypothesis, the exponential member is singled out by an error constant carried entirely by the softmax curvature, a second-order scalar-auxiliary-variable scheme is constructed with an exact modified-energy law at the cost of one attention evaluation per step and benchmarked against the classical discrete-gradient methods that dissipate the true energy exactly, and a closed-form analysis of the warm-started Picard hierarchy shows that no allocation of inner iterations certifies a better per-evaluation factor than attention (Sections~\ref{sec:order}--\ref{sec:cost}). A perspective section transfers the dissipation identity and the cell-preservation theorem to damped difference-of-convex iterations in Bregman geometry, where the overrelaxed window narrows to $\theta < 1 + 1/\kappa$ under an asymmetry bound $\kappa$ (Section~\ref{sec:generalized}). Nine numerical campaigns test each falsifiable prediction (Section~\ref{sec:experiments}); in particular, basins computed by high-precision integration of the flow, with critical points located by Newton iteration and continuous separatrices given by the stable manifolds of the saddles, disagree with the discrete basins at $12{,}727$ grid nodes across four values of $\theta$, and not one of them lies below the attractor-specific numerically inferred escape level used in that campaign.

The reader interested only in the core result may read Sections~\ref{sec:framework}, \ref{sec:globaldiss} and \ref{sec:basins}, which are self-contained.

\paragraph{Related work.}
The interpretation of memory retrieval in modern Hopfield networks as an optimization procedure is by now standard. \citet{ramsauer2021} derived the attention update as one step of the concave--convex procedure (CCCP) \citep{yuille2003} applied to the log-sum-exp energy, and subsequent work has sharpened this viewpoint. \citet{hu2023sparse} obtained sparse retrieval dynamics from a sparsemax energy via CCCP, connecting stationary points of the energy to fixed points of the dynamics through Zangwill's global convergence theory, and \citet{santos2024hfy} unified dense, sparse and structured variants through Fenchel--Young losses, proving exact one-step convergence to individual patterns under margin conditions. All of these analyses operate at the limit of exact minimization of the convex majorizer in each iteration, which in the terminology of the present paper corresponds to the limit $\Delta t \to \infty$ of a semi-implicit convex-splitting discretization \citep{eyre1998} of the retrieval flow $\dot{x} = -\nabla E(x)$. The intermediate regime of finite step sizes, the temporal accuracy of the resulting one-parameter family of schemes, and the effect of the discretization on the basins of attraction of the stored patterns have, to the best of our knowledge, not been analyzed.

Closest to our work is \citet{goemaere2024}, who accelerate the digital simulation of classical continuous Hopfield networks and hierarchical associative memories by recasting the equilibrium computation as a deep equilibrium model, solved with Anderson acceleration, and by an even-odd splitting that updates the two blocks of the bipartite layer graph alternately, roughly halving the iteration count. Their goal and ours are complementary rather than overlapping. They seek the fixed point as fast as possible, treating the trajectory as irrelevant, and their accelerated solvers explicitly forgo the energy-descent guarantee, which is assumed rather than proved to hold in practice; moreover, their analysis covers the classical energies of \citet{hopfield1984,krotov2021ham} and expressly excludes the log-sum-exp energies of modern Hopfield networks. In contrast, we regard the retrieval dynamics as a gradient flow to be integrated by structure-preserving methods, we establish unconditional energy dissipation and per-step contraction estimates for finite $\Delta t$, including the implicit Euler (proximal) scheme whose unique solvability holds under a one-sided Lipschitz restriction on the step, and we localize analytically where discrete and continuous basins can differ and quantify that discrepancy numerically in representative low-dimensional examples. The equivalence, noted by \citet{goemaere2024}, between damped Picard iteration on the equilibrium equation and the forward Euler method on the flow places their baseline scheme at the explicit end of the family of integrators studied here, while the attention update of \citet{ramsauer2021} sits at the opposite, fully implicit-in-the-convex-part end.

Our analytical tools descend from the nonlinear measure approach of \citet{qiao2001}, extended to implicit Euler discretizations of Hopfield networks in \citet{villatoro2005}, and are closely related to the recent non-Euclidean contraction theory of continuous-time neural networks \citep{davydov2025tac} and its application to the well-posedness of implicit networks \citep{jafarpour2021,davydov2024jmlr}. These works establish contractivity of the continuous-time flow, or existence and uniqueness of equilibria of implicit models, in weighted $\ell_1/\ell_\infty$ norms, and the preservation of strong contractivity under explicit and implicit Runge--Kutta discretizations has recently been characterized by \citet{kawano2026}. Our setting is complementary: retrieval operates in the multistable regime where global contractivity fails by design, and the object preserved by the discretization is not a contraction rate but the family of energy cells of Section~\ref{sec:basins}. On the optimization side, the damped difference-of-convex iterations underlying Section~\ref{sec:generalized} have been developed independently, with a Bregman-divergence reading of DCA by \citet{faust2023} and a continuous-time and damped-DCA theory by \citet{niu2026}; and the unstable fixed points that organize the basin boundaries above the escape energy of Section~\ref{sec:basins} have been shown to exist under natural geometric conditions by \citet{beise2026}. Iterated and damped retrieval updates have begun to appear on the modeling side: repeated Hopfield steps under local contraction collapse nearby queries onto a common fixed point \citep{sargolzaei2026}; damped Hopfield dynamics with monotone energy descent is used for query refinement on the hypersphere, with an explicitly acknowledged gap between the asymptotic theory and single-step deployment \citep{horen2026}; the drift of Langevin sampling on the Hopfield energy is an explicit Euler step of the flow \citep{alswaidan2026}; the iterated dynamics of trained Hopfield classifiers has been examined empirically \citep{mhndyn2026}; and basin-invariance results have been proved for the discretization of a different, measure-valued retrieval flow of spherical Hellinger--Kantorovich type \citep{mustafi2026}. Convergence of the exact CCCP step has also been established for a softmin variant of the modern Hopfield energy \citep{bao2026}, a different energy from \eqref{eq:energy}. None of these develops a numerical-analysis theory of the standard log-sum-exp flow. To our knowledge, no prior deterministic treatment of that flow establishes a common sublevel-component basin core simultaneously for the continuous dynamics and a family of relaxed and implicit discretizations, with quantitative certificates and failure thresholds, which is the contribution of Sections~\ref{sec:dissipation}--\ref{sec:basins}. Finally, our setting should be distinguished from the interacting particle description of transformers \citep{geshkovski2025}, which studies the coupled evolution of all tokens and their long-time clustering, whereas we study the retrieval of a single query against a fixed set of stored memories, the regime in which associative memory guarantees are formulated \citep{ramsauer2021,hoover2023}.

% Section 2 -- draft v1
% Assumes: amsmath, amsthm, natbib; theorem environments: lemma, proposition, corollary, remark, definition (numbered within section).
% Notation conventions: patterns \xi_\mu, memory matrix X, query/state x, inverse temperature \beta.

\section{A one-parameter family of time integrators for memory retrieval}\label{sec:framework}

\subsection{The retrieval flow and its energy}\label{sec:flow}

Let $\xi_1,\dots,\xi_N \in \mathbb{R}^d$ denote the stored patterns, collected as the columns of the memory matrix $X = (\xi_1,\dots,\xi_N) \in \mathbb{R}^{d\times N}$, and let $\beta > 0$ be the inverse temperature. The modern Hopfield network of \citet{ramsauer2021}, building on the dense associative memories of \citet{krotov2016} and \citet{demircigil2017}, retrieves a stored pattern from a query by descending the energy
\begin{equation}\label{eq:energy}
E(x) \;=\; \tfrac{1}{2}\,\|x\|_2^2 \;-\; \operatorname{lse}_\beta\!\big(X^{\top}x\big),
\qquad
\operatorname{lse}_\beta(z) \;=\; \beta^{-1}\log \sum_{\mu=1}^{N} e^{\beta z_\mu},
\end{equation}
where we omit the additive constants of \citet{ramsauer2021}, which play no role in the dynamics. Throughout, $\|\cdot\|$ denotes the Euclidean norm, $\sigma_{\max}(X)$ the largest singular value of $X$, and $M = \max_{\mu} \|\xi_\mu\|$. The gradient of \eqref{eq:energy} is
\begin{equation}\label{eq:gradient}
\nabla E(x) \;=\; x - X\,p(x),
\qquad
p(x) \;=\; \operatorname{softmax}\!\big(\beta X^{\top} x\big) \in \Delta^{N-1},
\end{equation}
with $\Delta^{N-1}$ the probability simplex, and the retrieval dynamics is the gradient flow
\begin{equation}\label{eq:flow}
\dot{x} \;=\; -\nabla E(x) \;=\; -x + X\,p(x),
\end{equation}
formally a Hopfield equation $\dot{x} = -x + f(x)$ in which the nonlinearity $f = X p(\cdot)$ couples all components of the state through the softmax, in contrast with the componentwise transfer functions of the classical model \citep{hopfield1984}. The following lemma collects the structural properties of \eqref{eq:energy} on which the whole paper rests.

\begin{lemma}[Structure of the energy]\label{lem:structure}
Let $E$ be given by \eqref{eq:energy}. Then the following hold.
\begin{enumerate}
\item[\textup{(a)}] \textup{(DC structure)} $E = E_{+} - E_{-}$ with $E_{+}(x) = \tfrac12\|x\|^2$ and $E_{-}(x) = \operatorname{lse}_\beta(X^{\top}x)$ both convex, and $E_{-}$ has gradient $\nabla E_{-}(x) = X p(x) \in \operatorname{conv}\{\xi_1,\dots,\xi_N\}$.
\item[\textup{(b)}] \textup{(Curvature bounds)} For every $x \in \mathbb{R}^d$,
\begin{equation}\label{eq:hessianbounds}
\Big(1 - \tfrac{\beta}{2}\,\sigma_{\max}^2(X)\Big) I \;\preceq\; \nabla^2 E(x) \;\preceq\; I .
\end{equation}
\item[\textup{(c)}] \textup{(One-sided Lipschitz constant)} The vector field $F = -\nabla E$ of \eqref{eq:flow} satisfies $\langle F(x)-F(y),\,x-y\rangle \le \nu\,\|x-y\|^2$ for all $x,y$, with $\nu = \tfrac{\beta}{2}\,\sigma_{\max}^2(X) - 1$.
\item[\textup{(d)}] \textup{(Coercivity and location of equilibria)} $E(x) \ge \tfrac12\|x\|^2 - M\|x\| - \beta^{-1}\log N$, so $E$ is coercive; equilibria of \eqref{eq:flow} exist, satisfy $x^\ast = X p(x^\ast)$, and therefore lie in $\operatorname{conv}\{\xi_1,\dots,\xi_N\}$, in particular $\|x^\ast\| \le M$.
\end{enumerate}
\end{lemma}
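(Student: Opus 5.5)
The plan is to verify the four items in order by direct computation with the softmax, using only standard facts about log-sum-exp. For (a), convexity of $E_+$ is immediate. Convexity of $E_-$ follows because $\operatorname{lse}_\beta$ is convex on $\mathbb{R}^N$, and precomposition with the linear map $x\mapsto X^\top x$ preserves convexity. Differentiating with the chain rule, $\nabla \operatorname{lse}_\beta(z)=\operatorname{softmax}(\beta z)$, gives $\nabla E_-(x)=X\,p(x)$. Since $p(x)\in\Delta^{N-1}$, this gradient is a convex combination of the columns $\xi_\mu$. Formula \eqref{eq:gradient} then follows at once.

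For (b), I compute $\nabla^2 E_-(x)=\beta X\big(\operatorname{diag}(p)-pp^\top\big)X^\top$, which is the softmax covariance pushed forward by $X$. It is positive semidefinite, being $\beta$ times the covariance of the random vector $\xi_J$ with $J\sim p$. This gives $\nabla^2E=I-\nabla^2E_-\preceq I$. For the lower bound I need $\lambda_{\max}\big(\operatorname{diag}(p)-pp^\top\big)\le\tfrac12$. This is the step requiring an argument beyond routine calculus. I would use the Gershgorin bound: row $\mu$ has diagonal entry $p_\mu(1-p_\mu)$ and off-diagonal absolute row sum $p_\mu\sum_{\nu\ne\mu}p_\nu=p_\mu(1-p_\mu)$. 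Every eigenvalue is therefore at most $2p_\mu(1-p_\mu)\le\tfrac12$. Alternatively, I would use the variance bound $\operatorname{Var}(v_J)\le \tfrac14(\max v-\min v)^2$ together with $(v_\mu-v_\nu)^2\le 2(v_\mu^2+v_\nu^2)$. Then $u^\top X(\operatorname{diag}p-pp^\top)X^\top u\le\tfrac12\|X^\top u\|^2\le\tfrac12\sigma_{\max}^2(X)\|u\|^2$, which yields \eqref{eq:hessianbounds}.

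For (c), I write $F(x)-F(y)=-\int_0^1\nabla^2E(y+s(x-y))\,ds\,(x-y)$ by the mean value theorem in integral form. Pairing with $x-y$ and using the lower Hessian bound from (b) gives $\langle F(x)-F(y),x-y\rangle\le-(1-\tfrac\beta2\sigma_{\max}^2)\|x-y\|^2=\nu\|x-y\|^2$.

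For (d), I bound $\operatorname{lse}_\beta(z)\le\max_\mu z_\mu+\beta^{-1}\log N$, and by Cauchy--Schwarz $\max_\mu\xi_\mu^\top x\le M\|x\|$. This gives the stated lower bound on $E$, which tends to $+\infty$, so $E$ is coercive. A continuous coercive function attains its minimum, and the minimizer is a critical point; hence equilibria exist. Any equilibrium satisfies $\nabla E(x^\ast)=0$, that is $x^\ast=Xp(x^\ast)$. By (a) this point lies in $\operatorname{conv}\{\xi_\mu\}$, and the norm is convex, so $\|x^\ast\|\le\max_\mu\|\xi_\mu\|=M$.

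The only step I expect to need care is the constant $\tfrac12$ in (b). The naive bound $\operatorname{diag}(p)-pp^\top\preceq\operatorname{diag}(p)\preceq I$ loses a factor of two, so I would commit to the Gershgorin argument.
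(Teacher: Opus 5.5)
Your proof is correct and follows the paper's argument item by item. The only deviation is in (b): you commit to the Gershgorin row-sum bound $\lambda_{\max}(H(p))\le\max_\nu 2p_\nu(1-p_\nu)\le\tfrac12$ for the constant $\tfrac12$, whereas the paper uses the Popoviciu variance bound that you list as your alternative. Both routes give the same constant. The Gershgorin version has a small bonus: since $2p_\nu(1-p_\nu)\le 2(1-p_\mu)$ for every $\nu$, it also yields the localized estimate $\|H(p)\|\le 2(1-p_\mu)$ that the paper proves separately in Lemma~\ref{lem:concentration}(iii).
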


\begin{proof}
Part (a) is immediate since $\operatorname{lse}_\beta$ is convex and nondecreasing in each argument and $X^{\top}x$ is linear, while $\nabla E_{-}(x) = Xp(x)$ is a convex combination of the columns of $X$. For (b), differentiation of \eqref{eq:gradient} gives $\nabla^2E(x) = I - \beta\, X H(p) X^{\top}$ with $H(p) = \operatorname{diag}(p) - p\,p^{\top} \succeq 0$, which yields the upper bound. For the lower bound, note that for any unit vector $v \in \mathbb{R}^N$ the quadratic form $v^{\top}H(p)\,v = \sum_\mu p_\mu v_\mu^2 - \big(\sum_\mu p_\mu v_\mu\big)^2$ is the variance of a random variable taking the value $v_\mu$ with probability $p_\mu$; by Popoviciu's inequality this variance is at most $\tfrac14(\max_\mu v_\mu - \min_\mu v_\mu)^2 \le \tfrac12\|v\|^2 = \tfrac12$, hence $H(p) \preceq \tfrac12 I$ and $\beta X H(p) X^{\top} \preceq \tfrac{\beta}{2}\sigma_{\max}^2(X)\, I$. Part (c) follows from the lower bound in \eqref{eq:hessianbounds} by the mean value theorem applied to $t \mapsto \langle \nabla E(y + t(x-y)), x - y\rangle$. Part (d) follows from $\operatorname{lse}_\beta(X^{\top}x) \le \max_\mu \xi_\mu^{\top}x + \beta^{-1}\log N \le M\|x\| + \beta^{-1}\log N$, existence of a global minimizer by coercivity, and the fixed point form of $\nabla E(x^\ast) = 0$.
\end{proof}

Two consequences of Lemma~\ref{lem:structure} shape the analysis. First, when $\beta\,\sigma_{\max}^2(X) < 2$ the constant $\nu$ in (c) is negative, the flow \eqref{eq:flow} is a strict contraction in the Euclidean norm, $E$ is strongly convex, and the equilibrium is unique; useful memory retrieval requires leaving this regime, so $\beta\,\sigma_{\max}^2(X) \ge 2$ is \emph{necessary} for multistability. It is not sufficient, since for $N = 1$ the log-sum-exp term is affine and $E$ is strongly convex for every $\beta$, however large $\beta\,\sigma_{\max}^2(X)$ may be. Throughout the paper the multistable regime is therefore certified operationally, through the retrieval condition \eqref{eq:retrieval-cond} of Section~\ref{sec:local} holding at two or more patterns, rather than through the spectral bound; every stability statement in that regime is necessarily local and metastable. This inverts the emphasis of the classical stability literature for discrete-time Hopfield networks \citep{qiao2001,villatoro2005}, where global exponential stability is the desired property; here globally contracting dynamics correspond to a memory that has collapsed to a single spurious attractor. Second, the upper bound $\nabla^2E \preceq I$ holds uniformly in $\beta$, a fact that is far from obvious at first sight, since the Lipschitz constant of $\nabla E$ grows linearly in $\beta$ through the lower bound in \eqref{eq:hessianbounds}. The asymmetry between the two bounds, bounded curvature from above and unbounded from below, is precisely the difference-of-convex structure of (a), and it is what allows unconditional energy dissipation results for semi-implicit schemes in Section~\ref{sec:dissipation}.

\subsection{Four elementary time discretizations}\label{sec:schemes}

Fix a step size $\Delta t > 0$ and denote by $x^k \approx x(k\,\Delta t)$ the discrete trajectory. We consider four one-step discretizations of \eqref{eq:flow}, chosen because each occupies a distinguished position with respect to the DC structure of Lemma~\ref{lem:structure}(a).

\begin{definition}[Basic schemes]\label{def:schemes}
The \emph{explicit Euler} (EE), \emph{implicit Euler} (IE), \emph{convex splitting} (CS) and \emph{exponential} (ETD) schemes are given, respectively, by
\begin{align}
\textup{(EE)}\quad & x^{k+1} = x^{k} - \Delta t\,\nabla E(x^{k}) = (1-\Delta t)\,x^{k} + \Delta t\, X p(x^{k}), \label{eq:ee}\\[2pt]
\textup{(IE)}\quad & x^{k+1} = x^{k} - \Delta t\,\nabla E(x^{k+1}), \label{eq:ie}\\[2pt]
\textup{(CS)}\quad & \frac{x^{k+1}-x^{k}}{\Delta t} = -\nabla E_{+}(x^{k+1}) + \nabla E_{-}(x^{k}), \quad\text{i.e.}\quad x^{k+1} = \frac{x^{k} + \Delta t\, X p(x^{k})}{1+\Delta t}, \label{eq:cs}\\[2pt]
\textup{(ETD)}\quad & x^{k+1} = e^{-\Delta t}\,x^{k} + \big(1 - e^{-\Delta t}\big)\, X p(x^{k}), \label{eq:etd}
\end{align}
where \eqref{eq:cs} treats the convex part of the energy implicitly and the concave part explicitly in the sense of \citet{eyre1998}, and \eqref{eq:etd} is the first-order exponential integrator applied to the semilinear form $\dot{x} = -x + Xp(x)$ of the flow.
\end{definition}

\begin{definition}[Implicit Euler objects]\label{def:ie-objects}
For a step size $\Delta t>0$, the \emph{implicit Euler relation} at $x\in\mathbb{R}^d$ is the equation
\[
 y=x-\Delta t\,\nabla E(y)
\]
in the unknown $y$. The associated \emph{global proximal map} is the generally set-valued map
\[
 \mathcal{P}_{\Delta t}(x)
 :=
 \operatorname*{argmin}_{y\in\mathbb{R}^d}
 \left\{E(y)+\frac{1}{2\Delta t}\|y-x\|^2\right\}.
\]
It is nonempty for every $x$ and every $\Delta t>0$ by coercivity of $E$, and every $y\in\mathcal{P}_{\Delta t}(x)$ solves the implicit Euler relation. Whenever a single-valued global proximal orbit is considered, $P_{\Delta t}$ denotes an arbitrary selection satisfying $P_{\Delta t}(x)\in\mathcal{P}_{\Delta t}(x)$; statements about such a selection hold for every choice unless stated otherwise. The \emph{local branch}, denoted $\Phi_{\Delta t}$ and defined on retrieval balls in Section~\ref{sec:local}, assigns to $x$ the unique solution of the relation lying in the same retrieval ball. If $\Delta t\,\nu<1$, the proximal functional is strongly convex, the implicit Euler relation has exactly one solution, and
\[
 \mathcal{P}_{\Delta t}(x)=\{P_{\Delta t}(x)\}=\{\Phi_{\Delta t}(x)\}
\]
wherever the local branch is defined. Outside this uniqueness regime the distinction is substantive. The local branch may continue to retrieve a designated memory while every global proximal minimizer may tunnel to a lower-energy basin; see Proposition~\ref{prop:tunneling}.
\end{definition}

The relation requires solving a nonlinear system at each step, and the well-posedness of its selections follows the pattern familiar from the classical discrete-time Hopfield literature \citep{atencia2002,villatoro2005}. Since $E$ is coercive by Lemma~\ref{lem:structure}(d), the proximal functional $y \mapsto E(y) + \tfrac{1}{2\Delta t}\|y - x^{k}\|^2$ attains its global minimum for every $\Delta t > 0$, and every global minimizer satisfies \eqref{eq:ie}; the step is thus well defined as the set-valued update $x^{k+1} \in \mathcal{P}_{\Delta t}(x^{k})$ for arbitrary step sizes. If moreover $\Delta t\,\nu < 1$, with $\nu$ the one-sided Lipschitz constant of Lemma~\ref{lem:structure}(c), then the proximal functional is $(\tfrac{1}{\Delta t}-\nu)$-strongly convex, the step is single valued, and \eqref{eq:ie} has exactly one solution. When $\nu>0$, the threshold $\Delta t < 1/\nu$ is the exact analogue, for the energy \eqref{eq:energy}, of the step size restriction under which the implicit Euler discretization of the classical Hopfield network admits a unique solution and inherits the Lyapunov function of the continuous model \citep{atencia2002}, and of the uniqueness condition in the nonlinear measure analysis of \citet{villatoro2005}; when $\nu\le0$, uniqueness holds for every $\Delta t>0$. The practical cost of the implicit step is deferred to Section~\ref{sec:cost}; we only note here that the Hessian of the proximal functional is the identity plus a correction of rank at most $N$, so that Newton iterations are inexpensive whenever $N \ll d$ or the Woodbury identity applies.

\subsection{The damped attention family and the limit $\Delta t \to \infty$}\label{sec:family}

The three explicit-in-$E_-$ schemes of Definition~\ref{def:schemes} collapse, for the specific energy \eqref{eq:energy}, into a single one-parameter family of maps. Define, for $\theta > 0$,
\begin{equation}\label{eq:family}
\Psi_\theta(x) \;=\; (1-\theta)\,x + \theta\, X p(x) \;=\; x - \theta\,\nabla E(x).
\end{equation}

\begin{proposition}[Reparametrization and the attention limit]\label{prop:family}
For the energy \eqref{eq:energy}, the maps \eqref{eq:ee}, \eqref{eq:cs} and \eqref{eq:etd} coincide with $\Psi_\theta$ under the substitutions
\[
\theta_{\mathrm{EE}} = \Delta t, \qquad
\theta_{\mathrm{CS}} = \frac{\Delta t}{1+\Delta t}, \qquad
\theta_{\mathrm{ETD}} = 1 - e^{-\Delta t}.
\]
The maps $\Delta t \mapsto \theta_{\mathrm{CS}}$ and $\Delta t \mapsto \theta_{\mathrm{ETD}}$ are strictly increasing bijections from $(0,\infty)$ onto $(0,1)$; consequently the CS and ETD schemes generate identical discrete orbits up to a relabeling of the step size, both are underrelaxations of the update rule of \citet{ramsauer2021}, and both satisfy
\[
\lim_{\Delta t \to \infty} x^{k+1} \;=\; \Psi_1(x^{k}) \;=\; X\operatorname{softmax}\!\big(\beta X^{\top}x^{k}\big),
\]
which is the attention update. Moreover, for every $\theta > 0$ the fixed points of $\Psi_\theta$, and for every $\Delta t > 0$ the fixed points of the implicit Euler \emph{relation} of Definition~\ref{def:ie-objects}, coincide exactly with the critical points of $E$. The set-valued fixed points $x \in \mathcal{P}_{\Delta t}(x)$ are instead the $\Delta t$-proximally stationary points $\{x : E(z) \ge E(x) - \tfrac{1}{2\Delta t}\|z-x\|^2 \text{ for all } z\}$. These sets are nested decreasing as $\Delta t$ increases, contain every critical point when $\Delta t\,\nu < 1$, and their intersection over all $\Delta t>0$ is exactly the set of global minimizers of $E$; see Proposition~\ref{prop:tunneling}.
\end{proposition}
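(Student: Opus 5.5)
The plan is to verify each claim in turn. Most are direct algebra; the main work lies in the statements about the set-valued proximal fixed points.

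\emph{Reparametrization and attention limit.} Write each of \eqref{eq:ee}, \eqref{eq:cs}, \eqref{eq:etd} in the form $(1-\theta)x^k+\theta Xp(x^k)$ and read off $\theta$. For EE this is immediate. For CS, $\frac{1}{1+\Delta t}=1-\frac{\Delta t}{1+\Delta t}$. For ETD, the coefficients are $e^{-\Delta t}$ and $1-e^{-\Delta t}$. Both $\Delta t/(1+\Delta t)$ and $1-e^{-\Delta t}$ have positive derivatives, vanish at $0^+$ and tend to $1$ as $\Delta t\to\infty$. By continuity and strict monotonicity they are therefore bijections onto $(0,1)$. Composing one with the inverse of the other gives the relabeling, so CS and ETD produce identical orbits. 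Since $\theta<1$, both schemes are underrelaxations. Because $x\mapsto\Psi_\theta(x)$ is affine in $\theta$, letting $\theta\to1$ gives $\Psi_1(x^k)=X\operatorname{softmax}(\beta X^\top x^k)$.

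\emph{Fixed points of $\Psi_\theta$ and of the IE relation.} For $\Psi_\theta$, $x=x-\theta\nabla E(x)$ holds if and only if $\nabla E(x)=0$, since $\theta>0$. For the IE relation, substitute $y=x$ to get $\Delta t\,\nabla E(x)=0$.

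\emph{Proximal fixed points.}
\begin{itemize}
\item \emph{Characterization.} The condition $x\in\mathcal P_{\Delta t}(x)$ says that $E(z)+\frac1{2\Delta t}\|z-x\|^2\ge E(x)$ for all $z$, which is the stated inequality.
\item \emph{Nesting.} If $\Delta t'>\Delta t$, then $\frac{1}{2\Delta t'}<\frac1{2\Delta t}$. The inequality for $\Delta t'$ is therefore stronger, so the sets decrease as $\Delta t$ increases.
\item \emph{Critical points are included when $\Delta t\,\nu<1$.} The functional $z\mapsto E(z)+\frac1{2\Delta t}\|z-x\|^2$ is strongly convex, with Hessian at least $(1/\Delta t-\nu)I$ by Lemma~\ref{lem:structure}(b),(c). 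At a critical point $x$ its gradient at $z=x$ is $\nabla E(x)=0$, so $x$ is its unique global minimizer.
\item \emph{Intersection.} If the inequality holds for every $\Delta t$, let $\Delta t\to\infty$ with $z$ fixed to get $E(z)\ge E(x)$, so $x$ is a global minimizer. Conversely, a global minimizer satisfies the inequality trivially for every $\Delta t$.
\end{itemize}

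The only step needing care is the inclusion of critical points under $\Delta t\,\nu<1$. There one must check that the one-sided Lipschitz bound gives strong convexity of the proximal functional, and that the first-order condition then certifies global minimality. The cross-reference to Proposition~\ref{prop:tunneling} is only illustrative and is not needed for the proof.
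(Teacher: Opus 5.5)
Your proposal is correct and follows the paper's proof essentially step for step. Both use direct algebra for the three substitutions, monotonicity and limits of $\theta(\Delta t)$, cancellation of $\theta$ or $\Delta t$ in the fixed-point equations, and comparison of the quadratic penalties for the characterization, nesting and intersection of the proximally stationary sets. The one place you go beyond the paper is the inclusion of critical points when $\Delta t\,\nu<1$: the paper's proof leaves this implicit (it follows from uniqueness of the implicit Euler solution in Definition~\ref{def:ie-objects}), while your strong-convexity argument via $\nabla^2E+\Delta t^{-1}I\succeq(\Delta t^{-1}-\nu)I$ settles it directly.
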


\begin{proof}
The identities for $\theta_{\mathrm{EE}}$, $\theta_{\mathrm{CS}}$ and $\theta_{\mathrm{ETD}}$ are elementary algebra on \eqref{eq:ee}, \eqref{eq:cs} and \eqref{eq:etd}, using the second expression in \eqref{eq:family}. Monotonicity and the ranges of the substitutions are immediate, as is the limit $\theta \to 1$ as $\Delta t \to \infty$. For the fixed points, $\Psi_\theta(x) = x$ with $\theta \ne 0$ is equivalent to $\nabla E(x) = 0$, and $x = x - \Delta t\,\nabla E(x)$ likewise; the characterization of the points with $x \in \mathcal{P}_{\Delta t}(x)$ is immediate from the definition of a global minimizer of the proximal functional; nestedness follows because the quadratic penalty decreases with $\Delta t$, and intersecting the resulting inequalities over arbitrarily large $\Delta t$ leaves precisely $E(z)\ge E(x)$ for all $z$.
\end{proof}

Proposition~\ref{prop:family} deserves three comments. First, it identifies the attention update, which is simultaneously one exact step of the concave--convex procedure on \eqref{eq:energy} \citep{yuille2003,ramsauer2021} and one gradient descent step of unit length \citep{incontextdenoising2025}, as the extreme member $\theta = 1$ of a family of underrelaxed integrators, with the semi-implicit convex splitting of \citet{eyre1998} and the exponential integrator as the interior members and the explicit Euler method as the only member able to leave the interval $\theta \in (0,1]$. The customary choice $\Delta t = 1$ in the neural network literature, which for the classical Hopfield model produces the Takeda--Goodman iteration and its well-documented instabilities \citep{takeda1986}, here corresponds to $\theta_{\mathrm{EE}} = 1$ and lands exactly on the attention update, which is stable for a structural reason made precise below; the coincidence is specific to the unit relaxation time in \eqref{eq:flow} and dissolves for any other time constant. Second, since the CS and ETD orbits coincide as sets, the distinction between the two schemes is invisible to any question that concerns only the equilibrium and the basin from which it is reached, and becomes meaningful only when the discrete trajectory is required to approximate the continuous one, a distinction we take up in the accuracy analysis of Section~\ref{sec:order}. Third, and as a matter of scope, the collapse of three schemes into one family is special to the energy \eqref{eq:energy}, whose convex part is exactly $\tfrac12\|x\|^2$ with linear gradient; for the generalized energies of the sparse and Fenchel--Young models \citep{hu2023sparse,santos2024hfy}, or for spherical models in which $E_{+}$ incorporates normalization constraints \citep{hoover2023}, the convex part is no longer quadratic, the implicit-in-$E_+$ step is no longer explicit, and the four schemes of Definition~\ref{def:schemes} are genuinely distinct. The present section should therefore be read as the base case of the theory, with the extensions treated in Section~\ref{sec:generalized}.

The reason the whole family $\theta \in (0,1]$, including the attention endpoint, dissipates the energy unconditionally is the following global majorization, which replaces the classical descent lemma and requires no Lipschitz constant.

\begin{lemma}[Unit-curvature majorization]\label{lem:majorization}
For all $x, y \in \mathbb{R}^d$,
\begin{equation}\label{eq:majorization}
E(y) \;\le\; Q_x(y) \;:=\; E(x) + \big\langle \nabla E(x),\, y - x \big\rangle + \tfrac12\,\|y - x\|^2,
\end{equation}
with equality at $y = x$. The majorizer $Q_x$ is a $1$-strongly convex quadratic whose unique minimizer is $x - \nabla E(x) = X p(x) = \Psi_1(x)$, the attention update of $x$.
\end{lemma}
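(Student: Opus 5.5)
The plan is to use the difference-of-convex splitting of Lemma~\ref{lem:structure}(a) directly, with no curvature bound on $E$ itself. Write $E = E_+ - E_-$ with $E_+(x)=\tfrac12\|x\|^2$ and $E_-(x)=\operatorname{lse}_\beta(X^\top x)$.

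The first step is the exact quadratic expansion of the convex part. Since $E_+$ is quadratic,
\[
E_+(y) = E_+(x) + \langle x,\, y-x\rangle + \tfrac12\|y-x\|^2 .
\]
The second step is the supporting-hyperplane inequality for the convex function $E_-$, whose gradient is $Xp(x)$:
\[
E_-(y) \ge E_-(x) + \langle Xp(x),\, y-x\rangle .
\]
Subtracting the second relation from the first gives
\[
E(y) \le E(x) + \langle x - Xp(x),\, y-x\rangle + \tfrac12\|y-x\|^2 .
\]
By \eqref{eq:gradient}, $x - Xp(x) = \nabla E(x)$, so the right-hand side is exactly $Q_x(y)$ and \eqref{eq:majorization} follows. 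The gap in the inequality is precisely the Bregman divergence $D_{E_-}(y,x)\ge 0$. Equality at $y=x$ is immediate because both sides reduce to $E(x)$.

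For the last claim, note that $Q_x$ has Hessian $I$, so it is a $1$-strongly convex quadratic. Setting its gradient $\nabla E(x) + (y-x)$ to zero gives the unique minimizer $y = x - \nabla E(x) = Xp(x)$. By \eqref{eq:family} with $\theta=1$, this point is $\Psi_1(x)$.

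There is no genuine obstacle in this argument. The only point worth stressing is that convexity of $E_-$ is used in place of a Lipschitz bound on $\nabla E$. That substitution is why the constant is uniform in $\beta$, in $N$ and in the patterns, even though the Lipschitz constant of $\nabla E$ from \eqref{eq:hessianbounds} grows with $\beta$. Alternatively, the result follows from the upper Hessian bound $\nabla^2E\preceq I$ via Taylor's theorem with integral remainder, but the DC route gives the Bregman form of the remainder, which is needed later in Theorem~\ref{thm:dissipation}.
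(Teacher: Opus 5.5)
Your proof is correct and matches the paper's own argument: the exact quadratic expansion of $E_+$ combined with the supporting-hyperplane inequality for the convex $E_-$ yields $Q_x$, and first-order optimality of this $1$-strongly convex quadratic identifies its minimizer as $x-\nabla E(x)=Xp(x)=\Psi_1(x)$. One small point on your closing remark: the proof of Theorem~\ref{thm:dissipation} needs only the inequality; the Bregman form of the gap is what the paper uses in the identity noted after that theorem and in Theorem~\ref{thm:bregman-descent}.
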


\begin{proof}
By convexity of $E_{-}$, $-E_{-}(y) \le -E_{-}(x) - \langle \nabla E_{-}(x), y-x\rangle$, and expanding $E_{+}(y) = \tfrac12\|y\|^2 = \tfrac12\|x\|^2 + \langle x, y-x\rangle + \tfrac12\|y-x\|^2$ gives \eqref{eq:majorization} after collecting terms. The minimizer of $Q_x$ is $y = x - \nabla E(x)$ by first-order optimality.
\end{proof}

Lemma~\ref{lem:majorization} states that the energy \eqref{eq:energy} admits a quadratic upper bound of curvature exactly one at every point, uniformly in $\beta$, $N$ and the pattern geometry, even though its curvature from below degrades linearly in $\beta$ by \eqref{eq:hessianbounds}. In optimization language, gradient descent on $E$ enjoys the descent guarantee of a $1$-smooth function despite $E$ not being $1$-smooth, because all the unbounded curvature sits in the concave part, which the linearization in \eqref{eq:majorization} majorizes for free. Every dissipation estimate in Section~\ref{sec:dissipation} is obtained by evaluating \eqref{eq:majorization} along the maps $\Psi_\theta$ and $\operatorname{prox}_{\Delta t E}$; in particular it yields, for all $\theta \in (0,2)$ and without any restriction on $\beta$, $X$ or the starting point, the per-step decrease $E(\Psi_\theta(x)) \le E(x) - \tfrac12\,\theta(2-\theta)\,\|\nabla E(x)\|^2$, whose proof and consequences, including the corresponding statements for the implicit scheme and the resulting a~priori iteration counts, are the subject of the next section.

% --- end of Section 2 ---

% Section 3 -- draft v1
% Assumes: amsmath, amsthm, natbib; environments theorem, lemma, proposition, corollary, remark numbered within section.
% Depends on Section 2: eq:energy, eq:gradient, eq:flow, eq:family, eq:ie, lem:structure, lem:majorization, prop:family.

\section{Energy dissipation and convergence of the basic schemes}\label{sec:dissipation}

This section establishes the results announced at the end of Section~\ref{sec:framework}: unconditional energy dissipation for the damped attention family $\Psi_\theta$ and for the implicit Euler scheme, with explicit per-step decrease and a priori stationarity rates (Section~\ref{sec:globaldiss}); convergence of the full sequence of iterates to a single critical point of $E$, a consequence of the real analyticity of the energy (Section~\ref{sec:iterates}); and local geometric convergence near well-separated patterns, with contraction factors that are explicit in the step size, the inverse temperature and the pattern separation (Section~\ref{sec:local}). Throughout, $E$, $X$, $p$, $M$, $\sigma_{\max}(X)$, $\nu$ and $\Psi_\theta$ are as in Section~\ref{sec:framework}.

\subsection{Unconditional dissipation and stationarity rates}\label{sec:globaldiss}

We first record that all schemes under consideration confine the iterates to an explicit ball, so that no growth condition needs to be imposed a posteriori.

\begin{lemma}[Absorbing ball]\label{lem:absorbing}
Let $R_\theta = \theta M / (1-|1-\theta|)$ for $\theta \in (0,2)$, so that $R_\theta = M$ for $\theta \in (0,1]$. Then $\|\Psi_\theta(x)\| \le |1-\theta|\,\|x\| + \theta M$ for all $x$, the ball $\bar{B}(0, \max\{\|x^0\|, R_\theta\})$ is positively invariant for $\Psi_\theta$, and $\limsup_{k} \|x^k\| \le R_\theta$ with geometric rate $|1-\theta|$. The same conclusions hold, with $R = M$, for any sequence of solutions of the implicit Euler relation \eqref{eq:ie}, under any selection of Definition~\ref{def:ie-objects}.
\end{lemma}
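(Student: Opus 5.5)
The plan rests on a single fact: $Xp(x)\in\operatorname{conv}\{\xi_1,\dots,\xi_N\}$, so $\|Xp(x)\|\le M$ for every $x$ (Lemma~\ref{lem:structure}(a)). For the relaxed family we write $\Psi_\theta(x)=(1-\theta)x+\theta Xp(x)$ and apply the triangle inequality to get $\|\Psi_\theta(x)\|\le|1-\theta|\,\|x\|+\theta M$. For $\theta\in(0,2)$ we have $q:=|1-\theta|<1$, and $R_\theta=\theta M/(1-q)$ is the fixed point of the affine map $r\mapsto qr+\theta M$. For $\theta\le1$ this gives $1-q=\theta$, so $R_\theta=M$.

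Positive invariance follows directly. If $\|x\|\le\rho:=\max\{\|x^0\|,R_\theta\}$, then
\[
\|\Psi_\theta(x)\|\le q\rho+\theta M=q\rho+(1-q)R_\theta\le\rho,
\]
since $R_\theta\le\rho$. For the limsup we set $r_k=\|x^k\|$. The affine recursion $r_{k+1}\le qr_k+(1-q)R_\theta$ gives $r_{k+1}-R_\theta\le q(r_k-R_\theta)$. Induction then yields $r_k\le R_\theta+q^k(r_0-R_\theta)_+$. This holds whatever the sign of $r_k-R_\theta$: if $r_k\le R_\theta$, the recursion already gives $r_{k+1}\le R_\theta$. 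The result is $\limsup r_k\le R_\theta$ with geometric rate $q$.

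For the implicit Euler relation, any solution $y$ of $y=x-\Delta t\,\nabla E(y)$ satisfies $y=x-\Delta t\,y+\Delta t\,Xp(y)$. Rearranged, this reads $y=\frac{1}{1+\Delta t}x+\frac{\Delta t}{1+\Delta t}Xp(y)$, a convex combination of $x$ and a point of norm at most $M$. Hence $\|y\|\le\frac{1}{1+\Delta t}\|x\|+\frac{\Delta t}{1+\Delta t}M$. This is the same affine recursion with $q=1/(1+\Delta t)<1$ and fixed point $M$. The bound uses only that $y$ solves the relation, not which solution is chosen, so it covers every selection of Definition~\ref{def:ie-objects}, including the global proximal minimizers and the local branch. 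The invariance and limsup arguments then repeat verbatim with $R=M$.

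Nothing here is a genuine obstacle. The one point needing care is to phrase the implicit case in terms of arbitrary solutions of the relation, so that multivaluedness outside the regime $\Delta t\,\nu<1$ causes no difficulty. A second, minor point is the sign bookkeeping in the geometric-rate statement, which the $(\cdot)_+$ formulation handles. Note also that "geometric rate $|1-\theta|$" in the implicit case should be read as rate $1/(1+\Delta t)$.
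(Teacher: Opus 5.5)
Your proposal is correct and follows the paper's proof: it uses $\|Xp(x)\|\le M$ from the convex-hull property, the triangle inequality for $\Psi_\theta$, and the rearranged relation $y=(x+\Delta t\,Xp(y))/(1+\Delta t)$ for the implicit case, which holds for every solution and hence for every selection. You also write out the invariance and limsup steps that the paper leaves implicit, and your reading of the implicit-case rate as $1/(1+\Delta t)$ is the right interpretation of ``the same conclusions.''
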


\begin{proof}
Since $Xp(x) \in \operatorname{conv}\{\xi_\mu\}$, $\|Xp(x)\| \le M$, and the bound for $\Psi_\theta$ follows from \eqref{eq:family} by the triangle inequality. For the implicit scheme, rearranging \eqref{eq:ie} for the energy \eqref{eq:energy} gives $x^{k+1} = \big(x^{k} + \Delta t\, Xp(x^{k+1})\big)/(1+\Delta t)$, whence $\|x^{k+1}\| \le (\|x^{k}\| + \Delta t\,M)/(1+\Delta t) \le \max\{\|x^{k}\|, M\}$; the computation uses only the rearranged form of the relation, hence holds for every selection.
\end{proof}

\begin{theorem}[Unconditional energy dissipation]\label{thm:dissipation}
Let $E$ be the energy \eqref{eq:energy}. Then the following hold for every $\beta > 0$, every pattern matrix $X$ and every starting point $x^0 \in \mathbb{R}^d$, with no restriction on the step size beyond the stated ranges.
\begin{enumerate}
\item[\textup{(a)}] For every $\theta \in (0,2)$, the iterates $x^{k+1} = \Psi_\theta(x^{k})$ satisfy
\begin{equation}\label{eq:descent-family}
E(x^{k+1}) \;\le\; E(x^{k}) \;-\; \tfrac12\,\theta\,(2-\theta)\,\big\|\nabla E(x^{k})\big\|^2 .
\end{equation}
In particular the attention update $\theta = 1$ dissipates at least $\tfrac12 \|\nabla E(x^{k})\|^2$ per step.
\item[\textup{(b)}] For every $\Delta t > 0$, any sequence with $x^{k+1}\in\mathcal{P}_{\Delta t}(x^k)$ satisfies \eqref{eq:ie} together with
\begin{equation}\label{eq:descent-ie}
E(x^{k+1}) \;\le\; E(x^{k}) \;-\; \tfrac{1}{2\Delta t}\,\big\|x^{k+1} - x^{k}\big\|^2 \;=\; E(x^{k}) \;-\; \tfrac{\Delta t}{2}\,\big\|\nabla E(x^{k+1})\big\|^2 .
\end{equation}
\end{enumerate}
In both cases the sequence $E(x^{k})$ is nonincreasing and converges, and $\nabla E(x^{k}) \to 0$.
\end{theorem}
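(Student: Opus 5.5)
The plan is to derive both parts directly from the unit-curvature majorization of Lemma~\ref{lem:majorization}, then deduce the convergence statements from coercivity.

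For part (a), I will apply \eqref{eq:majorization} with $y=\Psi_\theta(x^k)=x^k-\theta\nabla E(x^k)$. With $g=\nabla E(x^k)$ this gives
\[
E(x^{k+1})\le E(x^k)-\theta\|g\|^2+\tfrac12\theta^2\|g\|^2=E(x^k)-\tfrac12\theta(2-\theta)\|g\|^2 ,
\]
which is \eqref{eq:descent-family}. The coefficient is positive exactly for $\theta\in(0,2)$. Setting $\theta=1$ gives the stated $\tfrac12\|g\|^2$ decrease for attention. No restriction on $\beta$ or $X$ enters, because the majorant has curvature one uniformly.

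For part (b), I will use only the definition of $\mathcal{P}_{\Delta t}$. If $x^{k+1}$ minimizes $y\mapsto E(y)+\tfrac1{2\Delta t}\|y-x^k\|^2$, comparing with the candidate $y=x^k$ gives $E(x^{k+1})+\tfrac1{2\Delta t}\|x^{k+1}-x^k\|^2\le E(x^k)$. Since $E$ is smooth, first-order optimality at the minimizer gives $\nabla E(x^{k+1})+(x^{k+1}-x^k)/\Delta t=0$, which is \eqref{eq:ie}. Substituting $\|x^{k+1}-x^k\|=\Delta t\|\nabla E(x^{k+1})\|$ then yields the equality in \eqref{eq:descent-ie}. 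This argument holds for any selection and any $\Delta t>0$.

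For the final assertions, monotonicity of $E(x^k)$ follows from either inequality. $E$ is bounded below by the coercivity estimate of Lemma~\ref{lem:structure}(d), since $\tfrac12 r^2-Mr\ge -M^2/2$ gives $E\ge -M^2/2-\beta^{-1}\log N$. Hence $E(x^k)$ converges. Telescoping gives $\sum_k \tfrac12\theta(2-\theta)\|\nabla E(x^k)\|^2\le E(x^0)-\inf E<\infty$ in case (a), and $\sum_k\tfrac{\Delta t}2\|\nabla E(x^{k+1})\|^2<\infty$ in case (b). In both cases the summands tend to zero, so $\nabla E(x^k)\to0$; in (b) this holds after an index shift, which does not affect the limit.

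I do not expect a genuine obstacle here; the step needing the most care is part (b). There, \eqref{eq:descent-ie} must be obtained from global minimality rather than from the majorization, so that it holds for every global proximal selection even outside the uniqueness regime $\Delta t\,\nu<1$. For the same reason the relation \eqref{eq:ie} must be justified by the first-order condition, which is valid because $E$ is $C^1$.
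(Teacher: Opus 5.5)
Your proposal is correct and follows the paper's proof essentially step for step. For (a) you evaluate the unit-curvature majorization of Lemma~\ref{lem:majorization} at $y=\Psi_\theta(x^k)$. For (b) you compare against the competitor $y=x^k$ and use the first-order optimality condition, so it holds for every global proximal selection. For the limits you use the lower bound from Lemma~\ref{lem:structure}(d) and telescope the per-step decreases.
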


\begin{proof}
For (a), evaluate the majorization \eqref{eq:majorization} at $y = \Psi_\theta(x^{k}) = x^{k} - \theta \nabla E(x^{k})$: the linear term contributes $-\theta \|\nabla E(x^{k})\|^2$ and the quadratic term $\tfrac12 \theta^2 \|\nabla E(x^{k})\|^2$, giving \eqref{eq:descent-family}. For (b), minimality against the competitor $y = x^{k}$ gives $E(x^{k+1}) + \tfrac{1}{2\Delta t}\|x^{k+1}-x^{k}\|^2 \le E(x^{k})$, and the first-order optimality condition of the proximal functional is exactly \eqref{eq:ie}, which converts $\|x^{k+1}-x^{k}\| = \Delta t\, \|\nabla E(x^{k+1})\|$. In both cases $E(x^{k})$ is nonincreasing and bounded below by Lemma~\ref{lem:structure}(d), hence convergent, and summing the per-step decreases forces the gradient terms to vanish.
\end{proof}

The inequality in part (a) is in fact an identity up to the curvature of the concave part: writing $D_{-}(y,x) = E_{-}(y) - E_{-}(x) - \langle\nabla E_{-}(x),\, y-x\rangle \ge 0$ for the Bregman remainder of $E_{-} = \operatorname{lse}_\beta(X^{\top}\cdot\,)$, one has $E(x^{k}) - E(x^{k+1}) = \tfrac12\theta(2-\theta)\|\nabla E(x^{k})\|^2 + D_{-}(x^{k+1}, x^{k})$, a special case of the exact Bregman identity of Theorem~\ref{thm:bregman-descent}; the slack of the certified decrease is exactly the curvature of the log-sum-exp along the step, which is the structural explanation of the near-unit ratios observed in Campaign~A of Section~\ref{sec:experiments}.

We stress the division of labor in part (b): existence of the implicit step for arbitrary $\Delta t$ is guaranteed by coercivity through global minimization of the proximal functional, and dissipation holds for any such global minimizer, whereas uniqueness of the step requires the restriction $\Delta t\, \nu < 1$ of Section~\ref{sec:schemes}. This is the same division as in the classical discrete-time Hopfield setting, where the implicit Euler network of \citet{atencia2002} inherits the continuous Lyapunov function precisely below the inverse of the one-sided Lipschitz constant, and where the nonlinear measure analysis of \citet{villatoro2005} certifies uniqueness and exponential decay in the same regime; the novelty here is that for the energy \eqref{eq:energy} the dissipation inequality itself survives, in the set-valued sense, for every step size.

Summing \eqref{eq:descent-family} and \eqref{eq:descent-ie} and using the explicit bounds of Lemma~\ref{lem:structure}(d) yields fully explicit a priori stationarity rates.

\begin{corollary}[A priori stationarity rates]\label{cor:rates}
Let $G_0 = E(x^0) - \inf E$. Then $\inf E \ge -\tfrac12 M^2 - \beta^{-1}\log N$ and $E(x^0) \le \tfrac12\|x^0\|^2 + M\|x^0\|$, so that $G_0 \le \tfrac12\big(\|x^0\| + M\big)^2 + \beta^{-1}\log N$, and for every $K \ge 1$,
\[
\min_{0 \le k < K} \big\|\nabla E(x^{k})\big\|^2 \;\le\; \frac{2\,G_0}{\theta(2-\theta)\,K}
\quad\text{for } x^{k+1} = \Psi_\theta(x^{k}),
\]
\[
\min_{1 \le k \le K} \big\|\nabla E(x^{k})\big\|^2 \;\le\; \frac{2\,G_0}{\Delta t\, K}
\quad\text{for any sequence }x^{k+1}\in\mathcal{P}_{\Delta t}(x^k).
\]
\end{corollary}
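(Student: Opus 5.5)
The plan is to obtain the two explicit bounds on $G_0$ directly from Lemma~\ref{lem:structure}(d) and its proof, and then to telescope the per-step decreases of Theorem~\ref{thm:dissipation}.

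\emph{Bound on $\inf E$.} Minimizing the coercivity lower bound $\tfrac12 r^2 - Mr - \beta^{-1}\log N$ over $r = \|x\| \ge 0$ gives the value at $r = M$, namely $-\tfrac12 M^2 - \beta^{-1}\log N$. This is a lower bound for $\inf E$.

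\emph{Bound on $E(x^0)$.} Since $\operatorname{lse}_\beta(z) \ge \max_\mu z_\mu$ and $\max_\mu \xi_\mu^\top x \ge -M\|x\|$, we get $-\operatorname{lse}_\beta(X^\top x^0) \le M\|x^0\|$. Hence $E(x^0) \le \tfrac12\|x^0\|^2 + M\|x^0\|$. Subtracting the lower bound on $\inf E$ gives
\[
G_0 \le \tfrac12\|x^0\|^2 + M\|x^0\| + \tfrac12 M^2 + \beta^{-1}\log N = \tfrac12(\|x^0\|+M)^2 + \beta^{-1}\log N .
\]

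\emph{Rate for $\Psi_\theta$.} Sum \eqref{eq:descent-family} over $k = 0,\dots,K-1$:
\[
\tfrac12\theta(2-\theta)\sum_{k<K}\|\nabla E(x^k)\|^2 \le E(x^0) - E(x^K) \le G_0 .
\]
Bounding the sum from below by $K$ times its minimum term gives the first rate.

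\emph{Rate for the proximal sequence.} Here \eqref{eq:descent-ie} charges the gradient at the new iterate. Summing over $k = 0,\dots,K-1$ therefore controls $\tfrac{\Delta t}{2}\sum_{k=1}^{K}\|\nabla E(x^k)\|^2 \le G_0$, which explains the index range $1 \le k \le K$ in the statement. The same minimum-versus-sum argument then gives the second rate.

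No step here is a real obstacle. The only points needing care are the sign estimate $\max_\mu \xi_\mu^\top x \ge -M\|x\|$, which uses only one pattern, and keeping the index shift in the implicit case.
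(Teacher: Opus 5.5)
Your proposal is correct and takes essentially the same route as the paper. The paper also obtains the corollary by telescoping the per-step decreases of Theorem~\ref{thm:dissipation} and bounding $G_0$ through the elementary log-sum-exp estimates behind Lemma~\ref{lem:structure}(d), with the index shift $1 \le k \le K$ coming from the gradient being evaluated at the new iterate in the implicit scheme.
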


The constant $\theta(2-\theta)$ is maximized at $\theta = 1$; within the damped attention family, and by this criterion, the attention update is the optimal member, and underrelaxation can only slow the worst-case decrease. Locally the picture is finer; by Theorem~\ref{thm:overrelax} the certified contraction factor near a well-separated attractor is minimized not at $\theta = 1$ but at the slight overrelaxation $\theta_\star = 2/(2-\ell_\mu(r))$. The implicit rate improves without bound as $\Delta t$ grows, consistently with the observation, made precise in Section~\ref{sec:local}, that the $\Delta t \to \infty$ limit of the implicit scheme is not the attention update but the exact equilibrium computation.

\subsection{Convergence of the iterates}\label{sec:iterates}

Theorem~\ref{thm:dissipation} yields $\nabla E(x^{k}) \to 0$ and, with Lemma~\ref{lem:absorbing}, that the set of accumulation points is a nonempty connected subset of the critical set of $E$. For a nonconvex energy this does not by itself exclude drift along a continuum of critical points. The energy \eqref{eq:energy} is, however, real analytic on $\mathbb{R}^d$, being the difference of a quadratic and the composition of $\operatorname{lse}_\beta$ with a linear map, and therefore satisfies the {\L}ojasiewicz gradient inequality at every point. Convergence of the full sequence then follows from the general theory of descent methods for analytic cost functions.

\begin{theorem}[Convergence to a single critical point]\label{thm:single-limit}
For every $\theta \in (0,2)$ and every $x^0$, the sequence $x^{k+1} = \Psi_\theta(x^{k})$ converges to a single critical point $x^\ast$ of $E$. The same holds for every sequence $x^{k+1}\in\mathcal{P}_{\Delta t}(x^k)$ of Theorem~\textup{\ref{thm:dissipation}(b)} for every $\Delta t > 0$.
\end{theorem}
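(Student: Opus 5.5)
We verify the two hypotheses of the Attouch--Bolte--Svaiter / Absil--Mahony--Andrews framework for descent methods on analytic functions: a \emph{sufficient decrease} condition $E(x^k)-E(x^{k+1}) \ge a\,\|x^{k+1}-x^k\|^2$ and a \emph{relative error} condition $\|\nabla E(x^{k+1})\| \le b\,\|x^{k+1}-x^k\|$ (or, in the form of Absil et al., $E(x^k)-E(x^{k+1}) \ge a\,\|\nabla E(x^k)\|\,\|x^{k+1}-x^k\|$). Together with boundedness of the orbit and the {\L}ojasiewicz inequality, these give finite length, $\sum_k\|x^{k+1}-x^k\|<\infty$, and hence convergence to a single critical point. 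Boundedness comes from Lemma~\ref{lem:absorbing}. Real analyticity of $E$ holds because it is a quadratic minus a log of a positive sum of exponentials of linear forms. By Lemma~\ref{lem:absorbing} the iterates lie in a compact ball, so the $\omega$-limit set $\Omega$ is nonempty and compact. By Theorem~\ref{thm:dissipation} it consists of critical points on which $E$ takes the constant value $E_\infty=\lim E(x^k)$.

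For $\Psi_\theta$ we use $x^{k+1}-x^k=-\theta\nabla E(x^k)$. Then \eqref{eq:descent-family} reads $E(x^k)-E(x^{k+1})\ge \tfrac{2-\theta}{2}\,\|\nabla E(x^k)\|\,\|x^{k+1}-x^k\|$ with a positive constant on $(0,2)$. This is exactly the strong-descent condition of Absil--Mahony--Andrews, and their theorem applies directly. For the implicit scheme, \eqref{eq:descent-ie} gives sufficient decrease with $a=1/(2\Delta t)$. The relation \eqref{eq:ie} gives $\nabla E(x^{k+1})=-(x^{k+1}-x^k)/\Delta t$, so the relative error condition holds with $b=1/\Delta t$ at the \emph{new} iterate. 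This is the Attouch--Bolte--Svaiter setting, and it holds for every selection, since both inequalities were proved for arbitrary global proximal minimizers.

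For completeness, the core estimate can be written out directly rather than cited. By compactness of $\Omega$ and a standard covering argument, there is a uniform {\L}ojasiewicz inequality $|E(x)-E_\infty|^{1-\alpha}\le C\|\nabla E(x)\|$ on a neighbourhood $U$ of $\Omega$ at level near $E_\infty$, with $\alpha\in(0,1/2]$. For $k$ large, $x^k\in U$. Concavity of $s\mapsto s^{\alpha}$ then gives
\[
\big(E(x^k)-E_\infty\big)^{\alpha}-\big(E(x^{k+1})-E_\infty\big)^{\alpha}\ \ge\ \alpha\,\frac{E(x^k)-E(x^{k+1})}{(E(x^k)-E_\infty)^{1-\alpha}}\ \ge\ c\,\|x^{k+1}-x^k\|.
\]
For $\Psi_\theta$ this uses the strong-descent form directly. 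For the proximal case one uses the index-shifted version with $\|\nabla E(x^k)\|\le\|x^k-x^{k-1}\|/\Delta t$ and the inequality $2\sqrt{uv}\le u+v$. Telescoping yields finite length and hence convergence to some $x^\ast\in\Omega$, which is critical.

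The main obstacle is the bookkeeping that ensures the iterates actually remain in $U$. Since $\Omega$ may a priori be a continuum, we avoid a single-point {\L}ojasiewicz neighbourhood and use the uniformized version on the compact set $\Omega$. It is also worth noting that the trivial case $E(x^k)=E_\infty$ for some finite $k$ forces $x^{k+1}=x^k$ by sufficient decrease, so the sequence is eventually constant.
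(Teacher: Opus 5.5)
Your proposal is correct and follows the paper's proof essentially verbatim. Boundedness comes from Lemma~\ref{lem:absorbing}. For $\Psi_\theta$, the strong descent condition with constant $(2-\theta)/2$ feeds into Absil--Mahony--Andrews. For the proximal selections, sufficient decrease and relative error with $a=1/(2\Delta t)$, $b=1/\Delta t$ feed into Attouch--Bolte--Svaiter. Your explicit uniformized {\L}ojasiewicz argument, including the eventually-constant case that supplies Absil's complementary condition, correctly unpacks what the paper cites rather than taking a different route.
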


\begin{proof}
The iterates are bounded by Lemma~\ref{lem:absorbing}. For the family, the step satisfies the identity $x^{k+1} - x^{k} = -\theta\, \nabla E(x^{k})$, so \eqref{eq:descent-family} gives the strong descent condition $E(x^{k}) - E(x^{k+1}) \ge \tfrac{2-\theta}{2}\, \|\nabla E(x^{k})\|\,\|x^{k+1}-x^{k}\|$, and $x^{k+1} = x^{k}$ occurs only at critical points; these are the hypotheses of the convergence theorem for descent methods on real analytic functions of \citet{absil2005}. For the implicit scheme, \eqref{eq:descent-ie} is the sufficient decrease condition and the optimality condition $\nabla E(x^{k+1}) = -(x^{k+1}-x^{k})/\Delta t$ is the relative error condition, with constants $a = 1/(2\Delta t)$ and $b = 1/\Delta t$, of the abstract convergence framework of \citet{attouch2013} for Kurdyka--{\L}ojasiewicz functions, a class that contains all real analytic functions. For the local branch on a retrieval ball, convergence is immediate from the contraction of Theorem~\ref{thm:local-ie} and requires no {\L}ojasiewicz argument.
\end{proof}

\begin{remark}
{\L}ojasiewicz exponents also convert Theorem~\ref{thm:single-limit} into local convergence rates, geometric or algebraic according to the exponent at the limit point. We do not pursue this route because near the physically relevant limit points, namely the retrieval attractors associated with well-separated patterns, the energy is strongly convex by the estimates of the next subsection, the exponent is $\tfrac12$, and sharper and fully explicit geometric rates are available directly.
\end{remark}

\subsection{Local geometric convergence near well-separated patterns}\label{sec:local}

We now quantify the metastable regime. Following \citet{ramsauer2021}, the relevant geometric quantity is the separation of a pattern from its competitors,
\begin{equation}\label{eq:separation}
\Delta_\mu \;=\; \min_{\nu \ne \mu}\, \big(\xi_\mu - \xi_\nu\big)^{\top} \xi_\mu ,
\end{equation}
and the margin $\Delta_\mu$ in \eqref{eq:separation} controls the exponential concentration of retrieval. Retrieval statements hold on balls $\bar{B}(\xi_\mu, r)$ on which the softmax concentrates on the $\mu$-th coordinate. The next lemma makes the concentration quantitative in the two forms needed below, for the value of the map and for its differential.

\begin{lemma}[Concentration on a retrieval ball]\label{lem:concentration}
Let $r > 0$ satisfy $\Delta_\mu > 2Mr$ and set
\[
\varepsilon_\mu(r) \;=\; (N-1)\, e^{-\beta\,(\Delta_\mu - 2Mr)} .
\]
Then for every $x \in \bar{B}(\xi_\mu, r)$: \textup{(i)} $1 - p_\mu(x) \le \varepsilon_\mu(r)$; \textup{(ii)} $\|X p(x) - \xi_\mu\| \le 2M\,\varepsilon_\mu(r)$; \textup{(iii)} $\big\|\nabla^2 E(x) - I\big\| = \beta\,\big\|X H(p(x)) X^{\top}\big\| \le \ell_\mu(r) := 2\,\beta\,\sigma_{\max}^2(X)\,\varepsilon_\mu(r)$, where $H(p) = \operatorname{diag}(p) - pp^{\top}$ as in Lemma~\textup{\ref{lem:structure}}.
\end{lemma}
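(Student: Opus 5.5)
The plan is to bound the logit gaps on the ball, then convert the resulting bound on $1-p_\mu$ into the value and Hessian estimates.

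\textbf{Step 1 (logit gaps).} Fix $x\in\bar B(\xi_\mu,r)$ and write $x=\xi_\mu+h$ with $\|h\|\le r$. For every $\nu\ne\mu$,
\[
(\xi_\mu-\xi_\nu)^\top x=(\xi_\mu-\xi_\nu)^\top\xi_\mu+(\xi_\mu-\xi_\nu)^\top h\ \ge\ \Delta_\mu-\|\xi_\mu-\xi_\nu\|\,r\ \ge\ \Delta_\mu-2Mr .
\]
This uses \eqref{eq:separation} and $\|\xi_\mu-\xi_\nu\|\le 2M$. The right-hand side is positive by hypothesis.

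\textbf{Step 2 (part (i)).} By Step 1, $p_\nu(x)/p_\mu(x)=e^{-\beta(\xi_\mu-\xi_\nu)^\top x}\le e^{-\beta(\Delta_\mu-2Mr)}$. Hence
\[
1-p_\mu=\sum_{\nu\ne\mu}p_\nu\le p_\mu\sum_{\nu\ne\mu}p_\nu/p_\mu\le(N-1)e^{-\beta(\Delta_\mu-2Mr)}=\varepsilon_\mu(r).
\]

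\textbf{Step 3 (part (ii)).} Since $\sum_\nu p_\nu=1$, we can write $Xp-\xi_\mu=\sum_{\nu\ne\mu}p_\nu(\xi_\nu-\xi_\mu)$. Its norm is therefore at most $2M\sum_{\nu\ne\mu}p_\nu=2M(1-p_\mu)\le 2M\varepsilon_\mu$.

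\textbf{Step 4 (part (iii)).} The identity $\nabla^2E-I=-\beta XH(p)X^\top$ comes from the proof of Lemma~\ref{lem:structure}(b). Then $\|XH(p)X^\top\|\le\sigma_{\max}^2(X)\,\|H(p)\|$, so it suffices to show $\|H(p)\|\le 2(1-p_\mu)$. Since $H(p)\succeq0$, its operator norm is bounded by its trace or by any induced norm. I will use the $\ell_\infty$ row-sum bound, which is valid for symmetric matrices because $\|H\|_2\le\|H\|_\infty$.
\begin{itemize}
\item Row $\mu$ has absolute sum $p_\mu(1-p_\mu)+\sum_{\nu\ne\mu}p_\mu p_\nu=2p_\mu(1-p_\mu)$.
\item Row $\nu\ne\mu$ has absolute sum $p_\nu(1-p_\nu)+p_\nu(1-p_\nu)=2p_\nu(1-p_\nu)\le 2p_\nu\le 2(1-p_\mu)$.
\end{itemize}
Every row sum is thus at most $2(1-p_\mu)\le 2\varepsilon_\mu$. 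This gives $\beta\|XH(p)X^\top\|\le 2\beta\sigma_{\max}^2(X)\varepsilon_\mu=\ell_\mu(r)$.

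A cruder route also works: $\|H(p)\|\le\operatorname{tr}H(p)=1-\sum_\nu p_\nu^2\le 1-p_\mu^2\le 2(1-p_\mu)$. I will use whichever is cleaner.

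The main obstacle is Step 4, specifically getting the factor $2$ without picking up a dependence on $N$. The trace bound $1-p_\mu^2\le 2(1-p_\mu)$ settles this, since $1-p_\mu^2=(1-p_\mu)(1+p_\mu)$ and $1+p_\mu\le2$. Everything else is routine algebra.
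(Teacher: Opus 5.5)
Your proof is correct. Parts (i) and (ii) follow the paper's argument: the logit gap is bounded below by $\Delta_\mu-2Mr$ via Cauchy--Schwarz, then $p_\nu\le p_\nu/p_\mu$ is summed over $\nu\ne\mu$, and $Xp-\xi_\mu=\sum_{\nu\ne\mu}p_\nu(\xi_\nu-\xi_\mu)$.

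The only divergence is in (iii), in how $\|H(p)\|$ is bounded. You use the $\ell_\infty$ row-sum norm or, more simply, $\|H(p)\|\le\operatorname{tr}H(p)=1-\|p\|^2\le 1-p_\mu^2$, which relies on $H(p)\succeq0$. The paper instead bounds the quadratic form directly:
\[
v^\top H(p)v=\operatorname{Var}_p(v)\le\mathbb{E}_p\big[(v-v_\mu)^2\big]=\sum_{\nu\ne\mu}p_\nu(v_\nu-v_\mu)^2\le 2(1-p_\mu)\|v\|^2 .
\]
This uses that the variance is the least second moment about any point, together with $(v_\nu-v_\mu)^2\le 2v_\nu^2+2v_\mu^2$.

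All three routes give the same $N$-free constant $2(1-p_\mu)$. Your trace version is the shortest and yields the marginally sharper $1-\|p\|^2$. The paper's variance argument never needs the entries of $H$ and mirrors the Popoviciu step in Lemma~\ref{lem:structure}(b). Your row-sum computation is also already free of any dependence on $N$, so the trace bound is not what ``settles'' that point; it is just a shorter alternative.
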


\begin{proof}
For (i), for $\nu \ne \mu$ and $x \in \bar{B}(\xi_\mu, r)$, $(\xi_\nu - \xi_\mu)^{\top}x = (\xi_\nu - \xi_\mu)^{\top}\xi_\mu + (\xi_\nu - \xi_\mu)^{\top}(x - \xi_\mu) \le -\Delta_\mu + 2Mr$, hence $p_\nu(x) \le p_\nu(x)/p_\mu(x) = e^{\beta(\xi_\nu-\xi_\mu)^{\top}x} \le e^{-\beta(\Delta_\mu - 2Mr)}$, and summing over $\nu \ne \mu$ gives (i). For (ii), $Xp(x) - \xi_\mu = \sum_{\nu} p_\nu (\xi_\nu - \xi_\mu)$ and $\|\xi_\nu - \xi_\mu\| \le 2M$, so the norm is at most $2M(1-p_\mu)$. For (iii), for any unit vector $v \in \mathbb{R}^N$, $v^{\top} H(p) v = \operatorname{Var}_p(v) \le \mathbb{E}_p\big[(v - v_\mu)^2\big] = \sum_{\nu \ne \mu} p_\nu (v_\nu - v_\mu)^2 \le 2\,(1-p_\mu)\,\|v\|^2$, using $(v_\nu - v_\mu)^2 \le 2 v_\nu^2 + 2 v_\mu^2$; hence $\|H(p)\| \le 2(1-p_\mu)$ and $\beta\|X H X^{\top}\| \le 2\beta\sigma_{\max}^2(X)(1-p_\mu)$.
\end{proof}

The quantity $\ell_\mu(r)$ is exponentially small in $\beta(\Delta_\mu - 2Mr)$ and controls everything; it bounds the deviation of the Hessian from the identity, hence both the local strong convexity of $E$ and the Lipschitz constant of $x \mapsto Xp(x)$ on the ball. The standing assumption of the two theorems below is
\begin{equation}\label{eq:retrieval-cond}
2M\,\varepsilon_\mu(r) \;<\; r
\qquad\text{and}\qquad
\ell_\mu(r) \;<\; 1 ,
\end{equation}
a well-separation condition of the same nature as those of \citet{ramsauer2021} and \citet{hu2023sparse}, satisfied for fixed $r$ once $\beta \Delta_\mu$ is moderately large.

\begin{theorem}[Local contraction of the damped attention family]\label{thm:local-family}
Assume \eqref{eq:retrieval-cond} and let $\theta \in (0,1]$. Then \textup{(a)} $\Psi_\theta$ maps $\bar{B}(\xi_\mu, r)$ into itself; \textup{(b)} $\Psi_\theta$ is a contraction on $\bar{B}(\xi_\mu, r)$ with factor
\[
q_\theta \;=\; 1 - \theta\,\big(1 - \ell_\mu(r)\big) \;<\; 1 ,
\]
so there is a unique fixed point $x^\ast_\mu \in \bar{B}(\xi_\mu, r)$, which is a critical point of $E$ and satisfies $\|x^\ast_\mu - \xi_\mu\| \le 2M\varepsilon_\mu(r)$, and $\|x^{k} - x^\ast_\mu\| \le q_\theta^{\,k}\, \|x^{0} - x^\ast_\mu\|$ for every $x^0$ in the ball; \textup{(c)} $E$ is strongly convex on $\bar{B}(\xi_\mu, r)$ with modulus $1 - \ell_\mu(r)$, so $x^\ast_\mu$ is the unique local minimizer of $E$ there and is exponentially stable for the flow \eqref{eq:flow} with rate $1 - \ell_\mu(r)$.
\end{theorem}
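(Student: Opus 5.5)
The plan is to derive everything from Lemma~\ref{lem:concentration}, which controls both the value and the differential of $x\mapsto Xp(x)$ on $B:=\bar B(\xi_\mu,r)$. The order is: contraction first, since it needs only convexity of the ball; then self-mapping; then Banach's fixed point theorem; then the convexity and flow statements.

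\emph{Contraction.} The Jacobian of $\Psi_\theta$ is
\[
D\Psi_\theta(x) = I - \theta\nabla^2E(x) = (1-\theta)I + \theta\,\beta X H(p(x))X^{\top}.
\]
This matrix is symmetric. By Lemma~\ref{lem:concentration}(iii) its norm on $B$ is at most $(1-\theta) + \theta\ell_\mu(r)$, where $1-\theta\ge 0$ is used for $\theta\in(0,1]$. That bound is exactly $q_\theta = 1-\theta(1-\ell_\mu(r))$. Because $B$ is convex, the mean value inequality along segments gives $\|\Psi_\theta(x)-\Psi_\theta(y)\|\le q_\theta\|x-y\|$ for all $x,y\in B$, and $q_\theta<1$ follows from $\ell_\mu(r)<1$.

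\emph{Invariance.} Write $\Psi_\theta(x)-\xi_\mu = (1-\theta)(x-\xi_\mu) + \theta\,(Xp(x)-\xi_\mu)$. By Lemma~\ref{lem:concentration}(ii),
\[
\|\Psi_\theta(x)-\xi_\mu\| \le (1-\theta)r + \theta\,2M\varepsilon_\mu(r) < (1-\theta)r + \theta r = r,
\]
using the first half of \eqref{eq:retrieval-cond}. Banach's theorem then gives a unique fixed point $x^\ast_\mu\in B$ and the geometric estimate $\|x^k-x^\ast_\mu\|\le q_\theta^k\|x^0-x^\ast_\mu\|$. By Proposition~\ref{prop:family} the fixed point is a critical point of $E$. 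The bound $\|x^\ast_\mu-\xi_\mu\| = \|Xp(x^\ast_\mu)-\xi_\mu\|\le 2M\varepsilon_\mu(r)$ is again Lemma~\ref{lem:concentration}(ii), since $x^\ast_\mu = Xp(x^\ast_\mu)$. Note that $x^\ast_\mu$ does not depend on $\theta$: every fixed point in $B$ is a critical point in $B$, and uniqueness holds for each $\theta$.

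\emph{Convexity and the flow.} Lemma~\ref{lem:concentration}(iii) gives $\nabla^2E(x)\succeq(1-\ell_\mu(r))I$ on the convex set $B$, so $E$ is strongly convex there with modulus $1-\ell_\mu(r)$. Hence $x^\ast_\mu$ is the unique critical point, and thus the unique local minimizer, in $B$. For the flow, the forward invariance of $B$ is the step I expect to be the main obstacle. The cleanest route is to write the flow as $\dot x = -x + Xp(x)$ and regard it as the $\theta\to0$ limit of $\Psi_\theta$, or to compute directly
\[
\tfrac{d}{dt}\tfrac12\|x-\xi_\mu\|^2 = -\|x-\xi_\mu\|^2 + \langle x-\xi_\mu,\,Xp(x)-\xi_\mu\rangle \le -\|x-\xi_\mu\|\,\bigl(\|x-\xi_\mu\| - 2M\varepsilon_\mu(r)\bigr).
\]
On the boundary sphere this is negative, so $B$ is forward invariant. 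Inside $B$, strong monotonicity gives
\[
\tfrac{d}{dt}\tfrac12\|x-x^\ast_\mu\|^2 = -\langle\nabla E(x)-\nabla E(x^\ast_\mu),\,x-x^\ast_\mu\rangle \le -(1-\ell_\mu(r))\|x-x^\ast_\mu\|^2.
\]
Grönwall's inequality then yields exponential convergence with rate $1-\ell_\mu(r)$.
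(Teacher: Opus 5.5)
Your proof is correct and follows the paper's argument: Lemma~\ref{lem:concentration}(ii) gives invariance and the bound on $\|x^\ast_\mu-\xi_\mu\|$; Lemma~\ref{lem:concentration}(iii) with the mean value inequality on the convex ball gives the contraction; Banach's theorem and Proposition~\ref{prop:family} give the critical fixed point; and (iii) again gives strong convexity. Your forward-invariance computation for the flow and the Gr\"onwall estimate spell out the exponential-stability claim in (c), which the paper's proof only asserts as a restatement of the Hessian bound.
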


\begin{proof}
For (a), $\|\Psi_\theta(x) - \xi_\mu\| \le (1-\theta)\|x - \xi_\mu\| + \theta\, \|Xp(x) - \xi_\mu\| \le (1-\theta)r + 2\theta M \varepsilon_\mu(r) \le r$ by Lemma~\ref{lem:concentration}(ii) and \eqref{eq:retrieval-cond}. For (b), $D\Psi_\theta(x) = (1-\theta) I + \theta\, \beta X H(p(x)) X^{\top}$ is symmetric positive semidefinite with $\|D\Psi_\theta(x)\| \le (1-\theta) + \theta\, \ell_\mu(r) = q_\theta$ on the ball by Lemma~\ref{lem:concentration}(iii), and the ball is convex, so the mean value inequality applies; Banach's theorem gives the unique fixed point, which satisfies $x^\ast_\mu = Xp(x^\ast_\mu)$ by Proposition~\ref{prop:family} and hence $\|x^\ast_\mu - \xi_\mu\| \le 2M\varepsilon_\mu(r)$ by Lemma~\ref{lem:concentration}(ii) again. Part (c) restates Lemma~\ref{lem:concentration}(iii) as $\nabla^2 E \succeq (1-\ell_\mu(r)) I$ on the ball.
\end{proof}

For $\theta = 1$ the factor degenerates to $q_1 = \ell_\mu(r)$, which is exponentially small: a single attention step contracts the ball around the attractor by an exponentially small factor, and lands within $2M\varepsilon_\mu(r)$ of the stored pattern from anywhere in the ball. This recovers, in fixed point language and with an explicit contraction rate, the one-step retrieval phenomenon of \citet{ramsauer2021}; the additional information carried by Theorem~\ref{thm:local-family} is the entire curve $\theta \mapsto q_\theta$, which shows that among underrelaxations $\theta \le 1$ the attention endpoint is locally optimal and that underrelaxation degrades the local rate linearly, in parallel with the global constant of Corollary~\ref{cor:rates}. The restriction to $\theta \le 1$ is, however, not forced by the local theory, and removing it changes the answer.

\begin{theorem}[Overrelaxed retrieval: the full local window]\label{thm:overrelax}
Assume \eqref{eq:retrieval-cond}, write $\ell = \ell_\mu(r)$ and $\rho = r - 2M\varepsilon_\mu(r) > 0$, and let $x^\ast_\mu$ be the attractor of Theorem~\ref{thm:local-family}. Then for every $\theta \in (0,2)$ the map $\Psi_\theta$ leaves $\bar{B}(x^\ast_\mu, \rho)$ invariant and contracts on it with certified factor
\[
q_\theta \;=\; \max\big\{\, |1-\theta|,\ |1 - \theta(1-\ell)| \,\big\} \;<\; 1,
\]
which for $\theta \in (0,1]$ reduces to the factor $1 - \theta(1-\ell)$ of Theorem~\ref{thm:local-family}. Over the window, $q_\theta$ is minimized at
\[
\theta_\star \;=\; \frac{2}{2-\ell}, \qquad q_{\theta_\star} \;=\; \frac{\ell}{2-\ell} \;<\; \ell \;=\; q_1 :
\]
a certified slight overrelaxation strictly improves on the attention update, by the factor $1/(2-\ell) \in (\tfrac12, 1)$. Every $\theta \in (0,2)$ lies simultaneously in the dissipation window of Theorem~\ref{thm:dissipation} and in the cell window of Theorem~\ref{thm:cells}, so overrelaxed retrieval retains all global guarantees.
\end{theorem}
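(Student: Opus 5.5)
The plan is to run the argument of Theorem~\ref{thm:local-family} again, with two changes: use a spectral bound on $D\Psi_\theta$ instead of a norm bound, and centre the ball at $x^\ast_\mu$ rather than at $\xi_\mu$.

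\emph{Inclusion.} By Theorem~\ref{thm:local-family}, $\|x^\ast_\mu-\xi_\mu\|\le 2M\varepsilon_\mu(r)$. Hence $\bar B(x^\ast_\mu,\rho)\subseteq\bar B(\xi_\mu,r)$ by the triangle inequality, since $\rho+2M\varepsilon_\mu(r)=r$. So Lemma~\ref{lem:concentration}(iii) applies at every point of the smaller ball.

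\emph{Spectral bound on the differential.} On that ball, $D\Psi_\theta(x)=I-\theta\nabla^2E(x)$. Here $\nabla^2E(x)$ is symmetric, and $\nabla^2 E(x)=I-\beta XH(p)X^\top$ with $0\preceq \beta XH(p)X^\top\preceq \ell I$. So its spectrum lies in $[1-\ell,1]$. Consequently the spectrum of $D\Psi_\theta(x)$ lies in $[1-\theta,\,1-\theta(1-\ell)]$. For a symmetric matrix the operator norm is the largest absolute eigenvalue, which gives
\[
\|D\Psi_\theta(x)\|\le\max\{|1-\theta|,\,|1-\theta(1-\ell)|\}=q_\theta .
\]
Then check $q_\theta<1$ for $\theta\in(0,2)$. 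First, $|1-\theta|<1$ trivially. Second, $0<\theta(1-\ell)<2$ because $\ell<1$ by \eqref{eq:retrieval-cond}, so $|1-\theta(1-\ell)|<1$. For $\theta\le1$ both quantities $1-\theta$ and $1-\theta(1-\ell)$ are nonnegative, and the larger one is $1-\theta(1-\ell)$, which recovers the factor of Theorem~\ref{thm:local-family}.

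\emph{Invariance and contraction.} The ball is convex, so the mean value inequality gives $\|\Psi_\theta(x)-\Psi_\theta(y)\|\le q_\theta\|x-y\|$ on it. Since $x^\ast_\mu$ is a fixed point of $\Psi_\theta$ (Proposition~\ref{prop:family}), we get $\|\Psi_\theta(x)-x^\ast_\mu\|=\|\Psi_\theta(x)-\Psi_\theta(x^\ast_\mu)\|\le q_\theta\rho\le\rho$. This gives invariance and contraction together. This is the step that forced the change of centre: the direct argument of Theorem~\ref{thm:local-family}(a) around $\xi_\mu$ uses $1-\theta\ge0$ and fails for $\theta>1$, whereas centring at the fixed point makes invariance a consequence of contraction.

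\emph{Optimization of $q_\theta$.} The term $|1-\theta|$ decreases on $(0,1]$ and increases afterwards. The term $1-\theta(1-\ell)$ decreases linearly in $\theta$. For $\theta\le1$ the second term dominates, so the minimum lies at $\theta\ge1$. There the maximum of $\theta-1$ (increasing) and $|1-\theta(1-\ell)|$ is minimized where $\theta-1=1-\theta(1-\ell)$, i.e.\ at $\theta_\star=2/(2-\ell)$. At this point the right-hand side is still nonnegative, because $\theta_\star(1-\ell)=2(1-\ell)/(2-\ell)<1$. This gives $q_{\theta_\star}=\ell/(2-\ell)$. Since $2-\ell>1$, we have $q_{\theta_\star}<\ell=q_1$, and the ratio $q_{\theta_\star}/q_1=1/(2-\ell)\in(\tfrac12,1)$.

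\emph{Global guarantees.} The final sentence of the statement is immediate: Theorem~\ref{thm:dissipation} holds on $(0,2)$, and Theorem~\ref{thm:cells} by its statement covers $\theta\in(0,2)$.

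The only delicate point is the piecewise minimization. One has to check that at $\theta_\star$ the active branch is $1-\theta(1-\ell)\ge0$ and not its negative. I would verify this by evaluating at $\theta_\star$ directly, as done above.
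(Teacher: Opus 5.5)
Your proposal is correct and follows essentially the same route as the paper's proof. Both use the inclusion $\bar B(x^\ast_\mu,\rho)\subseteq\bar B(\xi_\mu,r)$, the spectral enclosure $[1-\theta,\,1-\theta(1-\ell)]$ of the symmetric differential, the mean value inequality centred at the fixed point to get contraction and invariance together, and the piecewise-linear minimax at $\theta_\star=2/(2-\ell)$. One tacit point, which the paper also leaves implicit: the strict inequality $q_{\theta_\star}<\ell$ and the open interval $(\tfrac12,1)$ need $\ell>0$, and this holds whenever $N\ge2$.
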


The preservation statement is not confined to the quadratic modern-Hopfield geometry; Section~\ref{sec:generalized} returns to overrelaxation in dual coordinates, and Corollary~\ref{cor:cells-overrelax} shows that the same energy-cell mechanism survives for damped difference-of-convex iterations under bounded Bregman asymmetry and dual-domain admissibility.

\begin{proof}
$D\Psi_\theta(x) = (1-\theta)I + \theta\,\beta X H(p(x)) X^{\top}$ is symmetric with spectrum contained in $[\,1-\theta,\ 1-\theta+\theta\ell\,]$ on $\bar{B}(\xi_\mu, r)$ by Lemma~\ref{lem:concentration}\textup{(iii)}, so its norm is bounded there by $q_\theta$, and $q_\theta < 1$ precisely for $\theta \in (0,2)$, since $|1-\theta| < 1 \Leftrightarrow \theta \in (0,2)$ while $|1-\theta(1-\ell)| < 1 \Leftrightarrow \theta \in (0, 2/(1-\ell)) \supseteq (0,2)$. The ball $\bar{B}(x^\ast_\mu, \rho)$ is contained in $\bar{B}(\xi_\mu, r)$ because $\|x^\ast_\mu - \xi_\mu\| \le 2M\varepsilon_\mu(r)$, it is convex, and it contains the fixed point, so the mean value inequality along its segments gives $\|\Psi_\theta(x) - x^\ast_\mu\| \le q_\theta\,\|x - x^\ast_\mu\|$, which is simultaneously the contraction and the invariance. For the minimax, on $(0,2)$ the branch $|1-\theta(1-\ell)| = 1-\theta(1-\ell)$ is decreasing up to its zero at $1/(1-\ell)$ while $|1-\theta|$ increases past $\theta = 1$; the increasing branch $\theta - 1$ meets the decreasing branch $1 - \theta(1-\ell)$ at $\theta_\star = 2/(2-\ell) < 1/(1-\ell)$ with common value $\ell/(2-\ell)$, beyond which $\theta - 1$ dominates, so the maximum is minimized there.
\end{proof}

\begin{remark}[Kernel modes, the oracle cap, and certificate slack]\label{rem:oracle}
Two structural facts temper Theorem~\ref{thm:overrelax} in practice, and Campaign~G of Section~\ref{sec:experiments} quantifies both. First, $H(p)\mathbf{1} = 0$, so $A(x) = \beta X H(p(x)) X^{\top}$ has rank at most $N-1$; whenever $N \le d$ the smallest eigenvalue of $A(x^\ast_\mu)$ is exactly zero, the spectral radius of $D\Psi_\theta(x^\ast_\mu)$ for $\theta \ge 1$ equals $\max\{\theta-1,\ |1-\theta+\theta\lambda_{\max}|\}$ with $\lambda_{\max} = \lambda_{\max}(A(x^\ast_\mu))$, and the oracle optimum $\theta_{\mathrm{o}} = 2/(2-\lambda_{\max})$ improves the asymptotic factor to $\lambda_{\max}/(2-\lambda_{\max})$, at best a factor of two; measured in iterations to a fixed tolerance, the relative saving is $1 - k_{\mathrm{o}}/k_1 = \log(2-\lambda_{\max})\big/\log\big((2-\lambda_{\max})/\lambda_{\max}\big)$, since $k_1 \simeq \log(e_0/\varepsilon)/\log(1/\lambda_{\max})$ and $k_{\mathrm{o}} \simeq \log(e_0/\varepsilon)/\log((2-\lambda_{\max})/\lambda_{\max})$; this tends to zero as $\lambda_{\max} \to 0$ and is negligible in the well-separated regime. Second, the certified $\theta_\star$ is computed from $\ell$, which bounds the ball supremum of $\|A\|$ through the constants $2(N-1)$ of Lemma~\ref{lem:concentration}; the certified factor is attained exactly at $\theta_\star$, the kernel modes realizing $|1-\theta_\star| = q_{\theta_\star}$, but it improves on the \emph{observed} attention factor $\lambda_{\max}$ only if $\ell/(2-\ell) < \lambda_{\max}$, that is, only if the certificate is tight within a factor $2-\ell$ of the truth. In our experiments the slack $\ell/\lambda_{\max}$ never drops below $2(N-1)$, and plain attention beats the certified overrelaxation in observed iterations throughout the sweep; the practical value of $\theta_\star$ is therefore conditional on sharper certified bounds for $\sup_{\bar B}\|A\|$, which we leave open.
\end{remark}

\begin{theorem}[Unconditional local contraction of the local branch]\label{thm:local-ie}
Assume \eqref{eq:retrieval-cond} and let $\Delta t > 0$ be arbitrary. Then for every $x \in \bar{B}(\xi_\mu, r)$ the implicit equation \eqref{eq:ie} has a unique solution $\Phi_{\Delta t}(x)$ in $\bar{B}(\xi_\mu, r)$, the \emph{local branch} of Definition~\ref{def:ie-objects}, obtainable as the limit of the inner iteration $y_{j+1} = \big(x + \Delta t\, Xp(y_j)\big)/(1+\Delta t)$, which is itself a contraction of the ball with factor $\tfrac{\Delta t}{1+\Delta t}\,\ell_\mu(r) < 1$. The map $\Phi_{\Delta t}$ leaves the ball invariant, is a contraction with factor
\[
\kappa_{\Delta t} \;=\; \frac{1}{1 + \Delta t\,\big(1 - \ell_\mu(r)\big)} \;<\; 1 ,
\]
and its unique fixed point in the ball is the same attractor $x^\ast_\mu$ of Theorem~\textup{\ref{thm:local-family}}. When $\Delta t\,\nu < 1$, the global proximal map is a singleton and its unique selection coincides with the local branch; outside that regime the two objects may differ (Proposition~\ref{prop:tunneling}).
\end{theorem}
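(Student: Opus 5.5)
The plan is to treat the implicit Euler relation on the ball as a fixed-point problem in $y$ for fixed $x$, and then to estimate the resulting solution map. Rearranging \eqref{eq:ie} for the energy \eqref{eq:energy} gives $y = T_x(y) := \big(x + \Delta t\,Xp(y)\big)/(1+\Delta t)$.

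The first step is to show that $T_x$ maps $\bar B(\xi_\mu,r)$ into itself. Take $x,y$ in the ball. Then
\[
\|T_x(y)-\xi_\mu\| \le \frac{\|x-\xi_\mu\| + \Delta t\,\|Xp(y)-\xi_\mu\|}{1+\Delta t} \le \frac{r + 2\Delta t\,M\varepsilon_\mu(r)}{1+\Delta t} \le r,
\]
by Lemma~\ref{lem:concentration}(ii) and \eqref{eq:retrieval-cond}.

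The second step is the contraction of $T_x$. Its differential is $\tfrac{\Delta t}{1+\Delta t}\,\beta X H(p(y))X^\top$, whose norm is at most $\tfrac{\Delta t}{1+\Delta t}\,\ell_\mu(r)$ by Lemma~\ref{lem:concentration}(iii). The ball is convex, so the mean value inequality applies. Banach's theorem then gives a unique solution $\Phi_{\Delta t}(x)$ in the ball, together with convergence of the inner iteration. This settles existence, uniqueness and the claim about the inner iteration, for every $\Delta t>0$.

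For the contraction of $\Phi_{\Delta t}$ itself, take $x,x'$ in the ball with images $y=\Phi_{\Delta t}(x)$ and $y'=\Phi_{\Delta t}(x')$, both in the ball. Subtracting the two relations gives
\[
(y-y') + \Delta t\big(\nabla E(y)-\nabla E(y')\big) = x-x'.
\]
Write $\nabla E(y)-\nabla E(y') = \bar A\,(y-y')$ with $\bar A = \int_0^1 \nabla^2E(y'+s(y-y'))\,ds$. Here $\bar A$ is symmetric, and $\bar A \succeq (1-\ell_\mu(r))I$ by Theorem~\ref{thm:local-family}(c), since the segment stays in the convex ball. Hence $I+\Delta t\bar A \succeq (1+\Delta t(1-\ell_\mu(r)))I$. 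It follows that $\|y-y'\| \le \kappa_{\Delta t}\|x-x'\|$; one can equally argue by taking the inner product with $y-y'$. Invariance of the ball was established above, since $\Phi_{\Delta t}(x)$ is a fixed point of $T_x$ in the ball.

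For the fixed point, the relation with $y=x$ forces $\nabla E(x)=0$. By Proposition~\ref{prop:family} and the uniqueness in Theorem~\ref{thm:local-family}, the only critical point in the ball is $x^\ast_\mu$. Conversely, $x^\ast_\mu$ solves the relation at $x=x^\ast_\mu$, so by local uniqueness $\Phi_{\Delta t}(x^\ast_\mu)=x^\ast_\mu$.

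The last claim concerns the uniqueness regime. When $\Delta t\,\nu<1$, the proximal functional is strongly convex, so the relation has exactly one solution in $\mathbb{R}^d$ (Definition~\ref{def:ie-objects}). That solution must be the local-branch solution, which exists in the ball. The point needing care is this: the contraction bound must use the strong convexity of $E$ on the ball, not merely the Lipschitz bound on $Xp$, because only the former yields the $\Delta t$-uniform factor $\kappa_{\Delta t}$. The main obstacle is therefore to make sure the averaged Hessian is taken along segments lying inside the ball, which convexity of the ball guarantees.
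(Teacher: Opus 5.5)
Your proof is correct, and for existence, uniqueness, the inner contraction, invariance and the identification of the fixed point it follows the paper's argument. Your converse check that $x^\ast_\mu$ solves the relation at $x=x^\ast_\mu$, and your explicit argument for the coincidence with the proximal selection when $\Delta t\,\nu<1$, fill in steps the paper leaves implicit.

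The one genuine difference is the outer contraction. The paper subtracts the fixed-point equations $y=T_x(y)$ and $y'=T_{x'}(y')$ and uses only the Lipschitz constant $\ell_\mu(r)$ of $Xp(\cdot)$ on the ball. This gives $\|y-y'\|\le\tfrac{1}{1+\Delta t}\|x-x'\|+\tfrac{\Delta t}{1+\Delta t}\ell_\mu(r)\|y-y'\|$, and the last term is absorbed into the left-hand side. You instead write $(I+\Delta t\bar A)(y-y')=x-x'$ and use strong monotonicity, $\bar A\succeq(1-\ell_\mu(r))I$; this is the standard resolvent estimate for a strongly monotone gradient.

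Both routes give exactly $\kappa_{\Delta t}$, since $\tfrac{1}{1+\Delta t}\big/\big(1-\tfrac{\Delta t\,\ell_\mu(r)}{1+\Delta t}\big)=\tfrac{1}{1+\Delta t(1-\ell_\mu(r))}$. So your closing claim, that the Lipschitz bound on $Xp$ cannot deliver $\kappa_{\Delta t}$ and that strong convexity is indispensable, is mistaken. It does not affect the validity of your own argument.

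What each route buys:
\begin{itemize}
\item Yours needs only the one-sided lower Hessian bound on a convex set. It therefore transfers to any energy whose gradient is strongly monotone there, with no upper curvature bound required.
\item The paper's needs only a Lipschitz bound on the nonlinearity, not the gradient structure. It mirrors the inner contraction estimate, and it is the form that extends directly to the recurrence $a_{m+1}=(1-\tau)+\tau\ell\,a_m$ of the warm-started Picard hierarchy in Proposition~\ref{prop:matched}, whose limit is $\kappa_{\Delta t}$.
\end{itemize}
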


\begin{proof}
Write the implicit step as the fixed point equation $y = T_x(y) := \big(x + \Delta t\, Xp(y)\big)/(1+\Delta t)$. By Lemma~\ref{lem:concentration}(ii) and \eqref{eq:retrieval-cond}, $\|T_x(y) - \xi_\mu\| \le \big(\|x-\xi_\mu\| + \Delta t\, 2M\varepsilon_\mu(r)\big)/(1+\Delta t) \le r$ for $x, y$ in the ball, and the Lipschitz constant of $Xp(\cdot)$ on the ball is at most $\ell_\mu(r)$ by Lemma~\ref{lem:concentration}(iii), so $T_x$ contracts the ball with factor $\tfrac{\Delta t}{1+\Delta t}\ell_\mu(r) < 1$ and has a unique fixed point $\Phi_{\Delta t}(x)$ there. For the outer contraction, subtracting the fixed point equations for $x$ and $x'$ gives $\|\Phi_{\Delta t}(x) - \Phi_{\Delta t}(x')\| \le \tfrac{1}{1+\Delta t}\|x - x'\| + \tfrac{\Delta t}{1+\Delta t}\,\ell_\mu(r)\,\|\Phi_{\Delta t}(x) - \Phi_{\Delta t}(x')\|$, and solving for the left-hand side yields the factor $\kappa_{\Delta t}$. Fixed points of $\Phi_{\Delta t}$ solve $\nabla E = 0$ in the ball, and Theorem~\ref{thm:local-family} identifies the unique such point as $x^\ast_\mu$.
\end{proof}

\begin{remark}[The two implicit limits]\label{rem:two-limits}
Theorems~\ref{thm:local-family} and \ref{thm:local-ie} exhibit a structural asymmetry between the two ways of letting $\Delta t \to \infty$. The semi-implicit family tends to the attention update, one exact minimization of the convex majorizer, with local factor $\ell_\mu(r)$; the fully implicit scheme tends to the exact equilibrium computation $x = Xp(x)$, with local factor $\kappa_{\Delta t} \to 0$, that is, to the deep equilibrium formulation of the retrieval problem in the sense of \citet{bai2019}, the formulation whose accelerated solution for classical Hopfield energies is the subject of \citet{goemaere2024}. The comparison $\kappa_{\Delta t} < q_1 = \ell_\mu(r)$ holds precisely for $\Delta t > 1/\ell_\mu(r)$, so the implicit scheme outperforms attention per outer step only at very large step sizes; since each outer step is computed by the inner iteration of Theorem~\ref{thm:local-ie}, whose cost unit is the same softmax evaluation as one attention step, the per-evaluation accounting, carried out in Section~\ref{sec:cost}, shows that within the warm-started Picard hierarchy no allocation of inner iterations certifies a better per-evaluation factor than the attention bound $\ell$, and its interest lies instead in the regimes where \eqref{eq:retrieval-cond} fails, near basin boundaries and at moderate $\beta$, where dissipation without step size restriction is the property that survives.
\end{remark}

% --- end of Section 3 ---

% Section 4 -- draft v1
% Assumes: amsmath, amsthm, natbib; environments theorem, lemma, proposition, corollary, remark, definition numbered within section.
% Depends on: Section 2 (eq:energy, eq:family, eq:ie, lem:structure, lem:majorization, prop:family),
%             Section 3 (thm:dissipation, thm:single-limit, lem:concentration, eq:retrieval-cond, thm:local-family).

\section{Discrete versus continuous basins of attraction}\label{sec:basins}

The results of Section~\ref{sec:dissipation} guarantee that every orbit converges to some critical point of $E$; for a memory, the question that matters is to \emph{which} one. This section compares the basins of attraction of the discrete schemes with those of the flow \eqref{eq:flow}. The main positive result, Theorem~\ref{thm:cells}, states that below the escape energy the basins of the flow, of the whole damped attention family, and of the implicit Euler scheme in its uniqueness regime, all contain the same energy cells; at this level of description the discretization does not erode the basin at all, for any step size in the admissible ranges. Two negative results, Propositions~\ref{prop:tunneling} and \ref{prop:collapse}, exhibit the complementary mechanisms by which preservation can fail: the globally selected proximal step tunnels out of non-global cells beyond a quantified step size, and the explicit Euler method turns the attractor into a repeller beyond $\theta \approx 2$, at which point its basin loses interior altogether. Together with the sandwich estimate of Corollary~\ref{cor:sandwich}, which places each certified cell between two concentric balls of nearly equal radii in the well-separated regime, the section identifies a common certified core of the basins below the escape energy and localizes all possible discrepancies between discrete and continuous basins in the region above it; it does not bound the size of those discrepancies, a question left open in Remark~\ref{rem:frontier}.

\subsection{Energy cells and the monotone interpolation lemma}\label{sec:cells}

\begin{definition}[Basins and cells]\label{def:cells}
For the flow \eqref{eq:flow} and an equilibrium $x^\ast$, let $\mathcal{B}(x^\ast) = \{x^0 : x(t; x^0) \to x^\ast \text{ as } t \to \infty\}$ denote its basin of attraction, and for a one-step map $S$ with fixed point $x^\ast$ let $\mathcal{B}_S(x^\ast) = \{x^0 : S^k(x^0) \to x^\ast\}$. For $c > E(x^\ast)$, the \emph{energy cell} $U_c(x^\ast)$ is the connected component of the open sublevel set $\{E < c\}$ containing $x^\ast$; by coercivity (Lemma~\ref{lem:structure}(d)) every cell is bounded. The \emph{escape energy} of $x^\ast$ is
\[
c^\ast(x^\ast)
\;:=\;
\sup\Bigl(
\{E(x^\ast)\}
\cup
\bigl\{c>E(x^\ast): U_c(x^\ast)\text{ contains no critical point of }E\text{ other than }x^\ast\bigr\}
\Bigr).
\]
\end{definition}

Thus $c^\ast(x^\ast)=E(x^\ast)$ when no nontrivial admissible level exists, as may happen at a non-isolated minimizer, in which case the results below are vacuous. Since the cells are nested and increasing in $c$, the threshold is otherwise well defined; when $E$ is a Morse function, $c^\ast(x^\ast)$ is the first critical value at which the component of $x^\ast$ meets another critical point, generically the value of an adjacent index-one saddle. The endpoint and the energy floor inside an admissible cell are recorded explicitly next.

\begin{lemma}[Closure at the escape energy and the cell energy floor]\label{lem:closure}
If $E(x^\ast)<c^\ast(x^\ast)<\infty$, then $U_{c^\ast(x^\ast)}(x^\ast)$ contains no critical point of $E$ other than $x^\ast$. If $c^\ast(x^\ast)=E(x^\ast)$ there is no nontrivial admissible level. Consequently every finite level
\[
 E(x^\ast)<c\le c^\ast(x^\ast)
\]
is admissible in Theorem~\ref{thm:cells}; when $c^\ast(x^\ast)=\infty$, every finite $c>E(x^\ast)$ is admissible. Moreover, for every finite admissible level $c$,
\[
 E(y)\ge E(x^\ast)\qquad\text{for every }y\in U_c(x^\ast).
\]
\end{lemma}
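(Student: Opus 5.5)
The argument will use two facts. First, the cells are open and nested in $c$. Second, a component of the union of a nested family of connected open sets equals that union.

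\emph{Closure at the escape energy.} Suppose $E(x^\ast)<c^\ast:=c^\ast(x^\ast)<\infty$. We write $U_{c^\ast}(x^\ast)=\bigcup_{c<c^\ast}U_c(x^\ast)$, with the union taken over admissible $c$ increasing to $c^\ast$. Each $U_c(x^\ast)$ is connected, contains $x^\ast$ and lies in $\{E<c^\ast\}$, so the union sits inside the component $U_{c^\ast}(x^\ast)$. For the reverse inclusion, take $y\in U_{c^\ast}(x^\ast)$. Since that component is open and path connected, there is a path from $x^\ast$ to $y$ inside it. The path is compact, so $\max E$ along it is some $m<c^\ast$. Choose $c\in(m,c^\ast)$; then the path lies in $\{E<c\}$, hence $y\in U_c(x^\ast)$.

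Note that admissibility is monotone: if $c'<c$ and $c$ is admissible, then $U_{c'}(x^\ast)\subseteq U_c(x^\ast)$, so $c'$ is admissible too. By the definition of the supremum, admissible levels exist arbitrarily close below $c^\ast$. So every $U_c(x^\ast)$ with $c<c^\ast$ is admissible and avoids other critical points.

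Now suppose a critical point $z\ne x^\ast$ lay in $U_{c^\ast}(x^\ast)$. Then $z\in U_c(x^\ast)$ for some admissible $c<c^\ast$, a contradiction. The case $c^\ast=E(x^\ast)$ is immediate from the definition, since the sup is attained only by the trivial element. When $c^\ast=\infty$, the sup is unbounded, so by monotonicity every finite $c$ is admissible. Together these give the stated range of admissible levels.

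\emph{Energy floor.} Fix an admissible finite $c$ and set $K=\overline{U_c(x^\ast)}$, which is compact by coercivity. Suppose some $y\in U_c(x^\ast)$ had $E(y)<E(x^\ast)$. Then $\min_K E<E(x^\ast)<c$, attained at some $z\in K$. Since $E(z)<c$ and $E$ is continuous, $z$ lies in the open set $\{E<c\}$ and is a limit of points of the component $U_c(x^\ast)$. Components of open sets in $\mathbb{R}^d$ are open and closed relative to that set, so $z\in U_c(x^\ast)$, which is open. Hence $z$ is an interior minimizer of $E$ and thus a critical point. Because $E(z)<E(x^\ast)$, we have $z\ne x^\ast$, contradicting admissibility. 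Therefore $E\ge E(x^\ast)$ on the cell.

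The main obstacle is the closure step, specifically showing that the critical-point-free property passes to the limit level $c^\ast$. The compact-path argument handles this. The one subtlety is applying monotonicity of admissibility to levels strictly below the supremum.
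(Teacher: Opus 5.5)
Your proof is correct and follows essentially the paper's route: for the closure statement, a path in the component has maximum energy strictly below $c^\ast(x^\ast)$, so it lies in a lower cell; for the energy floor, $E$ is minimized over the compact closure of the cell and the minimizer is shown to lie in the open cell, where it must be a critical point. The differences are cosmetic. You state explicitly the monotonicity of admissibility, which the paper uses tacitly when it writes ``contradicting the definition of $c^\ast$,'' and you run the floor argument by contradiction, using relative closedness of components, where the paper argues directly via $\partial U_c(x^\ast)\subseteq\{E=c\}$.
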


\begin{proof}
Suppose first that $E(x^\ast)<c^\ast(x^\ast)<\infty$ and that $x^{\dagger}\ne x^\ast$ is critical with $x^{\dagger}\in U_{c^\ast(x^\ast)}(x^\ast)$. Then $E(x^{\dagger})<c^\ast(x^\ast)$ and there is a path $\gamma\subset\{E<c^\ast(x^\ast)\}$ joining $x^\ast$ to $x^{\dagger}$. Compactness of $\gamma$ and continuity of $E$ give $c':=\max_\gamma E<c^\ast(x^\ast)$. Choosing $\max\{c',E(x^{\dagger})\}<c''<c^\ast(x^\ast)$ gives $\gamma\subset\{E<c''\}$ and hence $x^{\dagger}\in U_{c''}(x^\ast)$, contradicting the definition of $c^\ast(x^\ast)$.

For the energy floor, let $c$ be any finite admissible level. Coercivity implies that $\overline{U_c(x^\ast)}$ is compact. Also
\[
 \partial U_c(x^\ast)\subseteq\{E=c\}.
\]
Indeed, if $z\in\partial U_c(x^\ast)$ had $E(z)<c$, then a sufficiently small ball about $z$ would lie in the open set $\{E<c\}$ and meet $U_c(x^\ast)$; their union would be connected, forcing $z\in U_c(x^\ast)$, a contradiction. Hence $E$ attains its minimum on $\overline{U_c(x^\ast)}$, and because $E(x^\ast)<c$ this minimizer cannot lie on the boundary. It is therefore an interior critical point of the cell. By admissibility the only such point is $x^\ast$, proving $E\ge E(x^\ast)$ throughout $U_c(x^\ast)$.
\end{proof}

The mechanism behind the preservation theorem is that the discrete orbits of the family admit a continuous interpolation along which the energy never rises. This is where the unit-curvature majorization of Lemma~\ref{lem:majorization} enters a second time, now evaluated along the whole segment between consecutive iterates rather than only at the endpoint.

\begin{lemma}[Monotone interpolation]\label{lem:interpolation}
Let $\theta \in (0,2]$, $x \in \mathbb{R}^d$, and let $y_t = (1-t)\,x + t\,\Psi_\theta(x)$ for $t \in [0,1]$ be the segment joining $x$ to its update. Then
\[
E(y_t) \;\le\; E(x) \;-\; t\theta\,\Big(1 - \tfrac{t\theta}{2}\Big)\,\big\|\nabla E(x)\big\|^2 \;\le\; E(x)
\qquad\text{for all } t \in [0,1].
\]
\end{lemma}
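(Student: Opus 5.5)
The plan is to evaluate the unit-curvature majorization of Lemma~\ref{lem:majorization} at every point of the segment, not only at its endpoint. Since $\Psi_\theta(x) = x - \theta\nabla E(x)$ by \eqref{eq:family}, the interpolant can be written as $y_t = x - t\theta\,\nabla E(x)$. Each point $y_t$ is therefore itself a relaxed attention step $\Psi_{t\theta}(x)$ with effective parameter $s := t\theta \in [0,\theta] \subseteq [0,2]$. The argument of Theorem~\ref{thm:dissipation}(a) then applies verbatim, with $\theta$ replaced by $s$.

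The key computation inserts $y = y_t$ into \eqref{eq:majorization}. The linear term gives $\langle \nabla E(x), -s\nabla E(x)\rangle = -s\|\nabla E(x)\|^2$. The quadratic term gives $\tfrac12 s^2\|\nabla E(x)\|^2$. Together they yield
\[
E(y_t) \le E(x) - s\bigl(1 - \tfrac{s}{2}\bigr)\|\nabla E(x)\|^2,
\]
which is the first claimed inequality once $s = t\theta$ is substituted back.

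For the second inequality, note that $s(1-s/2) = \tfrac12 s(2-s) \ge 0$ whenever $s \in [0,2]$. This holds because $t \in [0,1]$ and $\theta \in (0,2]$. The correction term is therefore nonpositive, and $E(y_t) \le E(x)$ for all $t \in [0,1]$.

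I expect no genuine obstacle, since the lemma is a direct restatement of the majorization along a ray. The one point to check is the closed endpoint $\theta = 2$, which is allowed here but lies outside the dissipation window $(0,2)$ of Theorem~\ref{thm:dissipation}. At $s = 2$ the certified decrease is zero, so the bound degenerates to $E(y_1) \le E(x)$. This is still nonincreasing and needs no strictness, so including $\theta = 2$ costs nothing. The role of the lemma in the sequel is that the path $t \mapsto y_t$ is continuous and stays in $\{E \le E(x)\}$; if $E(x) < c$, it stays in the open sublevel set $\{E<c\}$ and hence in the same cell.
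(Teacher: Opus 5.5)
Your proof is correct and is the paper's argument: you write $y_t = x - t\theta\,\nabla E(x)$, insert it into the unit-curvature majorization \eqref{eq:majorization} to get $E(y_t) \le E(x) - t\theta\|\nabla E(x)\|^2 + \tfrac12 t^2\theta^2\|\nabla E(x)\|^2$, and then use $t\theta \in [0,2]$ to make the correction term nonpositive. Your remark on the closed endpoint $\theta = 2$, where the certified decrease vanishes but monotonicity survives, is accurate.
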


\begin{proof}
Since $y_t = x - t\theta\,\nabla E(x)$, the majorization \eqref{eq:majorization} gives $E(y_t) \le E(x) - t\theta\,\|\nabla E(x)\|^2 + \tfrac12 t^2\theta^2\,\|\nabla E(x)\|^2$, and $t\theta \in [0,2]$ makes the bracket nonnegative.
\end{proof}

We emphasize what makes Lemma~\ref{lem:interpolation} nontrivial in the present setting. For an $L$-smooth energy the same interpolation argument is folklore for steps below $2/L$; the content here is that, by Lemma~\ref{lem:majorization}, the energy \eqref{eq:energy} is majorized with constant exactly one uniformly in $\beta$, $N$ and the pattern geometry, even though its Hessian is unbounded below as $\beta$ grows, so the admissible range $\theta \in (0,2)$ is uniform over the entire multistable regime and, in particular, contains the attention endpoint $\theta = 1$ with a factor-two margin. For the implicit scheme the segment is the wrong interpolant, since the proximal step is not a gradient step from $x$; the correct one is the proximal trajectory.

\begin{lemma}[Proximal interpolation]\label{lem:prox-path}
Let $\Delta t\,\nu < 1$, with $\nu$ the one-sided Lipschitz constant of Lemma~\textup{\ref{lem:structure}(c)}, and for $x \in \mathbb{R}^d$ let $p_s(x)$ denote the unique proximal point $\operatorname{prox}_{sE}(x)$ for $s \in (0,\Delta t]$, with $p_0(x) = x$. Then $s \mapsto p_s(x)$ is continuous on $[0,\Delta t]$, $p_{\Delta t}(x)$ is the implicit Euler update, and $s \mapsto E(p_s(x))$ is nonincreasing.
\end{lemma}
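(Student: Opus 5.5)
The plan rests on the uniqueness regime. Since $\Delta t\,\nu<1$ and $s\le\Delta t$, we have $s\nu<1$ for every $s\in(0,\Delta t]$. Hence the proximal functional
\[
y\mapsto \phi_s(y) := E(y)+\tfrac{1}{2s}\|y-x\|^2
\]
is $(1/s-\nu)$-strongly convex by Lemma~\ref{lem:structure}(c). So $p_s(x)$ is well defined and unique, and it is characterized by the implicit Euler relation $p_s + s\nabla E(p_s) = x$. This already gives the identification $p_{\Delta t}(x)=P_{\Delta t}(x)$ from Definition~\ref{def:ie-objects}. The proof then has two parts, continuity and monotonicity.

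\textbf{Continuity.} On $(0,\Delta t]$ I will apply the implicit function theorem to $G(y,s)=y+s\nabla E(y)-x$. Its $y$-derivative is $I+s\nabla^2E(y)$, and by Lemma~\ref{lem:structure}(b) this is bounded below by $(1-s\nu)I\succ0$, so it is invertible. Since $E$ is real analytic, $s\mapsto p_s$ is $C^1$ on $(0,\Delta t]$, using uniqueness to patch the local branches together.

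Continuity at $s=0$ is a separate, direct estimate. From $p_s-x=-s\nabla E(p_s)$ and the bound $\|\nabla E(p_s)\|\le\|p_s\|+M$, the points $p_s$ stay in a bounded set as $s\to0$. The argument of Lemma~\ref{lem:absorbing} gives exactly this: $\|p_s\|\le\max\{\|x\|,M\}$. It follows that $\|p_s-x\|\le s(\max\{\|x\|,M\}+M)\to0$.

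\textbf{Monotonicity.} There are two routes, and I would present the first.

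The first is a comparison of minimal values. For $0<s<s'\le\Delta t$, minimality of $p_s$ for $\phi_s$ and of $p_{s'}$ for $\phi_{s'}$ gives
\[
E(p_s)+\tfrac{1}{2s}\|p_s-x\|^2\le E(p_{s'})+\tfrac{1}{2s}\|p_{s'}-x\|^2,
\]
\[
E(p_{s'})+\tfrac{1}{2s'}\|p_{s'}-x\|^2\le E(p_s)+\tfrac{1}{2s'}\|p_s-x\|^2 .
\]
Adding the two yields $(\tfrac1{2s}-\tfrac1{2s'})(\|p_s-x\|^2-\|p_{s'}-x\|^2)\le0$, so $\|p_s-x\|$ is nondecreasing in $s$. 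Feeding this back into the second inequality gives $E(p_{s'})\le E(p_s)$. The case $s=0$ follows from minimality against the competitor $y=x$, exactly as in Theorem~\ref{thm:dissipation}(b): $E(p_s)\le E(x)$.

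The second route is differential. Differentiating the relation gives $\dot p_s=-(I+s\nabla^2E)^{-1}\nabla E(p_s)$, hence
\[
\tfrac{d}{ds}E(p_s)=-\langle\nabla E,(I+s\nabla^2E)^{-1}\nabla E\rangle\le0,
\]
because the inverse matrix is positive definite. I would include this only as a remark.

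\textbf{Main obstacle.} Continuity is where care is needed; monotonicity is elementary. Specifically, continuity at $s=0$ must not rely on strong convexity degenerating in a harmful way. The implicit function theorem step is routine once the bound $(1-s\nu)I$ is in hand. If analyticity felt heavy, I would instead derive a Lipschitz estimate in $s$ directly from strong monotonicity of $y\mapsto y+s\nabla E(y)$. Subtracting the relations at $s$ and $s'$ gives $(1-s\nu)\|p_s-p_{s'}\|\le|s-s'|\,\|\nabla E(p_{s'})\|$, which is uniformly bounded, so this yields continuity on all of $[0,\Delta t]$ at once.
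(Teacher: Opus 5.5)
Your proposal is correct and follows essentially the same route as the paper. Both use strong convexity of the proximal functional for $s\nu<1$, the implicit function theorem with the invertible Jacobian $I+s\nabla^2E(p_s)$ for continuity, and the sum of the two cross-minimality inequalities, which gives $\|p_s-x\|$ nondecreasing and then $E(p_{s'})\le E(p_s)$. The one difference is at the endpoint $s=0$. Your estimate $\|p_s-x\|\le s(\max\{\|x\|,M\}+M)$, or your alternative Lipschitz bound from strong monotonicity, treats it more explicitly, whereas the paper simply folds $s\to0^{+}$ into the implicit-function-theorem argument.
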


\begin{proof}
For $s\,\nu < 1$ the proximal functional is $(\tfrac1s - \nu)$-strongly convex, so $p_s(x)$ is single valued and solves $p_s = x - s\,\nabla E(p_s)$; continuity in $s$, including $p_s \to x$ as $s \to 0^{+}$, follows from the implicit function theorem, the Jacobian $I + s\,\nabla^2E(p_s)$ being invertible in this regime. For the monotonicity, let $0 < s_1 < s_2 \le \Delta t$ and abbreviate $p_i = p_{s_i}(x)$. Optimality of $p_1$ at parameter $s_1$ against $p_2$, and of $p_2$ at parameter $s_2$ against $p_1$, give $E(p_1) + \tfrac{1}{2s_1}\|p_1 - x\|^2 \le E(p_2) + \tfrac{1}{2s_1}\|p_2 - x\|^2$ and $E(p_2) + \tfrac{1}{2s_2}\|p_2 - x\|^2 \le E(p_1) + \tfrac{1}{2s_2}\|p_1 - x\|^2$; adding them yields $\|p_1 - x\| \le \|p_2 - x\|$, and substituting back into the second inequality yields $E(p_2) \le E(p_1)$.
\end{proof}

\subsection{Preservation of energy cells}\label{sec:preservation}

\begin{theorem}[Cell preservation]\label{thm:cells}
Let $x^\ast$ be a local minimizer of $E$ and let $c$ be finite with $E(x^\ast) < c \le c^\ast(x^\ast)$, so that by Lemma~\ref{lem:closure} the cell $U_c(x^\ast)$ contains no critical point other than $x^\ast$. Then
\[
U_c(x^\ast) \;\subseteq\; \mathcal{B}(x^\ast) \;\cap\; \bigcap_{\theta \in (0,2)} \mathcal{B}_{\Psi_\theta}(x^\ast) \;\cap\; \bigcap_{0 < \Delta t < 1/\nu^{+}} \mathcal{B}_{\Phi_{\Delta t}}(x^\ast),
\]
where $\nu^{+} = \max\{\nu, 0\}$ and $1/\nu^{+} = \infty$ when $\nu \le 0$. In words, every energy cell below the escape energy lies in the basin of the flow, of every member of the damped attention family, and of the implicit Euler scheme throughout its uniqueness regime, simultaneously and with no further condition on the data.
\end{theorem}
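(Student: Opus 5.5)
The plan is to prove the three inclusions by one common argument: invariance of the cell, then identification of the limit. Fix an admissible level $c$ and write $U=U_c(x^\ast)$. By Lemma~\ref{lem:closure}, $U$ contains no critical point other than $x^\ast$. Since $x^\ast$ is a local minimizer, $E\ge E(x^\ast)$ on $U$, and $\overline U$ is compact by coercivity.

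The first step is \emph{invariance}. Each dynamics admits a continuous path from $x$ to its image along which $E$ never exceeds $E(x)<c$. For the flow this path is the trajectory itself, since $\frac{d}{dt}E=-\|\nabla E\|^2\le 0$. For $\Psi_\theta$ with $\theta\in(0,2)$ it is the segment $y_t$ of Lemma~\ref{lem:interpolation}. For $\Phi_{\Delta t}$ with $\Delta t\,\nu^+<1$ it is the proximal path $s\mapsto p_s(x)$ of Lemma~\ref{lem:prox-path}; in this regime the global proximal map is single valued and coincides with $\Phi_{\Delta t}$ by Definition~\ref{def:ie-objects}, which is what makes $\Phi_{\Delta t}$ meaningful on the whole cell rather than only on a retrieval ball. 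In each case the path is a connected subset of $\{E<c\}$ that starts in $U$. Because $U$ is a connected component, the whole path stays in $U$, and in particular so does the endpoint. Hence $U$ is forward invariant for all three dynamics, and the flow exists globally in forward time because its orbit stays in a compact set.

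The second step is \emph{identification of the limit}. Take the orbit from $x^0\in U$; it lies in the compact set $\overline U$. Theorem~\ref{thm:dissipation} gives $\nabla E(x^k)\to0$ for the discrete schemes. For the flow, $\int_0^\infty\|\nabla E\|^2\,dt<\infty$, and $\nabla E$ is uniformly continuous along the bounded orbit, so $\nabla E(x(t))\to0$ as well. Every accumulation point $z$ is therefore critical and lies in $\overline U$, with $E(z)\le E(x^0)<c$ because $E$ is nonincreasing along the orbit. The step I expect to need the most care is ruling out $z\in\partial U$. Here the argument of Lemma~\ref{lem:closure} applies: $\partial U\subseteq\{E=c\}$, so $E(z)<c$ puts $z$ in $U$. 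Admissibility then forces $z=x^\ast$. Since the orbit is precompact and its only accumulation point is $x^\ast$, the whole orbit converges to $x^\ast$, so no {\L}ojasiewicz argument is needed.

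Finally, $x^\ast$ is a fixed point of every map involved by Proposition~\ref{prop:family}. Each inclusion is therefore exactly convergence of the orbit to $x^\ast$, and intersecting over $\theta\in(0,2)$ and over $0<\Delta t<1/\nu^+$ gives the stated inclusion. The remaining subtlety is making sure the endpoint $\theta=2$ is not needed; it is not, because the range $(0,2)$ is open and Lemma~\ref{lem:interpolation} already covers it.
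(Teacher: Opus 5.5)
Your proposal is correct and follows the paper's proof essentially step for step: a monotone continuous interpolant confines the orbit to the bounded cell, namely the trajectory, the segment of Lemma~\ref{lem:interpolation}, or the proximal path of Lemma~\ref{lem:prox-path}; vanishing gradients make every accumulation point critical; and $E<c$ at the limit places it inside $U_c(x^\ast)$, where $x^\ast$ is the only critical point. Your Barbalat-type argument for the flow in place of LaSalle, and your use of $\partial U\subseteq\{E=c\}$ in place of the open-ball argument, are cosmetic variants; one small slip is that the floor $E\ge E(x^\ast)$ on $U$ comes from Lemma~\ref{lem:closure}, not from local minimality alone, though you never actually use it.
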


\begin{proof}
We give the argument for $\Psi_\theta$; the flow and the implicit scheme are identical after replacing the segment interpolant by the trajectory itself and by the proximal path of Lemma~\ref{lem:prox-path}, respectively. Let $x^0 \in U_c(x^\ast)$ and let $\gamma$ be the piecewise-linear curve obtained by concatenating the segments $[x^{k}, x^{k+1}]$. By Lemma~\ref{lem:interpolation}, $E \le E(x^{k}) \le E(x^0) < c$ along the $k$-th segment, so $\gamma$ is a connected subset of $\{E < c\}$ containing $x^0$, hence $\gamma \subseteq U_c(x^\ast)$ and in particular the whole orbit remains in the cell, which is bounded, so the orbit is precompact. By Theorem~\ref{thm:dissipation}, $\nabla E(x^{k}) \to 0$ along the orbit, via \eqref{eq:descent-family} for the family and via \eqref{eq:descent-ie} together with the optimality condition $\nabla E(x^{k+1}) = -(x^{k+1}-x^{k})/\Delta t$ for the proximal selection. Every accumulation point $x^{\dagger}$ is therefore critical, satisfies $E(x^{\dagger}) \le E(x^{0}) < c$, and, being a limit of points of the component $U_c(x^\ast)$ with a ball neighborhood inside the open set $\{E < c\}$ meeting $U_c(x^\ast)$, lies in $U_c(x^\ast)$; the only critical point there is $x^\ast$, so $x^\ast$ is the unique accumulation point of a precompact orbit and $x^{k} \to x^\ast$. For the flow the same argument applies with the trajectory as interpolant, LaSalle's invariance principle supplying criticality of the accumulation points. No {\L}ojasiewicz-type argument is needed inside a cell.
\end{proof}

Theorem~\ref{thm:cells} is the precise sense in which the family $\Psi_\theta$ and the moderately stepped implicit scheme are structure preserving for retrieval, as not only the equilibria (Proposition~\ref{prop:family}) but the entire certified core of each basin passes to the discretization unchanged, uniformly in the step size parameter. Two readings are worth separating. Quantitatively, the theorem converts every lower bound on the escape energy into a common certified basin for all schemes at once; the next corollary provides such a bound, with explicit constants, in the well-separated regime; we reserve \emph{escape energy} for the topological quantity $c^\ast$, computable (Campaign~E) but not closed-form, and call the bound $c_r$ below the \emph{explicit certified escape level}. The corollary also shows that the resulting cells are sandwiched between two balls of comparable radii, sharpening the ball-shaped certified regions of Theorem~\ref{thm:local-family} in the same way that the relative nonlinear measure estimates of \citet{villatoro2005} sharpen pointwise contraction to a basin statement. Qualitatively, the theorem localizes all possible basin discrepancies between the discrete schemes and the flow in the region above the escape energy, near the stable manifolds of the saddles, which is where the numerical exploration of Section~\ref{sec:experiments} concentrates.

\begin{corollary}[Sandwich estimate for cells]\label{cor:sandwich}
Assume the retrieval condition \eqref{eq:retrieval-cond} on the ball $\bar{B}(\xi_\mu, r)$, write $\ell = \ell_\mu(r)$, and let $x^\ast_\mu$ be the attractor of Theorem~\textup{\ref{thm:local-family}}. Set
\[
c_r \;=\; E(x^\ast_\mu) \;+\; \tfrac12\,(1-\ell)\,\big(r - 2M\varepsilon_\mu(r)\big)^2 .
\]
Then for every $c$ with $E(x^\ast_\mu) < c \le c_r$: the cell $U_c(x^\ast_\mu)$ is contained in $\bar{B}(\xi_\mu, r)$, contains no critical point other than $x^\ast_\mu$, so that $c_r \le c^\ast(x^\ast_\mu)$ and Theorem~\textup{\ref{thm:cells}} applies to it, and moreover
\[
B\big(x^\ast_\mu,\ \rho_-\big) \;\subseteq\; U_c(x^\ast_\mu) \;\subseteq\; B\big(x^\ast_\mu,\ \rho_+\big),
\qquad
\rho_- = \sqrt{2\,\big(c - E(x^\ast_\mu)\big)}, \quad \rho_+ = \frac{\rho_-}{\sqrt{1-\ell}} \;>\; \rho_- .
\]
\end{corollary}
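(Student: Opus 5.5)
The plan is to work on the ball $B_\rho := \bar{B}(x^\ast_\mu,\rho)$ with $\rho = r - 2M\varepsilon_\mu(r)$. By Theorem~\ref{thm:local-family} we have $\|x^\ast_\mu-\xi_\mu\|\le 2M\varepsilon_\mu(r)$, so $B_\rho\subseteq\bar{B}(\xi_\mu,r)$. There Lemma~\ref{lem:concentration}(iii) and Lemma~\ref{lem:structure}(b) give the two-sided Hessian bound $(1-\ell)I\preceq\nabla^2E\preceq I$. Since $\nabla E(x^\ast_\mu)=0$, Taylor's theorem along segments from $x^\ast_\mu$ (the ball is convex) yields, for every $y\in B_\rho$,
\[
\tfrac12(1-\ell)\|y-x^\ast_\mu\|^2 \;\le\; E(y)-E(x^\ast_\mu)\;\le\;\tfrac12\|y-x^\ast_\mu\|^2 .
\]
The upper quadratic bound holds globally by Lemma~\ref{lem:majorization} at $x=x^\ast_\mu$. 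This global validity is what gives the inner inclusion cleanly: every $y$ with $\|y-x^\ast_\mu\|<\rho_-$ satisfies $E(y)<c$. The open ball $B(x^\ast_\mu,\rho_-)$ is connected and contains $x^\ast_\mu$, so it lies in $U_c(x^\ast_\mu)$.

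The main step, and the expected obstacle, is the outer inclusion. The lower quadratic bound is only valid inside $B_\rho$, so we must rule out that the cell escapes the ball, and for this a connectedness argument is needed. On the sphere $\|y-x^\ast_\mu\|=\rho$ the lower bound gives $E(y)\ge E(x^\ast_\mu)+\tfrac12(1-\ell)\rho^2=c_r\ge c$. Hence the open set $\{E<c\}$ does not meet this sphere. The cell $U_c(x^\ast_\mu)$ is connected and contains the centre, so it splits as a union of its parts inside and outside the open ball $B(x^\ast_\mu,\rho)$. Both parts are relatively open, since the sphere is avoided. Connectedness therefore forces $U_c(x^\ast_\mu)\subseteq B(x^\ast_\mu,\rho)\subseteq\bar{B}(\xi_\mu,r)$. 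Inside that ball the lower bound then gives $\|y-x^\ast_\mu\|^2\le 2(E(y)-E(x^\ast_\mu))/(1-\ell)<\rho_-^2/(1-\ell)$, which is the inclusion in $B(x^\ast_\mu,\rho_+)$.

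Finally, the only critical point in $\bar{B}(\xi_\mu,r)$ is $x^\ast_\mu$. This holds because critical points are fixed points of $\Psi_1$ (Proposition~\ref{prop:family}), and $\Psi_1$ has a unique fixed point in that ball by Theorem~\ref{thm:local-family}(b); alternatively, strong convexity on the convex ball gives the same conclusion. So $U_c(x^\ast_\mu)$ contains no other critical point for every $c\le c_r$. By the definition of $c^\ast$ as a supremum over admissible levels, this gives $c_r\le c^\ast(x^\ast_\mu)$. The point $x^\ast_\mu$ is a local minimizer by Theorem~\ref{thm:local-family}(c), so Theorem~\ref{thm:cells} applies. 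The strict inequality $\rho_+>\rho_-$ holds because $\ell\in(0,1)$; if $\ell=0$ were possible it would be an equality, which should be noted. Since $\ell>0$ whenever $N\ge2$, I would simply remark this.
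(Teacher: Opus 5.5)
Your proof is correct and follows the paper's route. The inner ball comes from the global unit-curvature majorization at the critical point, the outer ball from the local strong-convexity lower bound combined with a connectedness argument, and critical-point uniqueness from Theorem~\ref{thm:local-family}(b). The only difference is cosmetic: you separate the cell from the exterior by the sphere $\|y-x^\ast_\mu\|=\rho$, on which $E\ge c_r$, whereas the paper applies the lower bound on all of $\bar{B}(\xi_\mu,r)$ and shows that the cell's intersection with this closed ball is open and closed in the cell.
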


\begin{proof}
The global majorization \eqref{eq:majorization} at the critical point $x^\ast_\mu$ reads $E(y) \le E(x^\ast_\mu) + \tfrac12\|y - x^\ast_\mu\|^2$ for all $y$, so the open ball of radius $\rho_-$ lies in $\{E < c\}$; being connected and containing $x^\ast_\mu$, it lies in the cell, which proves the inner inclusion. For the outer inclusion, strong convexity on the ball, Theorem~\ref{thm:local-family}(c), gives $E(y) \ge E(x^\ast_\mu) + \tfrac12(1-\ell)\,\|y - x^\ast_\mu\|^2$ for $y \in \bar{B}(\xi_\mu, r)$, so every point of the cell that lies in $\bar{B}(\xi_\mu, r)$ satisfies $\|y - x^\ast_\mu\| < \rho_+$, and $\rho_+ \le r - 2M\varepsilon_\mu(r) \le r - \|x^\ast_\mu - \xi_\mu\|$ by the definition of $c_r$ and Theorem~\ref{thm:local-family}(b), so such points are interior to $\bar{B}(\xi_\mu, r)$ at distance at least $r - \|x^\ast_\mu - \xi_\mu\| - \rho_+ \ge 0$ from its boundary; hence the intersection of the cell with the closed ball is open and closed in the cell, and by connectedness the cell lies entirely in the ball. Uniqueness of the critical point in the ball is Theorem~\ref{thm:local-family}(b), completing the proof.
\end{proof}

Since $\ell$ is exponentially small in $\beta(\Delta_\mu - 2Mr)$, the two radii differ by a factor $1/\sqrt{1-\ell} = 1 + O(\ell)$; in the well-separated regime each certified cell is sandwiched between two concentric balls, centered at the attractor rather than at the pattern, whose radii agree to within an exponentially small relative factor, and Theorem~\ref{thm:cells} hands the whole certified ball to every scheme in the family at once. It is only above $c_r$, and ultimately above $c^\ast(x^\ast_\mu)$, that the geometry of the basins can become scheme dependent.

\subsection{The two failure modes}\label{sec:failure}

The restriction to the uniqueness regime $\Delta t\,\nu < 1$ in Theorem~\ref{thm:cells} is not an artifact of the proof. Beyond it, the globally selected proximal step of Theorem~\ref{thm:dissipation}(b) can leave the cell in a single iteration, and does so systematically once the step size exceeds an explicit threshold determined by the energy drop available elsewhere.

\begin{proposition}[Tunneling of the global proximal step]\label{prop:tunneling}
Let $x^\ast$ be a local minimizer that is not a global minimizer, let $x_g$ be a global minimizer, and let $\delta=E(x^\ast)-E(x_g)>0$. Fix $E(x^\ast)<c\le c^\ast(x^\ast)$ and $x^0\in U_c(x^\ast)$. If
\[
 \Delta t>\Delta t^\ast(x^0):=\frac{\|x_g-x^0\|^2}{2\delta},
\]
then every $x^1\in\mathcal{P}_{\Delta t}(x^0)$ satisfies $E(x^1)<E(x^\ast)$. Consequently $x^1\notin U_c(x^\ast)$, no subsequent orbit generated by global proximal minimizers $x^{k+1}\in\mathcal{P}_{\Delta t}(x^k)$ can enter $U_c(x^\ast)$, and every such orbit converges to a critical point different from $x^\ast$. In particular, for every single-valued global proximal selection $P_{\Delta t}$,
\[
 x^0\in\mathcal{B}(x^\ast)\setminus\mathcal{B}_{P_{\Delta t}}(x^\ast).
\]
\end{proposition}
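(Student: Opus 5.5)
The plan is to compare the proximal objective at any minimizer $x^1$ against the competitor $y=x_g$. Minimality of $x^1$ in $\mathcal{P}_{\Delta t}(x^0)$ gives
\[
E(x^1)\;\le\;E(x^1)+\tfrac{1}{2\Delta t}\|x^1-x^0\|^2\;\le\;E(x_g)+\tfrac{1}{2\Delta t}\|x_g-x^0\|^2 .
\]
Under the hypothesis $\Delta t>\|x_g-x^0\|^2/(2\delta)$, the penalty term on the right is strictly less than $\delta$. Hence $E(x^1)<E(x_g)+\delta=E(x^\ast)$. This is the whole quantitative content, and it needs no structure beyond global minimality.

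Next I will show that $x^1$ has left the cell, using the energy floor of Lemma~\ref{lem:closure}. Since $c$ is a finite admissible level, every $y\in U_c(x^\ast)$ satisfies $E(y)\ge E(x^\ast)$. The strict inequality $E(x^1)<E(x^\ast)$ therefore forces $x^1\notin U_c(x^\ast)$.

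For the subsequent orbit, Theorem~\ref{thm:dissipation}(b) makes $E(x^k)$ nonincreasing along any sequence of global proximal minimizers. So $E(x^k)\le E(x^1)<E(x^\ast)$ for all $k\ge1$, and the same floor argument excludes every $x^k$ from $U_c(x^\ast)$.

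Convergence follows from Theorem~\ref{thm:single-limit}, which sends such an orbit to a single critical point $x^\dagger$. By continuity, $E(x^\dagger)=\lim E(x^k)\le E(x^1)<E(x^\ast)$, so $x^\dagger\neq x^\ast$. Applied to the single-valued selection $P_{\Delta t}$, this shows $x^0\notin\mathcal{B}_{P_{\Delta t}}(x^\ast)$. On the other hand, $x^0\in\mathcal{B}(x^\ast)$ is the flow part of Theorem~\ref{thm:cells}, which applies because $x^\ast$ is a local minimizer and $E(x^\ast)<c\le c^\ast(x^\ast)$.

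The argument has no genuinely hard step; the point needing most care is the energy floor. Without it, a point of lower energy than $x^\ast$ could a priori lie inside the cell. The floor is exactly what Lemma~\ref{lem:closure} supplies, and it relies on admissibility of $c$, including the endpoint case $c=c^\ast(x^\ast)$, which that lemma covers. One further check is that the strict inequality survives: it comes from the strict hypothesis on $\Delta t$. The case $x_g=x^0$ cannot occur, since $E(x_g)<E(x^\ast)\le E(x^0)$ inside the cell, and the bound holds regardless.
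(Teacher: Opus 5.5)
Your proposal is correct and follows essentially the same route as the paper. It tests proximal minimality against the competitor $x_g$, uses the energy floor of Lemma~\ref{lem:closure} to expel $x^1$ from the cell, monotonicity from Theorem~\ref{thm:dissipation}(b), and Theorem~\ref{thm:single-limit} for convergence, and your explicit appeal to the flow part of Theorem~\ref{thm:cells} for $x^0\in\mathcal{B}(x^\ast)$ fills in a step the paper leaves implicit.
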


\begin{proof}
Testing the proximal minimality at $x^1$ against the competitor $x_g$ gives $E(x^1) \le E(x^1) + \tfrac{1}{2\Delta t}\|x^1 - x^0\|^2 \le E(x_g) + \tfrac{1}{2\Delta t}\|x_g - x^0\|^2 < E(x_g) + \delta = E(x^\ast)$. Since $E \ge E(x^\ast)$ on $U_c(x^\ast)$ by Lemma~\ref{lem:closure}, $x^1$ lies outside the cell, and by the monotonicity of the energy along the orbit (Theorem~\ref{thm:dissipation}(b)) so do all later iterates; Theorem~\ref{thm:single-limit} then delivers convergence to a critical point of energy below $E(x^\ast)$, which cannot be $x^\ast$.
\end{proof}

Proposition~\ref{prop:tunneling} assigns complementary roles to the two implicit objects of Definition~\ref{def:ie-objects}, and applied at $x^0 = x^\ast$ itself it shows that a non-global local minimizer ceases to satisfy $x^\ast\in\mathcal{P}_{\Delta t}(x^\ast)$ for $\Delta t > \|x_g - x^\ast\|^2/(2\delta)$, the fact recorded after Proposition~\ref{prop:family}. As a global optimizer of \eqref{eq:energy} the large-step proximal scheme is excellent, reaching low energy in one step; as a memory it is defective, because retrieval is precisely the task of converging to the attractor designated by the initial condition, and tunneling answers a different question. The threshold $\Delta t^\ast(x^0)$ is explicit and finite on every cell of every non-global attractor, while cell preservation is guaranteed for $\Delta t\,\nu < 1$; the behavior in the intermediate window $[1/\nu, \Delta t^\ast]$, where the proximal map may be multivalued but need not yet tunnel, is not resolved by these results and is examined numerically in Section~\ref{sec:experiments}.

The second failure mode concerns the only member of the family able to leave $\theta \in (0,1]$, the explicit Euler method, and shows that the endpoint $\theta = 2$ of Lemma~\ref{lem:interpolation} is essentially sharp, since shortly beyond it the attractor does not merely shed basin volume but becomes a repeller.

\begin{proposition}[Overshoot collapse of explicit Euler]\label{prop:collapse}
Assume the retrieval condition \eqref{eq:retrieval-cond}, write $\ell = \ell_\mu(r)$, and assume that $E$ has at least one critical point other than $x^\ast_\mu$, as is the case whenever a second pattern satisfies \eqref{eq:retrieval-cond}, and as is guaranteed under the geometric conditions of \citet{beise2026}. If $\theta > 2/(1-\ell)$, then every eigenvalue of $D\Psi_\theta(x^\ast_\mu)$ has modulus at least $\theta(1-\ell) - 1 > 1$, so $x^\ast_\mu$ is a repelling fixed point of $\Psi_\theta$. Consequently there is a neighborhood $V$ of $x^\ast_\mu$ such that every orbit converging to $x^\ast_\mu$ reaches it exactly in finitely many steps, and the basin $\mathcal{B}_{\Psi_\theta}(x^\ast_\mu)$ has empty interior.
\end{proposition}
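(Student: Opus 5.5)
The plan is to work entirely with the Jacobian at the attractor. Then I would use a local expansion argument to show that orbits converging to $x^\ast_\mu$ must land on it exactly, and finish with a measure or category argument for the basin.

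\emph{Step 1 (spectrum).} By Theorem~\ref{thm:local-family}, $x^\ast_\mu$ lies in $\bar B(\xi_\mu,r)$, and
\[
D\Psi_\theta(x^\ast_\mu)=(1-\theta)I+\theta A,\qquad A=\beta XH(p(x^\ast_\mu))X^{\top},
\]
is symmetric with $0\preceq A\preceq \ell I$ by Lemma~\ref{lem:concentration}(iii). Its eigenvalues therefore lie in $[1-\theta,\,1-\theta+\theta\ell]$. For $\theta>2/(1-\ell)$ the upper endpoint satisfies $1-\theta(1-\ell)<-1$, so the whole interval lies to the left of $-1$. Every eigenvalue $\lambda$ thus has $|\lambda|\ge \theta(1-\ell)-1>1$.

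\emph{Step 2 (local expansion).} Because $D\Psi_\theta(x^\ast_\mu)$ is symmetric, $\|D\Psi_\theta(x^\ast_\mu)v\|\ge m\|v\|$ with $m:=\theta(1-\ell)-1>1$. By continuity of $D\Psi_\theta$, which is smooth, there is a ball $V=B(x^\ast_\mu,\eta)$ and $m'\in(1,m)$ on which $\|\Psi_\theta(x)-x^\ast_\mu\|\ge m'\|x-x^\ast_\mu\|$. To see this, write
\[
\Psi_\theta(x)-x^\ast_\mu=\int_0^1 D\Psi_\theta(x^\ast_\mu+s(x-x^\ast_\mu))\,ds\,(x-x^\ast_\mu)
\]
and perturb the symmetric matrix. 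Alternatively one can simply use $\|D\Psi_\theta(y)-D\Psi_\theta(x^\ast_\mu)\|\le m-m'$ on $V$. Now suppose an orbit converges to $x^\ast_\mu$ but never reaches it. Then it eventually stays in $V\setminus\{x^\ast_\mu\}$, and there its distance to $x^\ast_\mu$ grows by the factor $m'>1$ at each step, which contradicts convergence. Hence every orbit in $\mathcal B_{\Psi_\theta}(x^\ast_\mu)$ hits $x^\ast_\mu$ exactly after finitely many steps, so
\[
\mathcal B_{\Psi_\theta}(x^\ast_\mu)=\bigcup_{k\ge0}\Psi_\theta^{-k}(\{x^\ast_\mu\}).
\]

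\emph{Step 3 (empty interior).} This is the main obstacle, since $\Psi_\theta$ need not be injective and a preimage could a priori contain an open set. I would use real analyticity: $\Psi_\theta^k$ is real analytic on $\mathbb R^d$, and so is $g_k(x)=\|\Psi_\theta^k(x)-x^\ast_\mu\|^2$. If some $\Psi_\theta^{-k}(\{x^\ast_\mu\})$ had interior, then $g_k$ would vanish on an open set, hence identically on $\mathbb R^d$. Then every point would reach $x^\ast_\mu$ after $k$ steps. In particular the second critical point $x^\dagger\neq x^\ast_\mu$, which exists by hypothesis, would satisfy $\Psi_\theta^k(x^\dagger)=x^\dagger\neq x^\ast_\mu$, since critical points are fixed points by Proposition~\ref{prop:family}. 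That is a contradiction, and this is exactly where the hypothesis on a second critical point enters. Consequently each preimage $\Psi_\theta^{-k}(\{x^\ast_\mu\})$ is the zero set of a nontrivial analytic function, hence closed and nowhere dense, indeed of measure zero. The basin is a countable union of closed nowhere dense sets, so it has empty interior by the Baire category theorem. If one prefers to avoid Baire, the measure-zero property gives the same conclusion.
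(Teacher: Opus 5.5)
Your proof is correct and follows the paper's argument step for step. You use the spectral enclosure $[1-\theta,\,1-\theta+\theta\ell]\subset(-\infty,-1)$ from Lemma~\ref{lem:concentration}(iii), then a continuity-based expansion estimate on a neighborhood $V$ that forces convergent orbits to land exactly on $x^\ast_\mu$, and finally real analyticity of $\Psi_\theta^k$ with Baire, where the second critical point, being a fixed point of $\Psi_\theta^k$, supplies the contradiction. The only cosmetic difference is the order of the Baire step: you first show each preimage $\Psi_\theta^{-k}(\{x^\ast_\mu\})$ is closed and nowhere dense, whereas the paper argues by contradiction from an assumed open subset of the basin, and the two are equivalent.
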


\begin{proof}
On the retrieval ball, $D\Psi_\theta(x) = (1-\theta)I + \theta\,\beta X H(p(x)) X^{\top}$ is symmetric with spectrum contained in $[\,1-\theta,\ 1-\theta+\theta\ell\,]$ by Lemma~\ref{lem:concentration}(iii); for $\theta > 2/(1-\ell)$ this interval lies in $(-\infty, -1)$, and all eigenvalues at $x^\ast_\mu$ have modulus at least $\theta - 1 - \theta\ell = \theta(1-\ell) - 1 > 1$. By continuity there are a neighborhood $V$ and $\lambda > 1$ with $\|\Psi_\theta(x) - x^\ast_\mu\| \ge \lambda\,\|x - x^\ast_\mu\|$ for $x \in V$, so orbits in $V \setminus \{x^\ast_\mu\}$ leave $V$, and an orbit can converge to $x^\ast_\mu$ only by landing on it exactly. If the set of initial points doing so contained an open set, then by the Baire category theorem some iterate $\Psi_\theta^{k}$ would be constant, equal to $x^\ast_\mu$, on a nonempty open set; $\Psi_\theta^{k}$ is real analytic, hence would be constant on $\mathbb{R}^d$, contradicting the assumed existence of a second critical point, which is a fixed point of $\Psi_\theta^{k}$.
\end{proof}

The threshold has a classical reading. If the flow carries a relaxation time $\tau$, $\dot{x} = \tau^{-1}(-x + Xp(x))$, then the unit-step explicit Euler convention of the neural network literature corresponds to $\theta = 1/\tau$, and Proposition~\ref{prop:collapse} predicts collapse for $\tau < \tfrac12(1-\ell) \approx \tfrac12$: the instability of unit-step synchronous discretizations, documented since the Takeda--Goodman model \citep{takeda1986} and part of the original motivation for the implicit schemes of \citet{atencia2002} and \citet{villatoro2005}, reappears here in sharp quantitative form, and the safety of the attention update is re-derived as the statement $\theta = 1 < 2$, with margin uniform in $\beta$.

\begin{remark}[What is not preserved, and what breaks at higher order]\label{rem:frontier}
Above the escape energy, Theorem~\ref{thm:cells} is silent, and genuinely so; there the continuous basin is bounded by the stable manifolds of saddle points, whose systematic existence, attached to higher-dimensional faces of the pattern polytope, has recently been established by \citet{beise2026}, and which discretization does deform; the discrete basin boundaries of $\Psi_\theta$ may in principle acquire the complicated structure familiar from discretized gradient systems. Quantifying this deformation, for instance through the measure of the symmetric difference of basins as a function of $\theta$ and $\beta$, appears to require tools beyond energy monotonicity, and is investigated numerically in Section~\ref{sec:experiments}. Separately, the two lemmas that power this section fail for generic second-order integrators, whose stages are not gradient steps of $E$ and do not in general furnish a monotone interpolant; restoring a discrete energy law at order two, via second-order convex splitting or scalar auxiliary variable reformulations, is the subject of Section~\ref{sec:order}, and Theorem~\ref{thm:cells} extends verbatim to any scheme for which a monotone interpolant can be exhibited, which is the property we shall demand of them.
\end{remark}

% --- end of Section 4 ---

% Section 5 -- draft v1
% Assumes: amsmath, amsthm, natbib; environments theorem, lemma, proposition, corollary, remark, definition numbered within section.
% Depends on: Section 2 (eq:energy, eq:family, prop:family, lem:structure, lem:majorization),
%             Section 3 (lem:concentration, eq:retrieval-cond), Section 4 (thm:cells, rem:frontier).

\section{Temporal accuracy and second-order energy-stable schemes}\label{sec:order}

Proposition~\ref{prop:family} showed that the convex splitting and exponential schemes generate identical orbits, so that any distinction between them, and more generally any ranking of the family members finer than the parameter $\theta$, must come from trajectory accuracy rather than from the equilibrium reached. This section quantifies that accuracy. Section~\ref{sec:constants} computes the leading error constants of the family and shows that the exponential member is distinguished after all; its error constant is proportional to the nonlinear curvature $\beta X H(p) X^{\top}$ and is therefore exponentially small on retrieval balls. Section~\ref{sec:barrier} shows that no reparametrization of the family can reach second order, so higher accuracy requires leaving the family. Section~\ref{sec:sav} constructs a second-order scheme that does so while retaining an unconditional discrete energy law, a scalar auxiliary variable (SAV) discretization in the sense of \citet{shen2018sav} (see also the relaxed variants of \citealp{jiang2022rsav,relaxedsav2023} and the original-energy-stable Lagrange multiplier and improved SAV schemes discussed after Theorem~\ref{thm:sav}), whose per-step cost is a single softmax evaluation and whose well-posedness is uniform in $\beta$.

\subsection{Error constants within the family}\label{sec:constants}

Write $f(x) = -\nabla E(x) = -x + Xp(x)$ for the vector field of the flow \eqref{eq:flow}, and recall $Df(x) = -\nabla^2E(x) = -I + \beta X H(p(x)) X^{\top}$.

\begin{proposition}[Consistency and leading error]\label{prop:lte}
Each member of Definition~\textup{\ref{def:schemes}} with parameter map $\theta(\Delta t) \in \{\Delta t,\ \Delta t/(1+\Delta t),\ 1-e^{-\Delta t}\}$ is consistent of order one, with local truncation error
\[
x(t+\Delta t) - \Psi_{\theta(\Delta t)}\big(x(t)\big) \;=\; \Delta t^{2}\, C(x(t)) \;+\; O(\Delta t^{3}),
\]
where the leading error constant is
\[
C_{\mathrm{EE}}(x) = \tfrac12\, Df(x)f(x), \qquad
C_{\mathrm{CS}}(x) = f(x) + \tfrac12\, Df(x)f(x), 
\]
\[
C_{\mathrm{ETD}}(x) = \tfrac12\,\big(I + Df(x)\big)f(x) = \tfrac12\,\beta X H(p(x)) X^{\top} f(x).
\]
In particular, writing $Df = -I + A$ with $\|A\| \le \ell_\mu(r)$ on $\bar{B}(\xi_\mu,r)$ by Lemma~\ref{lem:concentration}\textup{(iii)}, one has $C_{\mathrm{EE}} = \tfrac12(-I+A)f$ and $C_{\mathrm{CS}} = \tfrac12(I+A)f$, hence the two-sided window $\tfrac{1-\ell_\mu(r)}{2}\|f\| \le \|C_{\mathrm{EE}}\|,\ \|C_{\mathrm{CS}}\| \le \tfrac{1+\ell_\mu(r)}{2}\|f\|$, while $\|C_{\mathrm{ETD}}\| \le \tfrac{\ell_\mu(r)}{2}\|f\|$.
\end{proposition}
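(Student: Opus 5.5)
The plan is a direct Taylor comparison between the exact flow and the one-parameter map, using the form $\Psi_\theta(x) = x - \theta\nabla E(x) = x + \theta f(x)$ from \eqref{eq:family}. The vector field $f$ is real analytic, so the trajectory through $x = x(t)$ expands as
\[
x(t+\Delta t) = x + \Delta t\, f(x) + \tfrac12 \Delta t^2\, Df(x) f(x) + O(\Delta t^3),
\]
using $\ddot x = Df(x)\dot x = Df(x) f(x)$. The $O(\Delta t^3)$ term is uniform on compact sets, because the third derivative of the trajectory is a polynomial expression in $f$, $Df$ and $D^2 f$, all bounded on the absorbing ball of Lemma~\ref{lem:absorbing}. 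On the discrete side, $\Psi_{\theta(\Delta t)}(x) = x + \theta(\Delta t) f(x)$ is exactly affine in $\theta$. So the only expansion needed is that of the scalar parameter maps:
\[
\theta_{\mathrm{EE}} = \Delta t, \qquad
\theta_{\mathrm{CS}} = \Delta t - \Delta t^2 + O(\Delta t^3), \qquad
\theta_{\mathrm{ETD}} = \Delta t - \tfrac12\Delta t^2 + O(\Delta t^3).
\]

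Subtracting gives
\[
x(t+\Delta t) - \Psi_{\theta(\Delta t)}(x) = \bigl(\Delta t - \theta(\Delta t)\bigr) f(x) + \tfrac12 \Delta t^2\, Df(x) f(x) + O(\Delta t^3).
\]
Since each $\theta(\Delta t) = \Delta t + O(\Delta t^2)$, the $O(\Delta t)$ terms cancel, which proves consistency of order one. Reading off the $\Delta t^2$ coefficient yields the three constants:
\begin{itemize}
\item for EE, $C_{\mathrm{EE}} = \tfrac12 Df f$;
\item for CS, $C_{\mathrm{CS}} = f + \tfrac12 Df f$;
\item for ETD, $C_{\mathrm{ETD}} = \tfrac12 f + \tfrac12 Df f = \tfrac12(I + Df) f$.
\end{itemize}
Substituting $Df = -I + \beta X H(p) X^{\top}$ in the last one gives the stated softmax-curvature form $C_{\mathrm{ETD}} = \tfrac12\beta X H(p(x)) X^{\top} f(x)$.

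For the norm window, set $A = \beta X H(p) X^{\top}$. This matrix is symmetric positive semidefinite because $H(p) \succeq 0$ (Lemma~\ref{lem:structure}(b)), and $\|A\| \le \ell_\mu(r)$ on $\bar B(\xi_\mu, r)$ by Lemma~\ref{lem:concentration}(iii). Then:
\begin{itemize}
\item $C_{\mathrm{EE}} = \tfrac12(-I + A) f$, where $-I + A$ has spectrum in $[-1, -1+\ell]$;
\item $C_{\mathrm{CS}} = \tfrac12(I + A) f$, where $I + A$ has spectrum in $[1, 1+\ell]$;
\item $C_{\mathrm{ETD}} = \tfrac12 A f$.
\end{itemize}
For a symmetric matrix, the norm of its action on $f$ lies between the smallest and largest eigenvalue moduli times $\|f\|$. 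This gives $\tfrac{1-\ell}{2}\|f\| \le \|C_{\mathrm{EE}}\|, \|C_{\mathrm{CS}}\| \le \tfrac{1+\ell}{2}\|f\|$; the EE upper bound is in fact the sharper $\tfrac12\|f\|$, and the CS lower bound the sharper $\tfrac12\|f\|$. It also gives $\|C_{\mathrm{ETD}}\| \le \tfrac{\ell}{2}\|f\|$ directly.

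No step is a real obstacle. The only point needing care is the uniformity of the $O(\Delta t^3)$ remainder. I would state it for $x(t)$ ranging over a compact set, for instance the absorbing ball, on which the derivatives of the analytic field $f$ are bounded.
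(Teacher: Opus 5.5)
Your proposal is correct and follows the paper's proof: you expand the exact trajectory to second order, use that $\Psi_{\theta}(x)=x+\theta f(x)$ is affine in $\theta$, expand the three parameter maps, and read off the $\Delta t^{2}$ coefficient, with the ETD bound coming from Lemma~\ref{lem:concentration}(iii) through $I+Df=\beta XH(p)X^{\top}$. Your spectral argument for the EE/CS window is a slightly sharper version of what the paper leaves implicit; the triangle inequality $\|f\|-\|Af\|\le\|(\pm I+A)f\|\le\|f\|+\|Af\|$ already suffices and needs no positivity of $A$. For the uniform $O(\Delta t^{3})$ remainder, note that Lemma~\ref{lem:absorbing} concerns the discrete maps, not the flow, but the flow has the same positively invariant ball $\bar B(0,\max\{\|x_0\|,M\})$, so your compactness argument goes through.
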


\begin{proof}
Taylor expansion of the exact solution gives $x(t+\Delta t) = x + \Delta t f + \tfrac12\Delta t^2 Df\,f + O(\Delta t^3)$, while the scheme produces $x + \theta(\Delta t) f$. Expanding $\theta_{\mathrm{EE}} = \Delta t$, $\theta_{\mathrm{CS}} = \Delta t - \Delta t^2 + O(\Delta t^3)$ and $\theta_{\mathrm{ETD}} = \Delta t - \tfrac12\Delta t^2 + O(\Delta t^3)$ and subtracting yields the three constants. The bound for $C_{\mathrm{ETD}}$ is Lemma~\ref{lem:concentration}(iii), since $I + Df = \beta X H(p) X^{\top}$.
\end{proof}

The computation resolves the degeneracy left by Proposition~\ref{prop:family} in the only way it could be resolved. As maps, CS and ETD coincide up to relabeling of $\Delta t$; as integrators, ETD assigns to each point of the common orbit a time label that is exact on the linear part of the flow, and the residual error is carried entirely by the softmax curvature, which the concentration Lemma~\ref{lem:concentration} makes exponentially small near well-separated patterns. On retrieval balls the exponential scheme therefore tracks the continuous trajectory with an error constant smaller by the guaranteed factor $\ell_\mu(r)/(1-\ell_\mu(r)) = \ell_\mu(r) + O(\ell_\mu(r)^2)$ than either Euler variant, and in the single-pattern case $N = 1$, where $f$ is affine, it is exact. Whenever the trajectory itself carries meaning, as in the interpretation of intermediate transformer layers as intermediate times of the flow \citep{hoover2023}, or when the discrete dynamics is used to estimate continuous escape times in Section~\ref{sec:experiments}, ETD is the preferred member of the family; for pure retrieval the distinction is invisible, consistently with Section~\ref{sec:cost}.

\subsection{An order barrier for the family}\label{sec:barrier}

\begin{proposition}[Order barrier]\label{prop:barrier}
Suppose there exists a point $x_0$ with $Df(x_0)f(x_0) \notin \operatorname{span}\{f(x_0)\}$. Then no parameter map $\theta(\Delta t) = \Delta t + O(\Delta t^2)$ makes the scheme $x^{k+1} = \Psi_{\theta(\Delta t)}(x^{k})$ consistent of order two for the flow \eqref{eq:flow}; second order requires leaving the family, through additional stages, additional steps, or implicitness in the nonlinear part. The hypothesis holds for $N = 2$ whenever the patterns are linearly independent (Lemma~\ref{lem:genericity}); for $N \ge 3$ we expect it to hold generically but do not prove it.
\end{proposition}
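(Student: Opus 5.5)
The plan is to compare Taylor expansions at second order, in the same way as in the proof of Proposition~\ref{prop:lte}. Since $f$ is real analytic, the exact flow satisfies $x(t+\Delta t) = x + \Delta t\, f(x) + \tfrac12 \Delta t^2\, Df(x) f(x) + O(\Delta t^3)$. The scheme gives $\Psi_{\theta(\Delta t)}(x) = x + \theta(\Delta t)\, f(x)$ exactly, because $\Psi_\theta(x) = x - \theta\nabla E(x) = x + \theta f(x)$ by \eqref{eq:family}. Second-order consistency means that the local error is $O(\Delta t^3)$ uniformly on compact sets, and in particular at the fixed point $x_0$. So the question reduces to whether the scalar multiple $\theta(\Delta t) f(x_0)$ can reproduce the flow increment to second order.

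First I make the admissible class of parameter maps precise. The statement requires $\theta(\Delta t) = \Delta t + O(\Delta t^2)$. To talk about second-order terms I will assume a second-order expansion $\theta(\Delta t) = \Delta t + a\,\Delta t^2 + o(\Delta t^2)$. For a general map I instead work with $\limsup$/$\liminf$ of $(\theta(\Delta t) - \Delta t)/\Delta t^2$, and I also note that the relation could be allowed to depend on $x$; neither changes the argument. Subtracting the two expansions at $x_0$ gives
\[
x(\Delta t) - \Psi_{\theta(\Delta t)}(x_0) = \Delta t^2\big(\tfrac12 Df(x_0)f(x_0) - a\, f(x_0)\big) + o(\Delta t^2).
\]
For order two the bracket must vanish, which would force $Df(x_0)f(x_0) = 2a f(x_0) \in \operatorname{span}\{f(x_0)\}$ and contradict the hypothesis. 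The same conclusion holds without assuming the expansion exists. Divide the local error by $\Delta t^2$, project onto the orthogonal complement of $f(x_0)$, and let $\Delta t \to 0$. The scheme contributes nothing to that component, so the projected error tends to $\tfrac12 \Pi^\perp Df(x_0) f(x_0) \ne 0$. This already finishes the barrier, and it stays valid even if $\theta$ is allowed to depend on $x$, because it is evaluated at the single point $x_0$.

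The main obstacle is the genericity claim for $N = 2$, which is the content of Lemma~\ref{lem:genericity}; the barrier itself is a short projection argument. There I would write $Df = -I + A$ with $A(x) = \beta X H(p) X^\top$. For $N = 2$ we have $H(p) = p_1 p_2\, (e_1 - e_2)(e_1 - e_2)^\top$, so $A(x) = \beta p_1 p_2\, u u^\top$ with $u = \xi_1 - \xi_2$. The condition $Df f \in \operatorname{span}\{f\}$ is then equivalent to $A f \parallel f$. Since $A$ has rank one, this holds only if $f \parallel u$ or $u^\top f = 0$, because $p_1p_2 > 0$ everywhere. It therefore suffices to exhibit a point $x_0$ at which $f(x_0) = -x_0 + p_1\xi_1 + p_2\xi_2$ is neither parallel nor orthogonal to $u$. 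I would take $x_0$ in the plane spanned by $\xi_1,\xi_2$, which is two-dimensional by linear independence. In the coordinates $(\xi_1,\xi_2)$, $f(x_0) = (p_1 - s_1)\xi_1 + (p_2 - s_2)\xi_2$ for $x_0 = s_1\xi_1 + s_2\xi_2$. Both bad conditions are single nontrivial analytic equations in $(s_1,s_2)$. To check nontriviality I would take $s_2 = 0$ and $s_1$ large, where $f \approx -s_1\xi_1$, which is neither parallel to $u$ nor orthogonal to it unless $\xi_1^\top u = 0$; in that degenerate case I would take $s_1 = 0$ and $s_2$ large instead, since $\xi_1^\top u$ and $\xi_2^\top u$ cannot both vanish when $u \ne 0$. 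Each bad set is then a proper analytic subset, and a point outside both exists. I would not attempt the case $N \ge 3$, as the statement indicates.
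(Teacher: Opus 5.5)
Your proposal is correct. For the barrier itself you follow the paper's argument: the scheme's increment $\theta(\Delta t)f(x_0)$ stays in $\operatorname{span}\{f(x_0)\}$, while the flow's $\Delta t^2$ coefficient is $\tfrac12 Df(x_0)f(x_0)$. Your projection onto $f(x_0)^{\perp}$ is a slightly more robust version of the paper's one-line proof, since it needs no second-order expansion of $\theta(\Delta t)$ and also covers state-dependent $\theta$.

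The genuine difference is in the $N=2$ genericity claim of Lemma~\ref{lem:genericity}. The paper argues nonconstructively in all of $\mathbb{R}^d$. If the closed sets $\{\langle v,f\rangle=0\}$ and $\{f\wedge v=0\}$ covered $\mathbb{R}^d$, Baire would give an open set inside one of them. The first case is excluded by differentiating $\langle v,f\rangle$ and using that $p_1p_2$ is non-constant along $v$. The second is excluded by real analyticity, which propagates $f\wedge v\equiv0$ to all of $\mathbb{R}^d$, and by evaluation at $x=0$, where $f(0)=\tfrac12(\xi_1+\xi_2)$ is not parallel to $\xi_1-\xi_2$.

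You instead restrict to the plane $\operatorname{span}\{\xi_1,\xi_2\}$. There $p_1+p_2=1$ turns parallelism to $u$ into the affine condition $s_1+s_2=1$. You then take points far out along $\xi_1$ (or along $\xi_2$, whichever has $\xi_i^{\top}u\neq0$), where $u^{\top}f\neq0$. These witnesses already violate both bad conditions at once, so you can drop the appeal to proper analytic subsets and simply present $x_0=s\,\xi_1$ (or $s\,\xi_2$) with $s$ large.

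Your route is then fully explicit and uses neither Baire category nor analyticity. The paper's case analysis gives a bit more as a by-product: both bad sets have empty interior, so the set of points where the hypothesis holds is open and dense in $\mathbb{R}^d$.
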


\begin{proof}
The order-two condition requires the $\Delta t^2$ term of the scheme, which by the expansion above lies in $\operatorname{span}\{f(x)\}$ for every admissible $\theta(\cdot)$, to equal $\tfrac12 Df(x) f(x)$ for all $x$; at $x_0$ this is impossible.
\end{proof}

\begin{lemma}[Genericity for two patterns]\label{lem:genericity}
Let $N = 2$ and let $\xi_1, \xi_2$ be linearly independent. Then there exists $x_0$ with $Df(x_0)f(x_0) \notin \operatorname{span}\{f(x_0)\}$. The proof is given in Appendix~\ref{app:barrier}.
\end{lemma}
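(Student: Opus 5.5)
The plan is to exploit the rank-one structure of the softmax Jacobian for $N=2$. With $H(p)=\operatorname{diag}(p)-pp^{\top}$ and $p=(p_1,p_2)$, one has $H(p)=p_1p_2\,(e_1-e_2)(e_1-e_2)^{\top}$. Hence, writing $u=\xi_1-\xi_2$,
\[
Df(x)=-I+\beta\,p_1(x)p_2(x)\,uu^{\top},
\qquad
Df(x)f(x)=-f(x)+\beta\,p_1p_2\,\langle u,f(x)\rangle\,u .
\]
Since $\beta p_1p_2>0$ everywhere, $Df(x_0)f(x_0)\notin\operatorname{span}\{f(x_0)\}$ holds exactly when three conditions hold at $x_0$: $f(x_0)\neq0$, $\langle u,f(x_0)\rangle\neq0$, and $u\notin\operatorname{span}\{f(x_0)\}$. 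So the task reduces to exhibiting one point where these three conditions hold.

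I will search along a line $x=a\,\xi_j$, with $a\in\mathbb{R}$ and $j\in\{1,2\}$ chosen below. Take $j=1$ for definiteness. Then
\[
f(a\xi_1)=(p_1-a)\,\xi_1+p_2\,\xi_2 ,
\]
written in the basis $\{\xi_1,\xi_2\}$ of the plane they span; this is legitimate because the patterns are linearly independent.

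Nonvanishing is immediate, since the $\xi_2$-coefficient is $p_2>0$. The vector $f$ is parallel to $u=\xi_1-\xi_2$ iff the coefficients satisfy $p_1-a=-p_2$, that is, iff $a=1$, using $p_1+p_2=1$. So every $a\neq1$ gives non-parallelism. The case $j=2$ is symmetric.

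The remaining condition, $\langle u,f\rangle\neq0$, is the only point needing care, and I would handle it by letting $|a|\to\infty$. We have
\[
\langle u,f(a\xi_j)\rangle=-a\,\langle u,\xi_j\rangle+\langle u,Xp(a\xi_j)\rangle ,
\]
and the second term is bounded by $\|u\|M$. So whenever $\langle u,\xi_j\rangle\neq0$, this quantity is nonzero for all sufficiently large $|a|$, and in particular for some $a\neq1$.

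It then only remains to show that $\langle u,\xi_j\rangle\neq0$ for some $j$. Suppose instead that $\langle u,\xi_1\rangle=\langle u,\xi_2\rangle=0$. This says $\|\xi_1\|^2=\xi_1^{\top}\xi_2=\|\xi_2\|^2$, which gives $\|\xi_1-\xi_2\|^2=0$. Then $\xi_1=\xi_2$, contradicting linear independence. Choosing that $j$ and a large $|a|$ produces the required $x_0$.

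I expect no real obstacle. The one subtle step is this final choice of $j$, which replaces what could otherwise be a delicate analysis of zeros of an analytic function of $a$ by a simple growth argument.
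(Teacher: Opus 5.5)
Your proof is correct, and it takes a genuinely different route from the paper's. Both arguments start from the same rank-one formula $Df(x)f(x)=-f(x)+\beta p_1p_2\langle u,f(x)\rangle u$ and the same reduction to the two conditions $f\not\perp u$ and $f\not\parallel u$. You treat $f=0$ explicitly, whereas the paper's dichotomy absorbs that case silently.

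The paper then argues nonconstructively, by contradiction. If the two closed sets $\{\langle u,f\rangle=0\}$ and $\{f\wedge u=0\}$ covered $\mathbb{R}^d$, Baire category would give one of them nonempty interior. Interior of the first is excluded by differentiating to get $Df^{\top}u=0$, which would force $p_1p_2$ to be constant along segments in the direction $u$. Interior of the second is excluded by real analyticity: it would propagate $f\parallel u$ to all of $\mathbb{R}^d$, and this fails at $x=0$, where $f(0)=\tfrac12(\xi_1+\xi_2)$.

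You instead restrict to the lines $a\xi_j$. There, non-parallelism is an exact coefficient computation in the basis $\{\xi_1,\xi_2\}$ that fails only at $a=1$. You obtain $\langle u,f\rangle\neq0$ from the linear growth of $-a\langle u,\xi_j\rangle$ against the bound $\|u\|M$. A usable $j$ exists because $\langle u,\xi_1\rangle-\langle u,\xi_2\rangle=\|u\|^2>0$, which is a one-line replacement for your final contradiction step.

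Your route yields an explicit witness: any $x_0=a\xi_j$ with $|a|>\max\{1,\|u\|M/|\langle u,\xi_j\rangle|\}$. It uses neither Baire category nor analyticity.

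The paper's argument, read directly, shows more: both bad sets have empty interior, so the good set is open and dense. That is the sense of ``genericity'' in the lemma's title, and it is the form one would want for the conjectured case $N\ge3$. For the order barrier of Proposition~\ref{prop:barrier}, which needs only a single point, either argument suffices.
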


The barrier is elementary but consequential when combined with Remark~\ref{rem:frontier}: the schemes that certify basins in Section~\ref{sec:basins} are first order, and generic second-order integrators, such as Heun or the implicit midpoint rule applied to \eqref{eq:flow}, do not in general furnish the monotone interpolant or the unconditional true-energy law that power Theorem~\ref{thm:cells}. The scheme of the next subsection is designed to give up neither.

\subsection{A second-order SAV discretization}\label{sec:sav}

Rather than treating the full log-sum-exp as the SAV nonlinearity, which is unbounded below and would force an artificial truncation outside an absorbing set, we resplit the energy. For a fixed $\gamma \in (0,1)$, write
\[
E(x) \;=\; \frac{1-\gamma}{2}\,\|x\|^2 \;+\; E_1^{\gamma}(x),
\qquad
E_1^{\gamma}(x) \;=\; \frac{\gamma}{2}\,\|x\|^2 \;-\; \operatorname{lse}_\beta\big(X^{\top}x\big).
\]
Since $\operatorname{lse}_\beta(X^{\top}x) \le M\|x\| + \beta^{-1}\log N$, completing the square gives $E_1^{\gamma}(x) + C_0 \ge \big(\sqrt{\gamma/2}\,\|x\| - M/\sqrt{2\gamma}\,\big)^2 + 1 \ge 1$ on all of $\mathbb{R}^d$ for the explicit constant $C_0 = M^2/(2\gamma) + \beta^{-1}\log N + 1$: the auxiliary variable $r \approx \sqrt{E_1^{\gamma}(x)+C_0}$ and the vector $b(x) = \nabla E_1^{\gamma}(x)/\sqrt{E_1^{\gamma}(x)+C_0}$, with $\nabla E_1^{\gamma}(x) = \gamma x - Xp(x)$, are globally defined with no truncation. Moreover $\|b(x)\| \to \sqrt{2\gamma}$ as $\|x\| \to \infty$, so $b$ is globally bounded and Lipschitz, the fact on which the convergence proof of Appendix~\ref{app:sav} rests, and the single expensive ingredient of $b$ remains one softmax evaluation. We take $\gamma = \tfrac12$ in the experiments.

\begin{definition}[SAV--CN scheme]\label{def:sav}
Given $x^{k-1}, x^{k}, r^{k}$, set $\bar{x}^{k+\frac12} = \tfrac32 x^{k} - \tfrac12 x^{k-1}$ and $b^{k} = b(\bar{x}^{k+\frac12})$, and define $(x^{k+1}, r^{k+1})$ by
\begin{equation}\label{eq:sav}
\frac{x^{k+1}-x^{k}}{\Delta t} \;=\; -\,(1-\gamma)\,\frac{x^{k+1}+x^{k}}{2} \;-\; b^{k}\,\frac{r^{k+1}+r^{k}}{2},
\qquad\qquad
r^{k+1}-r^{k} \;=\; \tfrac12\,\big\langle b^{k},\, x^{k+1}-x^{k}\big\rangle,
\end{equation}
initialized with $r^{0} = \sqrt{E_1^{\gamma}(x^{0})+C_0}$, one startup step of the ETD scheme for $x^{1}$, and $r^{1} = \sqrt{E_1^{\gamma}(x^{1})+C_0}$, which preserves the second-order accuracy of the startup.
\end{definition}

\begin{theorem}[Unconditional stability, solvability, and second-order convergence of SAV--CN]\label{thm:sav}
For every $\Delta t > 0$ and every SAV step $k \ge 1$, the system \eqref{eq:sav} has a unique solution, computable in closed form at the cost of one softmax evaluation plus $O(d)$ operations, by eliminating $r^{k+1}$ and solving the rank-one perturbed system $\big[(\tfrac{1}{\Delta t} + \tfrac{1-\gamma}{2}) I + \tfrac14\, b^{k} (b^{k})^{\top}\big] x^{k+1} = g^{k}$ with the Sherman--Morrison formula, where $g^{k} = \big[(\tfrac{1}{\Delta t} - \tfrac{1-\gamma}{2}) I + \tfrac14\, b^{k} (b^{k})^{\top}\big] x^{k} - b^{k} r^{k}$ and the matrix is symmetric positive definite. The modified energy $\tilde{E}^{k} = \tfrac{1-\gamma}{2}\|x^{k}\|^2 + (r^{k})^2 - C_0$ obeys the exact discrete law
\begin{equation}\label{eq:sav-law}
\tilde{E}^{k+1} \;=\; \tilde{E}^{k} \;-\; \frac{1}{\Delta t}\,\big\|x^{k+1}-x^{k}\big\|^{2}
\qquad\text{for every } \Delta t > 0,
\end{equation}
which in particular yields the unconditional a priori bound $\tfrac{1-\gamma}{2}\|x^{k}\|^2 \le \tilde{E}^{1} + C_0$ for all $k\ge1$. Solvability and the energy law hold for every step size. Moreover, for every fixed $T>0$, as $\Delta t\to0$ the numerical solution of the extended system satisfies $\max_{0\le k\le T/\Delta t}(\|x^k-x(t_k)\|+|r^k-r(t_k)|)\le C_T\Delta t^2$, with $C_T$ independent of $\Delta t$; see Appendix~\ref{app:sav}.
\end{theorem}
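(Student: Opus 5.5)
The proof splits into three parts of increasing difficulty: the algebra of solvability, the exact energy law, and the second-order error estimate.

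\textbf{Solvability.} The second equation of \eqref{eq:sav} gives $r^{k+1} = r^{k} + \tfrac12\langle b^{k}, x^{k+1}-x^{k}\rangle$. Hence
\[
\tfrac{r^{k+1}+r^{k}}{2} = r^{k} + \tfrac14\langle b^{k}, x^{k+1}-x^{k}\rangle .
\]
Substituting this into the first equation and multiplying out gives exactly the rank-one perturbed system $\big[(\tfrac1{\Delta t}+\tfrac{1-\gamma}2)I + \tfrac14 b^{k}(b^{k})^{\top}\big]x^{k+1} = g^{k}$ with the stated $g^{k}$. The matrix is a positive multiple of $I$ plus a positive semidefinite rank-one term. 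It is therefore symmetric positive definite for every $\Delta t>0$ and $\gamma<1$, and Sherman--Morrison inverts it in $O(d)$ operations. The only expensive ingredient is $b^{k} = b(\bar x^{k+1/2})$, which costs one softmax evaluation. Uniqueness of $r^{k+1}$ then follows from the explicit formula.

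\textbf{Energy law.} Take the inner product of the first equation with $x^{k+1}-x^{k}$. The linear term gives $-\tfrac{1-\gamma}{2}\big(\|x^{k+1}\|^2-\|x^{k}\|^2\big)$. The auxiliary term gives $-\tfrac{r^{k+1}+r^k}{2}\langle b^k,x^{k+1}-x^k\rangle = -(r^{k+1}+r^k)(r^{k+1}-r^k) = -\big((r^{k+1})^2-(r^k)^2\big)$, using the second equation. Rearranging yields \eqref{eq:sav-law} exactly. Summing from $k=1$ gives $\tilde E^{k}\le \tilde E^{1}$. Since $(r^k)^2\ge0$, this implies $\tfrac{1-\gamma}2\|x^k\|^2 \le \tilde E^1 + C_0$.

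\textbf{Convergence.} This is the main obstacle. I would follow the standard SAV error analysis of \citet{shen2018sav}, adapted to Crank--Nicolson with extrapolation. The continuous extended system is $\dot x = -(1-\gamma)x - b(x)r$, $\dot r = \tfrac12\langle b(x),\dot x\rangle$, with $r(t)=\sqrt{E_1^\gamma(x(t))+C_0}$.

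First I would compute the truncation errors. The scheme is centred at $t_{k+1/2}$, and the extrapolation satisfies $\bar x^{k+1/2} - x(t_{k+1/2}) = O(\Delta t^2)$. Smoothness of $x(t)$ on $[0,T]$ follows from analyticity of $E$ and the absorbing ball. Together these give consistency residuals of order $O(\Delta t^2)$ in both equations.

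Next, write $e^k = x^k - x(t_k)$ and $s^k = r^k - r(t_k)$ and subtract the exact relations. I would test the $e$-equation with $(e^{k+1}+e^k)/2$ and the $s$-equation with $2\cdot(s^{k+1}+s^k)/2$. The cross terms $\tfrac{s^{k+1}+s^k}{2}\langle b^k, e^{k+1}-e^k\rangle$ then cancel, mirroring the energy-law computation. The remaining terms involve $b^k - b(x(t_{k+1/2}))$. These are controlled by the global Lipschitz bound on $b$ noted before Definition~\ref{def:sav}, which gives $\|b^k - b(x(t_{k+1/2}))\| \le L\big(\tfrac32\|e^k\|+\tfrac12\|e^{k-1}\|\big) + O(\Delta t^2)$. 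They multiply $r(t)$ and $\dot x(t)$, both of which are bounded.

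One difficulty is a term like $s^{k}\langle b^k - b(x),\dot x\rangle\Delta t$, where $\dot x$ is replaced by a difference quotient. I expect it to be absorbed by Young's inequality into $\Delta t(\|e\|^2 + |s|^2)$. The numerical $x^k$ stays bounded by the a priori bound, which is where the energy law enters the analysis. A discrete Gronwall inequality then gives $\|e^k\|^2+|s^k|^2 \le C_T(\|e^1\|^2+|s^1|^2+\Delta t^4)$. The ETD startup has local error $O(\Delta t^2)$ by Proposition~\ref{prop:lte}, and $r^1$ is consistent with it. Taking square roots gives the claimed $C_T\Delta t^2$ bound.
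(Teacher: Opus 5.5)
Your solvability and energy-law arguments coincide with the paper's. For the error estimate you take a different route. The paper uses no test functions. It eliminates $e_r^{k+1}$ as in the solvability step and uses the fact that the system matrix satisfies $\mathcal{A}^{k}\succeq(\tfrac1{\Delta t}+\tfrac{1-\gamma}{2})I$, so $\|(\mathcal{A}^{k})^{-1}\|\le\Delta t$. It bounds the frozen-coefficient difference by $L_b(\tfrac32\|e_x^k\|+\tfrac12\|e_x^{k-1}\|)$. This gives the unsquared recursion $\eta^{k+1}\le(1+C\Delta t)\eta^{k}+C\Delta t\,\eta^{k-1}+C\Delta t^{3}$ for $\eta^k=\|e_x^k\|+|e_r^k|$, which a two-step Gr\"onwall inequality closes. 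That brute-force argument works because the system is finite dimensional, its linear part $-(1-\gamma)I$ is bounded, and $b$ is globally bounded and Lipschitz. Your energy-method argument comes from the PDE SAV literature and relies on the cancellation structure of the scheme instead. It costs a slightly longer computation, but it would survive an unbounded linear part.

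There is one concrete slip in your version. Testing the $e$-equation with $(e^{k+1}+e^k)/2$ produces the cross term $\tfrac{s^{k+1}+s^k}{2}\langle b^k,\tfrac{e^{k+1}+e^k}{2}\rangle$, not $\tfrac{s^{k+1}+s^k}{2}\langle b^k,e^{k+1}-e^k\rangle$, so nothing cancels against the $s$-equation. To mirror the energy law you must test with $e^{k+1}-e^k$, exactly as you did for \eqref{eq:sav-law}. The cross terms then cancel, and the left side becomes $\tfrac{1}{\Delta t}\|e^{k+1}-e^k\|^2+\tfrac{1-\gamma}{2}(\|e^{k+1}\|^2-\|e^k\|^2)+(s^{k+1})^2-(s^k)^2$. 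The term $\tfrac{r(t_{k+1})+r(t_k)}{2}\langle b^k-b(x(t_{k+1/2})),e^{k+1}-e^k\rangle$ is absorbed into $\tfrac{1}{\Delta t}\|e^{k+1}-e^k\|^2$ by Young's inequality, leaving $C\Delta t(\|e^k\|^2+\|e^{k-1}\|^2)$.

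Keeping your test function would also work here, but only by bounding the non-cancelling terms directly. For that you use $\|e^{k+1}-e^k\|\le C\Delta t(\|e^{k+1}\|+\|e^k\|+\|e^{k-1}\|+|s^{k+1}|+|s^k|+\Delta t^2)$ from the error equation, which is the paper's observation that no cancellation is needed.

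Finally, you can split the nonlinear differences so that the coefficient difference multiplies only exact quantities, namely $r(t)$ and $x(t_{k+1})-x(t_k)$. Then the global bound on $b$ suffices and the a priori bound on the numerical solution is not needed. The paper, by contrast, does invoke the discrete law to bound $r^k$.
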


\begin{proof}
Taking the inner product of the first equation of \eqref{eq:sav} with $x^{k+1}-x^{k}$ and using the identities $\langle \tfrac{1-\gamma}{2}(x^{k+1}+x^{k}), x^{k+1}-x^{k}\rangle = \tfrac{1-\gamma}{2}(\|x^{k+1}\|^2 - \|x^{k}\|^2)$ and, by the second equation, $\langle b^{k}\tfrac12(r^{k+1}+r^{k}), x^{k+1}-x^{k}\rangle = (r^{k+1})^2 - (r^{k})^2$, gives \eqref{eq:sav-law} after rearrangement; the a priori bound follows since $(r^{k})^2 \ge 0$ and $\tilde{E}^{k}$ is nonincreasing. For solvability, substituting $r^{k+1} = r^{k} + \tfrac12\langle b^{k}, x^{k+1}-x^{k}\rangle$ into the first equation produces the stated linear system with a symmetric positive definite matrix, invertible in $O(d)$ by Sherman--Morrison. Consistency and convergence are proved in Appendix~\ref{app:sav}, using the extended system $\dot{x} = -(1-\gamma)x - b(x)\,r$, $\dot{r} = \tfrac12\langle b(x), \dot{x}\rangle$, which along solutions initialized with $r(0) = \sqrt{E_1^{\gamma}(x(0))+C_0}$ satisfies $r(t) = \sqrt{E_1^{\gamma}(x(t))+C_0}$ identically, so that $b(x)\,r = \nabla E_1^{\gamma}(x)$ and the system reduces to the flow \eqref{eq:flow}.
\end{proof}

Three remarks delimit what Theorem~\ref{thm:sav} does and does not achieve. First, the dissipated quantity is the modified energy $\tilde{E}^{k}$, in which $(r^{k})^2$ tracks $E_1^{\gamma}(x^{k})+C_0$ only up to a consistency error of order $\Delta t^2$; the true energy $E(x^{k})$ need not decrease, and Campaign~F of Section~\ref{sec:experiments} shows that at large steps it genuinely does not. Second-order schemes that dissipate the true energy of a gradient flow exactly are classical; the discrete-gradient methods of \citet{gonzalez1996} and \citet{mqr1999} and the average vector field method \citep{celledoni2012} satisfy $E(x^{k+1}) - E(x^{k}) = -\tfrac{1}{\Delta t}\|x^{k+1}-x^{k}\|^2$ by construction. For the energy \eqref{eq:energy} the midpoint discrete gradient of \citet{gonzalez1996} is implementable at the cost of one softmax-gradient evaluation plus two energy evaluations per step, each energy costing a single product with $X^{\top}$, and it is the natural exact-dissipation benchmark for SAV; it is, however, fully implicit, and whether its steps admit a monotone interpolant in the sense of Section~\ref{sec:basins}, so that Theorem~\ref{thm:cells} extends to it, is not known to us. Within the SAV family itself, relaxation techniques \citep{jiang2022rsav,relaxedsav2023} bring the modified energy closer to the true one, and Lagrange-multiplier and improved SAV variants restore stability of the \emph{original} energy while remaining linear, at the price, in the Lagrange-multiplier case, of a step-size restriction for solvability; we have not tested them here. The open problem left by this section is therefore not existence but combination; a scheme achieving simultaneously second order, exact dissipation of $E$ for every step size, a monotone interpolant, linear implicitness, and one softmax evaluation per step. Second, well-posedness is uniform in $\beta$; the linear system of Theorem~\ref{thm:sav} involves $\beta$ only through the bounded vector $b^{k}$, whereas second-order convex splitting schemes of secant type require an explicit stabilization proportional to the curvature bound $\tfrac{\beta}{2}\sigma_{\max}^2(X)$ of Lemma~\ref{lem:structure}(b) and hence degrade in the large-$\beta$ retrieval regime; this uniformity is the reason we adopt SAV rather than stabilized splitting as the reference second-order scheme. Third, the per-step cost equals that of one attention step, so the accuracy gain of order two is obtained at no cost in the accounting unit of the next section.

% --- end of Section 5 ---

% Section 6 -- draft v1
% Assumes: amsmath, amsthm, natbib; environments theorem, lemma, proposition, corollary, remark numbered within section.
% Depends on: Section 2 (def:schemes, prop:family), Section 3 (thm:local-family, thm:local-ie, lem:concentration, eq:retrieval-cond, rem:two-limits), Section 5 (thm:sav, prop:lte).

\section{Cost per softmax evaluation}\label{sec:cost}

The results so far rank the schemes by contraction per step, but the steps of different schemes have different prices, and Remark~\ref{rem:two-limits} already announced that the ranking changes when the accounting unit is normalized. This section fixes the unit, the evaluation
\[
x \;\longmapsto\; Xp(x) = X\operatorname{softmax}(\beta X^{\top}x),
\]
whose cost is two matrix--vector products with $X$ plus a length-$N$ softmax, about $4Nd$ floating point operations for a single query and a pair of GEMMs for a batch of queries, and expresses every scheme's convergence in error reduction per evaluation. All other arithmetic in the schemes of this paper is $O(d)$ or $O(N)$ per step and is neglected. Throughout this section the retrieval condition \eqref{eq:retrieval-cond} is in force on a ball $\bar{B}(\xi_\mu, r)$, $\ell = \ell_\mu(r)$, and rates are per evaluation in the sense of the factor by which the distance to the attractor $x^\ast_\mu$ is guaranteed to shrink each time $Xp(\cdot)$ is computed.

\subsection{The inexact implicit hierarchy}\label{sec:hierarchy}

The implicit scheme is computed in practice by an inner solver, and the first observation is that the resulting inexact schemes are not new objects but members of the structures already analyzed.

\begin{proposition}[One inner iteration is convex splitting]\label{prop:one-picard}
Let the implicit step \eqref{eq:ie} be approximated by $m$ iterations of the inner map $T_{x}(y) = (x + \Delta t\, Xp(y))/(1+\Delta t)$ of Theorem~\textup{\ref{thm:local-ie}}, warm started at $y_0 = x^{k}$. For $m = 1$ the resulting scheme is exactly the convex splitting scheme \eqref{eq:cs}, that is, $\Psi_{\theta}$ with $\theta = \Delta t/(1+\Delta t)$; for $m \to \infty$ it converges to the exact implicit scheme. The inexact implicit family thus interpolates between the damped attention family and the deep equilibrium limit; each inner iteration evaluates $Xp(\cdot)$ once and mixes it with the frozen anchor $x^{k}$, $T_{x^{k}}(y) = (1-\tau)\,x^{k} + \tau\, Xp(y)$ with $\tau = \Delta t/(1+\Delta t)$, and only the first, warm-started iteration coincides with a member of the family $\Psi_\theta$.
\end{proposition}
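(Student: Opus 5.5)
The plan is to verify the three claims of Proposition~\ref{prop:one-picard} in order, each by direct algebra on the inner map together with the contraction statement of Theorem~\ref{thm:local-ie}.

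\emph{The case $m=1$.} With the warm start $y_0 = x^{k}$ we have
\[
y_1 = T_{x^{k}}(x^{k}) = \frac{x^{k} + \Delta t\, Xp(x^{k})}{1+\Delta t}.
\]
This is the right-hand side of \eqref{eq:cs} verbatim. By Proposition~\ref{prop:family} it equals $\Psi_\theta(x^{k})$ with $\theta = \theta_{\mathrm{CS}} = \Delta t/(1+\Delta t)$. Writing $\tau = \Delta t/(1+\Delta t)$ and $1-\tau = 1/(1+\Delta t)$ gives the mixing form $T_{x^{k}}(y) = (1-\tau)\,x^{k} + \tau\, Xp(y)$, which shows that each inner iteration costs exactly one evaluation of $Xp(\cdot)$.

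\emph{The limit $m\to\infty$.} Under the standing retrieval condition \eqref{eq:retrieval-cond} and for $x^{k}$ in the retrieval ball, Theorem~\ref{thm:local-ie} states that $T_{x^{k}}$ maps $\bar B(\xi_\mu,r)$ into itself and contracts it with factor $\tau\ell_\mu(r)<1$. The start $y_0 = x^{k}$ lies in the ball, so Banach's theorem gives
\[
\|y_m - \Phi_{\Delta t}(x^{k})\| \le (\tau\ell)^m \|x^{k} - \Phi_{\Delta t}(x^{k})\| \to 0 .
\]
Thus the inexact scheme converges geometrically to the local branch of the exact implicit scheme. If one instead wants convergence to the global implicit map outside the retrieval ball, this holds whenever $T_{x^{k}}$ is a global contraction, i.e.\ when $\tau\,\beta\sigma_{\max}^2(X)/2 < 1$ by Lemma~\ref{lem:structure}(b). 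In that regime the limit coincides with the unique solution of \eqref{eq:ie}. I would state the convergence claim relative to the local branch, which is the setting this section fixes.

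\emph{Only the first iterate lies in the family.} For $m \ge 2$,
\[
y_m = (1-\tau)x^{k} + \tau\, Xp(y_{m-1}),
\]
where the softmax is evaluated at $y_{m-1} \ne x^{k}$ in general. This is not of the form $(1-\theta)x^{k} + \theta\, Xp(x^{k})$. Making the claim rigorous is the main obstacle, because it is a non-membership statement that needs a witness. I would take the $N=2$ example of Lemma~\ref{lem:genericity} and argue as follows. Suppose $y_2 = \Psi_\theta(x^{k})$ held for some $\theta$ and all $x^{k}$. Expanding in small $\Delta t$, the $\Delta t^2$ term of $y_2$ contains $Df\,f$, whereas the $\Delta t^2$ term of any $\Psi_{\theta(\Delta t)}$ lies in $\operatorname{span}\{f\}$. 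This contradicts the hypothesis of Proposition~\ref{prop:barrier} at the point $x_0$.

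Alternatively, it suffices to read the claim as structural: the anchor stays frozen at $x^{k}$ while the evaluation point moves. That reading is immediate from the displayed recursion.
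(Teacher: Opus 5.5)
Your proposal is correct and follows the paper's proof, which consists of exactly your $m=1$ computation and an appeal to Theorem~\ref{thm:local-ie} for $m\to\infty$. Your global variant is also right: $\tau\beta\sigma_{\max}^2(X)/2<1$ is equivalent to $\Delta t\,\nu<1$, so it covers precisely the uniqueness regime.

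The paper does not prove the ``only the first iteration'' clause beyond the structural reading you offer as your alternative, so your order-expansion witness goes further than the paper. It is valid; in fact $y_m=x^k+\Delta t\,f(x^k)+\Delta t^2\,Df(x^k)f(x^k)+O(\Delta t^3)$ for every $m\ge2$. However, it gives non-membership only for small $\Delta t$ and only for data satisfying the hypothesis of Proposition~\ref{prop:barrier}. Some such hypothesis is unavoidable, since for $N=1$ the inner map is constant in $y$ and every $y_m$ equals $y_1$.
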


\begin{proof}
With $y_0 = x^{k}$, one application of $T_{x^{k}}$ gives $y_1 = (x^{k} + \Delta t\, Xp(x^{k}))/(1+\Delta t)$, which is \eqref{eq:cs}; the convergence of $y_m$ to the implicit step is Theorem~\ref{thm:local-ie}.
\end{proof}

\begin{proposition}[Closed-form certified factors of the Picard hierarchy]\label{prop:matched}
Under the assumptions of Theorem~\ref{thm:local-ie}, write $\tau = \Delta t/(1+\Delta t)$ and $\kappa = \kappa_{\Delta t}$, and let $\hat{\Phi}_{m}$ denote the inexact implicit step computed by $m$ warm-started inner iterations as in Proposition~\ref{prop:one-picard}. Then $\hat{\Phi}_{m}$ is Lipschitz on $\bar{B}(\xi_\mu, r)$ with certified factor $a_m$ obeying
\[
a_0 = 1, \qquad a_{m+1} = (1-\tau) + \tau\ell\, a_m, \qquad\text{whence}\qquad a_m = \kappa + (1-\kappa)\,(\tau\ell)^{m},
\]
strictly decreasing in $m$ from $a_1 = 1 - \tau(1-\ell)$, the convex splitting factor $q_{\tau}$ of Theorem~\ref{thm:local-family}, to $a_\infty = \kappa$, the local-branch factor of Theorem~\ref{thm:local-ie}. Moreover $a_m \ge \ell^{m}$ for every $m \ge 0$. Consequently $m$ evaluations spent inside one inexact implicit step certify a contraction no better than $\ell^{m}$, so that, normalized per softmax evaluation, no member of the warm-started Picard hierarchy carries a better certified factor than the attention bound $q_1 = \ell$ of Theorem~\ref{thm:local-family}. The comparison concerns certified worst-case bounds within this hierarchy; no claim is made about other inner solvers or about algorithm-level lower bounds.
\end{proposition}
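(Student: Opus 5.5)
The plan is to prove the recursion by induction on $m$, working with two starting points $x, x' \in \bar{B}(\xi_\mu,r)$ and their warm-started inner iterates $y_j, y'_j$, with $y_0 = x$ and $y'_0 = x'$. Theorem~\ref{thm:local-ie} shows that $T_x$ maps the ball into itself whenever $x$ lies in it. Hence all inner iterates remain in $\bar{B}(\xi_\mu,r)$, and on that ball $Xp(\cdot)$ is $\ell$-Lipschitz by Lemma~\ref{lem:concentration}(iii). Suppose $\|y_m - y'_m\| \le a_m\|x-x'\|$. Writing $y_{m+1} = (1-\tau)x + \tau Xp(y_m)$ and using the triangle inequality gives
\[
\|y_{m+1}-y'_{m+1}\| \le (1-\tau)\|x-x'\| + \tau\ell\, a_m\|x-x'\|,
\]
which is the stated recursion. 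The base case $a_0 = 1$ holds because $y_0 = x$.

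Next comes the closed form. The fixed point $a$ of the affine recursion solves $a = (1-\tau) + \tau\ell a$, so $a = (1-\tau)/(1-\tau\ell)$. Since $1-\tau = 1/(1+\Delta t)$ and $1-\tau\ell = (1+\Delta t(1-\ell))/(1+\Delta t)$, this equals $1/(1+\Delta t(1-\ell)) = \kappa$. Then $a_m - \kappa = (\tau\ell)^m(a_0-\kappa)$ gives the formula. It is strictly decreasing in $m$ because $\tau\ell \in (0,1)$ and $1-\kappa > 0$. Substituting $m = 1$ gives $a_1 = \kappa + (1-\kappa)\tau\ell$. I would check that this equals $1-\tau(1-\ell)$ directly from the recursion, since $a_1 = (1-\tau)+\tau\ell$. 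This matches $q_\tau$ of Theorem~\ref{thm:local-family}, and $a_\infty = \kappa$ is the factor of Theorem~\ref{thm:local-ie}.

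The main point is the lower bound $a_m \ge \ell^m$, which I would also prove by induction. The case $m=0$ is trivial. Assuming $a_m \ge \ell^m$,
\[
a_{m+1} \ge (1-\tau) + \tau\ell^{m+1} \ge \ell^{m+1},
\]
because $(1-\tau) + \tau\ell^{m+1}$ is a convex combination of $1$ and $\ell^{m+1}$ and $\ell^{m+1} \le 1$.

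Finally, for the per-evaluation normalization, one inexact step with $m$ inner iterations costs $m$ evaluations of $Xp$. Its certified factor per evaluation is therefore $a_m^{1/m} \ge \ell$. Composing outer steps multiplies these certified factors, and each outer step has certified per-evaluation factor at least $\ell$. So no allocation of inner iterations, whether fixed or varying across outer steps, beats $q_1 = \ell$ per evaluation. I expect no real obstacle here; the only subtle point is confirming invariance of the ball under the inner iterates for both starting points, which Theorem~\ref{thm:local-ie} supplies. I would also state explicitly that these are comparisons of certified bounds, not of actual rates.
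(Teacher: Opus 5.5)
Your proposal is correct and follows the paper's proof step for step. Both arguments use the same triangle-inequality recursion obtained by subtracting $T_x$ and $T_{x'}$, with ball invariance and the $\ell$-Lipschitz bound on $Xp$, the same fixed point $\kappa=(1-\tau)/(1-\tau\ell)$, and the same induction $a_{m+1}-\ell^{m+1}\ge(1-\tau)(1-\ell^{m+1})\ge 0$, which you phrase as a convex combination. Your remark that varying inner allocations across outer steps still give $\prod_i a_{m_i}\ge\ell^{\sum_i m_i}$ is a harmless slight extension of the paper's per-evaluation statement $a_m^{1/m}\ge\ell$.
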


\begin{proof}
Fix $x, x'$ in the ball with inner sequences $y_j, y'_j$ warm started at $y_0 = x$, $y'_0 = x'$. Subtracting the inner maps $T_x, T_{x'}$ gives $\|y_{j+1} - y'_{j+1}\| \le (1-\tau)\,\|x - x'\| + \tau\ell\,\|y_j - y'_j\|$, which is the stated recurrence for the certified Lipschitz factors; its fixed point is $(1-\tau)/(1-\tau\ell) = \kappa$, and solving the affine recurrence yields $a_m = \kappa + (1-\kappa)(\tau\ell)^{m}$, strictly decreasing since $\tau\ell < 1$ and $\kappa < 1$. The lower bound follows by induction: $a_0 = 1 = \ell^{0}$, and $a_{m+1} - \ell^{m+1} \ge (1-\tau) + \tau\ell^{m+1} - \ell^{m+1} = (1-\tau)\,(1 - \ell^{m+1}) \ge 0$. The per-evaluation statement is $a_m^{1/m} \ge \ell$.
\end{proof}

Proposition~\ref{prop:matched} replaces the outer/inner bookkeeping of earlier drafts by a single closed formula and makes the interpolation of Proposition~\ref{prop:one-picard} quantitative; the certified factors of the inexact implicit hierarchy slide monotonically from convex splitting to the local branch as inner iterations are added, and, normalized per softmax evaluation, they never improve on the attention bound $\ell$, because larger $\Delta t$ improves $\kappa$ only at the price of the inner iterations needed to approach it. This is a statement about certified worst-case bounds, not an algorithmic lower bound; a Newton inner solver, whose per-iteration cost is dominated by one action of the Hessian $\nabla^2E = I - \beta X H(p) X^{\top}$, itself two products with $X$ plus either matrix-free iterations or a Woodbury solve of an $N\times N$ system at $O(N^3 + dN)$, converges locally quadratically, and a per-evaluation analysis of that regime is not attempted here. The practical conclusion we do draw is the one anticipated in Remark~\ref{rem:two-limits}: within the well-separated regime the relaxed family at $\theta_\star$ carries the best certified per-evaluation factor among the schemes analyzed in this paper, improving on the attention bound $\ell$ by the modest factor $1/(2-\ell)$, an improvement that Campaign~G shows to be certified-tight yet empirically unrealized whenever the certificate is slack; and the use cases of the implicit machinery are those outside that regime, moderate $\beta$, queries near basin boundaries, and stiff generalized models in the sense of Section~\ref{sec:generalized}, where dissipation without step size restriction is the property purchased.

\subsection{Accounting summary}\label{sec:accounting}

The per-step and per-evaluation costs of the schemes analyzed in this paper are as follows; memory is beyond the storage of $X$ itself. The family $\Psi_\theta$ uses one evaluation and $O(d)$ memory per step, with certified per-evaluation factor $q_\theta$ of Theorem~\ref{thm:overrelax}, minimized at $\theta_\star = 2/(2-\ell)$ where it equals $\ell/(2-\ell)$. The inexact implicit scheme uses $m$ evaluations per outer step and $O(d)$ memory with Picard, or $O(dN)$ transient memory with Woodbury--Newton, and its certified factor after $m$ inner evaluations is $a_m = \kappa + (1-\kappa)(\tau\ell)^{m} \ge \ell^{m}$ by Proposition~\ref{prop:matched}. The SAV--CN scheme of Theorem~\ref{thm:sav} uses one evaluation, one Sherman--Morrison solve of cost $O(d)$, and the storage of one extra state and one scalar, so it delivers second-order trajectory accuracy at the per-evaluation price of attention, which by Proposition~\ref{prop:lte} makes it, together with ETD, the scheme of choice whenever the trajectory rather than the endpoint is the object of interest. For batched retrieval of $q$ queries all evaluations become GEMMs of shape $(q\times d)(d\times N)$ and the accounting is unchanged. We do not model reduced precision here.

% --- end of Section 6 ---

% Section 7 -- draft v1
% Assumes: amsmath, amsthm, natbib; environments theorem, lemma, proposition, corollary, remark, definition, assumption numbered within section.
% Depends on: Section 2 (lem:majorization, prop:family), Section 3 (thm:dissipation, thm:single-limit), Section 4 (thm:cells, lem:interpolation), Section 5 (thm:sav).

\section{Perspective: damped difference-of-convex iterations and non-quadratic convex parts}\label{sec:generalized}

The collapse of the explicit schemes into the family $\Psi_\theta$ rested on the convex part of \eqref{eq:energy} being exactly $\tfrac12\|x\|^2$. The associative memory literature has moved past this case; sparse models replace the log-sum-exp by conjugates of Tsallis entropic regularizers \citep{hu2023sparse,santos2024hfy}, and normalized models absorb $\ell_2$- and layer normalization into the energy \citep{santos2024hfy,hoover2023}. The natural generalization of the family, underrelaxation of the difference-of-convex step in dual coordinates, is not ours; a Bregman-divergence reading of DCA was developed by \citet{faust2023}, and \citet{niu2026} has shown that classical DCA is the full-step explicit Euler discretization of a dual-coordinate dynamics, introduced precisely the damped, Bregman-regularized DCA that generalizes $\Psi_\theta$, and established monotone descent, Kurdyka--{\L}ojasiewicz convergence and rates for it. We therefore confine this section to what transfers from Sections~\ref{sec:dissipation} and \ref{sec:basins} to that setting, recording one exact identity, one corollary on overrelaxation, and the basin-preservation statement, which is the part not covered by the optimization literature; a full treatment, including the local theory in the Hessian metric of the convex part, is deferred to a companion paper.

\begin{assumption}\label{ass:dc}
$E = E_{+} - E_{-}$ on $\mathbb{R}^d$, where \textup{(i)} $E_{+}$ is strictly convex, differentiable, and of Legendre type, with $\nabla E_{+}$ a bijection onto the interior of $\operatorname{dom} E_{+}^{\ast}$ and continuous inverse $\nabla E_{+}^{\ast}$; \textup{(ii)} $E_{-}$ is convex and differentiable with $\nabla E_{-}$ taking values in a bounded set $\mathcal{K} \subseteq \operatorname{ran}\nabla E_{+}$; \textup{(iii)} $E$ is coercive. $D_{+}(y,x) = E_{+}(y) - E_{+}(x) - \langle \nabla E_{+}(x), y-x\rangle$ denotes the Bregman divergence of $E_{+}$.
\end{assumption}

For the energy \eqref{eq:energy}, $E_{+} = \tfrac12\|\cdot\|^2$ and everything below reduces to Sections~\ref{sec:framework}--\ref{sec:basins}. In the sparse models $\nabla E_{-}(x) = X\pi(\beta X^{\top}x)$ with $\pi$ the sparsemax or $\alpha$-entmax map; at the sparsemax endpoint $\nabla E_{-}$ is Lipschitz continuous but not analytic; the elementary arguments below require only continuity of $\nabla E_{\pm}$. In the normalized models the convex part carries the constraint through the indicator of the \emph{ball} $\{q : \|q\| \le r\}$, whose conjugate machinery produces the $\ell_2$-normalization, and of the ball intersected with an affine subspace for layer normalization \citep{santos2024hfy}; the sphere itself is not convex, and Assumption~\ref{ass:dc}(i) fails as stated for these nonsmooth convex parts, so their treatment requires a subdifferential extension that we defer.

\begin{definition}[Damped DCA in dual coordinates \citep{niu2026}]\label{def:mirror-family}
For $\theta \in (0,1]$, define $\Psi^{+}_\theta$ by $\nabla E_{+}\big(\Psi^{+}_\theta(x)\big) = (1-\theta)\,\nabla E_{+}(x) + \theta\,\nabla E_{-}(x)$, well defined under Assumption~\ref{ass:dc}. For $\theta = 1$ this is one exact DCA step, $\nabla E_{+}^{\ast}\circ\nabla E_{-}$, the generalized attention update of the model at hand; for quadratic $E_{+}$ the family reduces to \eqref{eq:family}.
\end{definition}

\begin{theorem}[Exact Bregman dissipation identity]\label{thm:bregman-descent}
Under Assumption~\ref{ass:dc}, define also $D_{-}(y,x) = E_{-}(y) - E_{-}(x) - \langle \nabla E_{-}(x),\, y-x\rangle \ge 0$. Then for every $\theta \in (0,1]$ and every $x$, the update $x^{+} = \Psi^{+}_\theta(x)$ satisfies the identity
\begin{equation}\label{eq:bregman-descent}
E(x^{+}) - E(x) \;=\; -\,\frac{1}{\theta}\, D_{+}\big(x,\, x^{+}\big) \;-\; \frac{1-\theta}{\theta}\, D_{+}\big(x^{+},\, x\big) \;-\; D_{-}\big(x^{+},\, x\big),
\end{equation}
whose right-hand side is a sum of nonpositive terms. Descent inequalities for damped DCA appear in \citet{niu2026} and, for $\theta = 1$, in \citet{faust2023}; we record \eqref{eq:bregman-descent} because it is exact, and because for quadratic $E_{+}$ it reads $E(\Psi_\theta(x)) = E(x) - \tfrac12\theta(2-\theta)\|\nabla E(x)\|^2 - D_{-}(\Psi_\theta(x), x)$, identifying the slack of the certified decrease \eqref{eq:descent-family} as exactly the Bregman remainder of the concave part.
\end{theorem}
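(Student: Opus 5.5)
The plan is to expand $E(x^{+})-E(x)=[E_{+}(x^{+})-E_{+}(x)]-[E_{-}(x^{+})-E_{-}(x)]$ and rewrite each bracket as a linear term plus a Bregman remainder, so that the defining relation of $\Psi^{+}_\theta$ can absorb the linear terms. For the concave part, the definition of $D_{-}$ gives directly
\[
E_{-}(x^{+})-E_{-}(x)=\langle \nabla E_{-}(x),\,x^{+}-x\rangle + D_{-}(x^{+},x).
\]
For the convex part, I would expand around $x^{+}$ rather than $x$:
\[
E_{+}(x^{+})-E_{+}(x)=\langle \nabla E_{+}(x^{+}),\,x^{+}-x\rangle - D_{+}(x,x^{+}).
\]
Subtracting these, $E(x^{+})-E(x)=\langle \nabla E_{+}(x^{+})-\nabla E_{-}(x),\,x^{+}-x\rangle - D_{+}(x,x^{+}) - D_{-}(x^{+},x)$.

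Next, I would eliminate $\nabla E_{-}(x)$ using the update rule. By Definition~\ref{def:mirror-family}, $\theta\nabla E_{-}(x)=\nabla E_{+}(x^{+})-(1-\theta)\nabla E_{+}(x)$. Hence
\[
\nabla E_{+}(x^{+})-\nabla E_{-}(x)=-\tfrac{1-\theta}{\theta}\big(\nabla E_{+}(x^{+})-\nabla E_{+}(x)\big).
\]
The linear term then becomes $-\tfrac{1-\theta}{\theta}\langle \nabla E_{+}(x^{+})-\nabla E_{+}(x),\,x^{+}-x\rangle$. The symmetrized Bregman identity $\langle \nabla E_{+}(x^{+})-\nabla E_{+}(x),\,x^{+}-x\rangle = D_{+}(x^{+},x)+D_{+}(x,x^{+})$ follows by adding the two definitions. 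Substituting it, the coefficient of $D_{+}(x,x^{+})$ is $-1-\tfrac{1-\theta}{\theta}=-\tfrac1\theta$, and the coefficient of $D_{+}(x^{+},x)$ is $-\tfrac{1-\theta}{\theta}$. This is exactly \eqref{eq:bregman-descent}.

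Nonpositivity of each term follows from convexity of $E_{\pm}$, which makes both Bregman divergences nonnegative, together with $\theta\in(0,1]$. Well-definedness of $x^{+}$ requires $(1-\theta)\nabla E_{+}(x)+\theta\nabla E_{-}(x)$ to lie in the range of $\nabla E_{+}$. This is the only point needing care. I would handle it by noting that under Assumption~\ref{ass:dc}(i) the range of $\nabla E_{+}$ is the interior of the convex set $\operatorname{dom}E_{+}^{\ast}$, hence convex, and that by (ii) $\nabla E_{-}(x)$ lies in that range, so the convex combination does too.

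For the quadratic specialization, take $D_{+}(y,x)=\tfrac12\|y-x\|^2$ and $x^{+}-x=-\theta\nabla E(x)$. Then both divergences equal $\tfrac12\theta^2\|\nabla E(x)\|^2$, and the total coefficient is $(\tfrac1\theta+\tfrac{1-\theta}{\theta})\tfrac{\theta^2}{2}=\tfrac12\theta(2-\theta)$. This yields the stated identity and the remark on the slack in \eqref{eq:descent-family}. I expect no real obstacle beyond choosing the expansion point for $E_{+}$ at $x^{+}$, which is what makes the damping relation cancel cleanly.
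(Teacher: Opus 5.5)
Your argument is correct and is essentially the paper's proof: both start from the definition of $D_{-}$, eliminate $\nabla E_{-}(x)$ through the dual relation defining $\Psi^{+}_\theta$, and close with the three-point identities for $D_{+}$. Your expansion of $E_{+}$ at $x^{+}$ followed by the symmetrized identity is just a repackaging of the paper's two three-point identities, and your convexity argument that the dual point lies in $\operatorname{ran}\nabla E_{+}$ (the interior of $\operatorname{dom}E_{+}^{\ast}$ is convex) correctly fills in what the paper only asserts in Definition~\ref{def:mirror-family}.
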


\begin{proof}
By the definition of $D_{-}$, $E(x^{+}) - E(x) = E_{+}(x^{+}) - E_{+}(x) - \langle \nabla E_{-}(x),\, x^{+}-x\rangle - D_{-}(x^{+}, x)$, with no inequality used. Solving the defining relation for $\nabla E_{-}(x) = \tfrac{1}{\theta}\nabla E_{+}(x^{+}) - \tfrac{1-\theta}{\theta}\nabla E_{+}(x)$ and substituting, then using the three-point identities $\langle \nabla E_{+}(x^{+}), x^{+}-x\rangle = E_{+}(x^{+}) - E_{+}(x) + D_{+}(x, x^{+})$ and $\langle \nabla E_{+}(x), x^{+}-x\rangle = E_{+}(x^{+}) - E_{+}(x) - D_{+}(x^{+}, x)$, the coefficient of $E_{+}(x^{+}) - E_{+}(x)$ is $1 - \tfrac{1}{\theta} + \tfrac{1-\theta}{\theta} = 0$, and what remains is \eqref{eq:bregman-descent}.
\end{proof}

\begin{corollary}[Overrelaxation under bounded asymmetry]\label{cor:overrelax}
Fix $x$ and $\theta>1$, and suppose the extrapolated dual point is admissible,
\[
 (1-\theta)\nabla E_{+}(x)+\theta\nabla E_{-}(x)\in\operatorname{ran}\nabla E_{+},
\]
so that $x^{+}$ is defined by the same dual relation as in Definition~\ref{def:mirror-family}. Suppose further that the resulting pair satisfies
\[
 D_{+}(x^{+},x)\le\kappa\,D_{+}(x,x^{+})
\]
for some $\kappa\ge1$. Then $E(x^{+})\le E(x)$, with strict decrease off critical points, whenever $\theta<1+1/\kappa$. A sufficient condition for the displayed asymmetry bound is that $E_{+}$ be $m$-strongly convex with $L$-Lipschitz gradient on a convex region containing the segment $[x,x^{+}]$, in which case one may take $\kappa=L/m$. In the quadratic case $\kappa=1$ globally, giving the full window $\theta\in(0,2)$.
\end{corollary}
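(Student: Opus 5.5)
The plan is to redo the proof of Theorem~\ref{thm:bregman-descent} for $\theta>1$. Its derivation used only the defining dual relation and the two three-point identities, never the sign of $1-\theta$. So as long as the extrapolated dual point lies in $\operatorname{ran}\nabla E_{+}$, the point $x^{+}=\nabla E_{+}^{\ast}\big((1-\theta)\nabla E_{+}(x)+\theta\nabla E_{-}(x)\big)$ is well defined. Repeating the same computation verbatim then gives the exact identity
\[
E(x^{+})-E(x) = -\tfrac1\theta D_{+}(x,x^{+}) - \tfrac{1-\theta}{\theta} D_{+}(x^{+},x) - D_{-}(x^{+},x).
\]
The only change from the underrelaxed case is that the middle term now has a positive coefficient $(\theta-1)/\theta$.

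Next I will insert the asymmetry hypothesis $D_{+}(x^{+},x)\le\kappa D_{+}(x,x^{+})$ into the identity. This yields
\[
E(x^{+})-E(x)\le -\tfrac{1}{\theta}\big(1-(\theta-1)\kappa\big)D_{+}(x,x^{+})-D_{-}(x^{+},x).
\]
The bracket is positive exactly when $\theta<1+1/\kappa$. Since $D_{-}\ge0$ by convexity of $E_{-}$, this gives $E(x^{+})\le E(x)$.

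For strict decrease off critical points I argue by contrapositive. If $E(x^{+})=E(x)$, then $D_{+}(x,x^{+})=0$, and strict convexity of $E_{+}$ forces $x^{+}=x$. The dual relation then reads $\theta(\nabla E_{+}(x)-\nabla E_{-}(x))=0$, which is $\nabla E(x)=0$. This step uses that a Bregman divergence of a strictly convex function vanishes only on the diagonal, and I expect it to be the one point needing care.

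For the sufficient condition, take $E_{+}$ $m$-strongly convex with $L$-Lipschitz gradient on a convex region containing $[x,x^{+}]$. Integrating along the segment gives the two-sided bounds $\tfrac m2\|x-x^{+}\|^2\le D_{+}(\cdot,\cdot)\le\tfrac L2\|x-x^{+}\|^2$ for both orderings of the arguments. Hence $D_{+}(x^{+},x)\le (L/m)\,D_{+}(x,x^{+})$, so one may take $\kappa=L/m$.

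In the quadratic case $D_{+}(y,x)=\tfrac12\|y-x\|^2$ is symmetric, so $\kappa=1$. The range of $\nabla E_{+}$ is all of $\mathbb{R}^d$, so admissibility is automatic, and the window becomes $\theta<2$. Combined with Theorem~\ref{thm:bregman-descent} for $\theta\le1$, this is the full window $(0,2)$, consistent with Theorem~\ref{thm:dissipation}(a).
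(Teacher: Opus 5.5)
Your proposal is correct and follows the paper's proof. Like the paper, you extend the exact identity \eqref{eq:bregman-descent} to $\theta>1$, discard $-D_{-}$, use the asymmetry bound to obtain the coefficient $\bigl(1-\kappa(\theta-1)\bigr)/\theta$ on $D_{+}(x,x^{+})$, and tie equality to $x^{+}=x$, hence to criticality, via strict convexity of $E_{+}$. Your explicit check of $\kappa=L/m$ by integrating along the segment, and your remark that admissibility is automatic in the quadratic case, fill in details that the paper only states.
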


\begin{proof}
The derivation of the identity \eqref{eq:bregman-descent} is valid for every $\theta > 0$; for $\theta > 1$, discarding the nonpositive term $-D_{-}(x^{+},x)$ gives $E(x^{+}) \le E(x) - \tfrac{1}{\theta}D_{+}(x,x^{+}) + \tfrac{\theta-1}{\theta}D_{+}(x^{+},x)$; bounding the last term by $\kappa\,\tfrac{\theta-1}{\theta}D_{+}(x,x^{+})$ yields $E(x^{+}) \le E(x) - \tfrac{1-\kappa(\theta-1)}{\theta}\,D_{+}(x,x^{+})$, which is strict decrease for $\theta < 1 + 1/\kappa$ unless $x^{+} = x$, that is, unless $x$ is critical.
\end{proof}

\begin{theorem}[Cell preservation for damped DCA]\label{thm:cells-general}
Under Assumption~\ref{ass:dc}, fix $\theta \in (0,1]$ and define the mirror segment $y_t = \nabla E_{+}^{\ast}\big((1-t\theta)\nabla E_{+}(x) + t\theta\,\nabla E_{-}(x)\big)$ for $t \in [0,1]$, a continuous path from $x$ to $\Psi^{+}_\theta(x)$ along which $E(y_t) \le E(x)$. Consequently the cell preservation Theorem~\ref{thm:cells} holds verbatim for the damped DCA family; for every local minimizer $x^\ast$ and every finite $c$ with $E(x^\ast)<c\le c^\ast(x^\ast)$, the energy cell $U_c(x^\ast)$ is contained in $\mathcal{B}_{\Psi^{+}_\theta}(x^\ast)$ for all $\theta \in (0,1]$. To our knowledge the basin statement is not contained in \citet{niu2026} or \citet{faust2023}, whose analyses concern descent and convergence rather than the geometry of attraction.
\end{theorem}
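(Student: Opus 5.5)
The proof has two parts. First we show that the mirror segment is a monotone interpolant. Then we run the argument of Theorem~\ref{thm:cells} with this interpolant in place of the straight segment.

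\emph{Well-posedness and continuity of the segment.} For $t\in[0,1]$ the dual point $z_t=(1-t\theta)\nabla E_{+}(x)+t\theta\,\nabla E_{-}(x)$ is a convex combination, since $t\theta\in[0,1]$. Its endpoints are $\nabla E_{+}(x)$, which lies in $\operatorname{ran}\nabla E_{+}$, and $\nabla E_{-}(x)\in\mathcal{K}\subseteq\operatorname{ran}\nabla E_{+}$. By Assumption~\ref{ass:dc}(i), $\operatorname{ran}\nabla E_{+}=\operatorname{int}\operatorname{dom}E_{+}^{\ast}$ is convex, so $z_t$ lies in it. Hence $y_t=\nabla E_{+}^{\ast}(z_t)$ is defined, continuous in $t$ by continuity of $\nabla E_{+}^{\ast}$, satisfies $y_0=x$, and satisfies $y_1=\Psi^{+}_\theta(x)$ by Definition~\ref{def:mirror-family}.

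\emph{Monotonicity.} The key observation is that $y_t=\Psi^{+}_{t\theta}(x)$, because the defining dual relation of Definition~\ref{def:mirror-family} with parameter $t\theta\in(0,1]$ is exactly $\nabla E_{+}(y_t)=z_t$. Theorem~\ref{thm:bregman-descent} therefore applies with $\theta$ replaced by $t\theta$ and gives
\[
E(y_t)-E(x)=-\tfrac{1}{t\theta}D_{+}(x,y_t)-\tfrac{1-t\theta}{t\theta}D_{+}(y_t,x)-D_{-}(y_t,x)\le 0
\]
for $t\in(0,1]$. The case $t=0$ is trivial. This replaces Lemma~\ref{lem:interpolation}, and no curvature constant is needed. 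Every coefficient is nonnegative precisely because $t\theta\le1$, which is why the statement is restricted to $\theta\in(0,1]$. Extending it to overrelaxation would require the admissibility and asymmetry hypotheses of Corollary~\ref{cor:overrelax}.

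\emph{Transfer of Theorem~\ref{thm:cells}.} Take $x^0\in U_c(x^\ast)$ and concatenate the mirror segments between consecutive iterates into a connected curve $\gamma$. On the $k$-th piece $E\le E(x^k)\le E(x^0)<c$, so $\gamma\subseteq\{E<c\}$. Since $\gamma$ is connected and contains $x^0$, it lies in $U_c(x^\ast)$. The cell is bounded by coercivity (Assumption~\ref{ass:dc}(iii)), so the orbit is precompact.

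\emph{Main obstacle: accumulation points are critical.} Here we cannot cite Theorem~\ref{thm:dissipation}, since there is no gradient-norm decrease. I would argue directly from the identity. Summing it shows $\sum_k D_{+}(x^k,x^{k+1})<\infty$, because $E$ is bounded below on the compact closure of the cell. Let $x^{k_j}\to\bar x$ and pass to a further subsequence with $x^{k_j+1}\to\hat x$. Lower semicontinuity of $D_{+}$ gives $D_{+}(\bar x,\hat x)=0$, and strict convexity of $E_{+}$ then forces $\hat x=\bar x$. Passing to the limit in $\nabla E_{+}(x^{k_j+1})=(1-\theta)\nabla E_{+}(x^{k_j})+\theta\nabla E_{-}(x^{k_j})$, using continuity of $\nabla E_{\pm}$, gives $\nabla E_{+}(\bar x)=\nabla E_{-}(\bar x)$, that is, $\nabla E(\bar x)=0$.

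\emph{Conclusion.} Because $E(x^k)$ is nonincreasing, $E(\bar x)\le E(x^0)<c$. As in Theorem~\ref{thm:cells}, $\bar x$ lies in the component $U_c(x^\ast)$. By Lemma~\ref{lem:closure} the only critical point there is $x^\ast$, so $\bar x=x^\ast$. The orbit is precompact and has $x^\ast$ as its unique accumulation point, hence it converges to $x^\ast$.
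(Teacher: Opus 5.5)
Your proof is correct and follows the paper's argument essentially step for step. You identify the mirror segment with $\Psi^{+}_{t\theta}(x)$, so that the exact Bregman identity makes it a monotone interpolant; the component argument then confines the orbit to the bounded cell, summability of $D_{+}(x^k,x^{k+1})$ together with strict convexity of $E_{+}$ collapses consecutive iterates, and passing to the limit in the dual relation yields criticality. The differences are cosmetic: you check explicitly that the dual segment stays in the convex set $\operatorname{ran}\nabla E_{+}=\operatorname{int}\operatorname{dom}E_{+}^{\ast}$, which the paper leaves implicit, and you extract a further subsequence with $x^{k_j+1}\to\hat x=\bar x$ instead of first proving $x^{k+1}-x^{k}\to 0$ for the whole sequence.
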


\begin{proof}
The point $y_t$ is $\Psi^{+}_{t\theta}(x)$ with $t\theta \in (0,1]$, so Theorem~\ref{thm:bregman-descent} gives $E(y_t) \le E(x)$, and $t \mapsto y_t$ is continuous by Assumption~\ref{ass:dc}(i); the component argument of the proof of Theorem~\ref{thm:cells} therefore confines the orbit, together with its mirror interpolant, to the bounded cell, so the orbit is precompact. Summing the identity \eqref{eq:bregman-descent} gives $D_{+}(x^{k}, x^{k+1}) \to 0$; since $E_{+}$ is strictly convex and $D_{+}$ is continuous on the compact closure of the cell, this forces $x^{k+1} - x^{k} \to 0$, for otherwise a subsequence with $\|x^{k+1} - x^{k}\| \ge \delta > 0$ would produce limits $\bar{x} \ne \bar{y}$ with $D_{+}(\bar{x}, \bar{y}) = 0$, contradicting strict convexity. If $x^{\dagger}$ is an accumulation point, passing to the limit in the defining relation of $\Psi^{+}_\theta$ along the subsequence, using $x^{k+1} - x^{k} \to 0$ and continuity of $\nabla E_{\pm}$ and $\nabla E_{+}^{\ast}$, yields $\nabla E_{+}(x^{\dagger}) = \nabla E_{-}(x^{\dagger})$, so $x^{\dagger}$ is critical; the open-ball argument of Theorem~\ref{thm:cells} places it in the cell, where the only critical point is $x^\ast$, and precompactness upgrades the unique accumulation point to convergence, $x^{k} \to x^\ast$. No analyticity or Kurdyka--{\L}ojasiewicz property is used.
\end{proof}

\begin{corollary}[Overrelaxed cell preservation]\label{cor:cells-overrelax}
Let $x^\ast$ be a local minimizer and let $c$ be finite with $E(x^\ast)<c\le c^\ast(x^\ast)$. Assume, in addition to Assumption~\ref{ass:dc}, that on the closure $\overline{U_c(x^\ast)}$ the asymmetry bound $D_{+}(y,x) \le \kappa\,D_{+}(x,y)$ holds for all $x,y$, as it does globally with $\kappa \le L/m$ when $E_{+}$ is $m$-strongly convex with $L$-Lipschitz gradient, and that the dual segment is admissible, $(1-s)\nabla E_{+}(x) + s\,\nabla E_{-}(x) \in \operatorname{ran}\nabla E_{+}$ for all $s \in [0,\theta]$ and all $x \in \overline{U_c(x^\ast)}$. Then Theorem~\ref{thm:cells-general} extends to every $\theta \in (0,\, 1 + 1/\kappa)$. Both hypotheses are imposed on the cell and its closure only, a set fixed in advance.
\end{corollary}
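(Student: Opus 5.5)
The plan is to rerun the proof of Theorem~\ref{thm:cells-general} with the mirror segment extended to the overrelaxed range. We need three ingredients: an energy-monotone continuous interpolant, a vanishing-step argument, and the component argument.

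\emph{Step 1 (interpolant).} Fix $\theta\in(0,1+1/\kappa)$ and $x$ in the cell. Define
\[
y_s=\nabla E_+^\ast\big((1-s)\nabla E_+(x)+s\nabla E_-(x)\big),\qquad s\in[0,\theta].
\]
By the dual-segment admissibility hypothesis this is well defined for all such $s$. It is continuous by Assumption~\ref{ass:dc}(i), and $y_s=\Psi^+_s(x)$. For $s\le1$, Theorem~\ref{thm:bregman-descent} gives $E(y_s)\le E(x)$. For $s\in(1,\theta]$ we invoke Corollary~\ref{cor:overrelax} at parameter $s<1+1/\kappa$, which requires $D_+(y_s,x)\le\kappa D_+(x,y_s)$.

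The asymmetry bound is assumed only on $\overline{U_c(x^\ast)}$, so we cannot know in advance that $y_s$ lies there. This is the main obstacle, and we resolve it by a continuity/first-exit argument. Let
\[
s_0=\sup\{s\in[0,\theta]: y_{s'}\in U_c(x^\ast)\ \text{for all } s'\le s\}.
\]
Because $E(x)<c$ and the cell is open, $s_0>0$. By continuity, $y_{s_0}\in\overline{U_c(x^\ast)}$, so the asymmetry bound applies to the pair $(x,y_{s_0})$ and Corollary~\ref{cor:overrelax} gives $E(y_{s_0})\le E(x)<c$. A point of the closure with $E<c$ lies in the cell; this is the boundary fact $\partial U_c\subseteq\{E=c\}$ from the proof of Lemma~\ref{lem:closure}. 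Openness then lets us continue past $s_0$ unless $s_0=\theta$. Hence the whole path $\{y_s\}$ stays in the cell with $E(y_s)\le E(x)$, and in particular $\Psi^+_\theta(x)\in U_c(x^\ast)$.

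\emph{Step 2 (vanishing steps and criticality).} By Step 1 the orbit stays in the cell. The closure of the cell is compact by coercivity, so the orbit is precompact. The inequality in the proof of Corollary~\ref{cor:overrelax} reads
\[
E(x^{k+1})\le E(x^k)-\tfrac{1-\kappa(\theta-1)}{\theta}\,D_+(x^k,x^{k+1}),
\]
whose coefficient is positive. Summing and using the lower bound on $E$ gives $D_+(x^k,x^{k+1})\to0$. For $\theta\le1$ the case is already handled by Theorem~\ref{thm:cells-general}. The strict-convexity compactness argument of that proof then yields $x^{k+1}-x^k\to0$. Passing to the limit in the dual relation, now with coefficient $\theta>1$, gives $\theta\big(\nabla E_-(x^\dagger)-\nabla E_+(x^\dagger)\big)=0$ at every accumulation point $x^\dagger$, so $x^\dagger$ is critical.

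\emph{Step 3 (conclusion).} Every accumulation point has energy at most $E(x^0)<c$. The open-ball argument of Theorem~\ref{thm:cells} places it in $U_c(x^\ast)$, where the only critical point is $x^\ast$ by Lemma~\ref{lem:closure}. Precompactness then gives $x^k\to x^\ast$.
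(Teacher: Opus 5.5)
Your proposal is correct and follows essentially the same route as the paper. Your first-exit argument at $s_0$ is the paper's open--closed connectedness argument on the set $S$, with one minor difference: you apply the asymmetry bound directly at the closure point $y_{s_0}$, whereas the paper applies it at the interior points $y_{t_n}\in U_c(x^\ast)$ and passes to the limit by continuity of $E$. The remaining steps match the paper: confinement giving precompactness, summing the Corollary~\ref{cor:overrelax} decrease, strict-convexity separation, criticality from the dual relation, and the open-ball argument.
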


\begin{proof}
Let $x \in U_c(x^\ast)$ and let $y_t = \Psi^{+}_{t\theta}(x)$, $t \in [0,1]$, be the mirror segment, well defined and continuous by admissibility. The set $S = \{t \in [0,1] : y_s \in U_c(x^\ast) \text{ for all } s \le t\}$ is nonempty, since $y_0 = x$, and open in $[0,1]$ because $U_c(x^\ast)$ is open and $t \mapsto y_t$ continuous. It is also closed, as if $t_n \uparrow t$ with $t_n \in S$, then for each $n$ both $x$ and $y_{t_n}$ lie in $\overline{U_c(x^\ast)}$, where the hypotheses hold, so Theorem~\ref{thm:bregman-descent} for $t_n\theta \le 1$ and Corollary~\ref{cor:overrelax} for $t_n\theta > 1$ give $E(y_{t_n}) \le E(x) < c$; continuity yields $E(y_t) \le E(x) < c$, and $y_t$ lies in the same connected component of $\{E < c\}$ as $x$, that is, in $U_c(x^\ast)$. By connectedness of $[0,1]$, $S = [0,1]$; the interpolant is monotone and stays in the cell, so no circularity arises, and the orbit is confined to the bounded cell. Boundedness of the iterates now follows from this confinement, the convex-combination argument of the proof of Theorem~\ref{thm:cells-general} being unavailable for $\theta > 1$. Summing the decrease of Corollary~\ref{cor:overrelax} gives $\tfrac{1-\kappa(\theta-1)}{\theta}\sum_k D_{+}(x^{k}, x^{k+1}) < \infty$, so $D_{+}(x^{k}, x^{k+1}) \to 0$, and the remaining steps of the proof of Theorem~\ref{thm:cells-general}, separation by strict convexity, criticality of accumulation points by passage to the limit in the defining relation, and the open-ball argument, apply verbatim.
\end{proof}

We close with the finite-step side. The generalized Eyre splitting $\tfrac{1}{\Delta t}(x^{k+1}-x^{k}) = -\nabla E_{+}(x^{k+1}) + \nabla E_{-}(x^{k})$ remains well posed and unconditionally dissipative for every $\Delta t > 0$ by the same two-line convexity argument as in Theorem~\ref{thm:bregman-descent}, and tends, as $\Delta t \to \infty$, to the exact DCA step; for finite $\Delta t$ it differs from every member of the damped family unless $E_{+}$ is quadratic, so that the algebraic collapse of Proposition~\ref{prop:family} is a quadratic phenomenon even though dissipation and cells are not. Line-search overrelaxations of the DCA step have been studied as boosted DCA \citep{aragon2018bdca}, complementary to the certified window of Corollary~\ref{cor:overrelax}. The remaining components of a full theory, the local contraction analysis in the Hessian metric of $E_{+}$, the sharp margins of the sparse maps for which exact finite-time convergence is available \citep{santos2024hfy}, the nonsmooth normalized models, and a second-order construction replacing Section~\ref{sec:sav}, are the subject of the companion paper.

\section{Numerical experiments}\label{sec:experiments}

Nine computational campaigns test the results of Sections~\ref{sec:dissipation}--\ref{sec:cost} at small scale, each designed around a falsifiable prediction of a specific theorem: sharpness of the dissipation inequality (Theorem~\ref{thm:dissipation}), validity and slack of the local contraction factors (Theorems~\ref{thm:local-family} and \ref{thm:local-ie}), the error constants and orders of Proposition~\ref{prop:lte} and Theorem~\ref{thm:sav}, the tunneling threshold of Proposition~\ref{prop:tunneling}, the localization of basin discrepancies above the escape energy predicted by Theorem~\ref{thm:cells}, and the failure of true-energy dissipation for SAV anticipated after Theorem~\ref{thm:sav}. All computations are in IEEE double precision with a single fixed seed; the accompanying scripts, plain NumPy and SciPy, reproduce every number and figure of this section and are archived with the manuscript.

\subsection{Setup}\label{sec:exp-setup}

The quantitative campaigns (A, B, C, F) use $d = 64$, $N = 4$ patterns given by $\sqrt{d}$ times orthonormal random columns, so $M = \sigma_{\max}(X) = 8$ and $\Delta_\mu = 64$, with $\beta = 0.15$; this leaves the contractive regime, $\nu = \tfrac{\beta}{2}\sigma_{\max}^2 - 1 = 3.8 > 0$, and the retrieval condition \eqref{eq:retrieval-cond} holds at every pattern, so the model is genuinely multistable, while keeping the retrieval-ball quantities of Lemma~\ref{lem:concentration} at measurable size, $\varepsilon_\mu(1) = 2.24\times 10^{-3}$ and $\ell_\mu(1) = 4.30\times 10^{-2}$ on the ball of radius $r = 1$. Campaign D uses a one-dimensional two-pattern model where the global proximal step can be resolved to high numerical precision by global scalar minimization, and campaign E a two-dimensional three-pattern model where basins can be resolved on a grid.

\subsection{Campaign A: sharpness of the dissipation inequality}\label{sec:exp-A}

For $2{,}000$ points sampled uniformly in the ball of radius $2M$ and $\theta \in \{0.1, 0.2, \dots, 1.9\}$, the ratio of the observed decrease $E(x) - E(\Psi_\theta(x))$ to the certified decrease $\tfrac12\theta(2-\theta)\|\nabla E(x)\|^2$ of Theorem~\ref{thm:dissipation}(a) was computed for every sample. The minimum ratio over all $38{,}000$ pairs rounds to $1.0000$ at four decimal places throughout $\theta \le 1.6$, rising to $1.0004$ at $\theta = 1.9$; the inequality is never violated and is numerically sharp across the entire window, including near its endpoint, confirming that the unit-curvature majorization of Lemma~\ref{lem:majorization} captures the worst-case curvature of \eqref{eq:energy} exactly rather than up to constants; by the identity noted after Theorem~\ref{thm:dissipation}, the observed slack equals the Bregman remainder $D_{-}$ of the log-sum-exp along the step, strictly positive except for steps in $\ker X^{\top}$, where the log-sum-exp is flat and equality is exact; the ratio therefore exceeds one by an amount below the rounding shown.

\subsection{Campaign B: local contraction factors}\label{sec:exp-B}

The attractor $x^\ast_1$ was computed to machine precision; its distance to the pattern is $1.88\times 10^{-3}$, inside the certified bound $2M\varepsilon_\mu(1) = 3.58\times 10^{-2}$ of Theorem~\ref{thm:local-family}(b). Table~\ref{tab:contraction} compares the certified contraction factors with the worst observed per-step factors over $20$ random initializations at radius $0.8$ and up to $25$ steps each. Every bound holds; the family and implicit bounds are tight to within one to three percent except at the attention endpoint $\theta = 1$, where the certified $q_1 = \ell_\mu(r) = 4.30\times 10^{-2}$ overestimates the observed $2.61\times 10^{-3}$ by the slack accumulated in the constant $2$ of Lemma~\ref{lem:concentration}(iii) and in taking the supremum of $1 - p_\mu$ over the whole ball; the empirical spectral norm $\|\beta X H(p(x^\ast_1))X^{\top}\|_2 = 2.61\times 10^{-3}$ shows that essentially all of the slack is in the ball supremum, not in the linearization.

\begin{table}[t]
\centering
\begin{tabular}{lcc@{\qquad}lcc}
\toprule
scheme & certified & observed & scheme & certified & observed \\
\midrule
$\Psi_{0.25}$ & $0.7608$ & $0.7500$ & IE, $\Delta t = 0.25$ & $0.8069$ & $0.8000$ \\
$\Psi_{0.5}$  & $0.5215$ & $0.5002$ & IE, $\Delta t = 1$    & $0.5110$ & $0.5000$ \\
$\Psi_{1}$    & $0.0430$ & $0.0026$ & IE, $\Delta t = 4$    & $0.2071$ & $0.2001$ \\
              &          &          & IE, $\Delta t = 16$   & $0.0613$ & $0.0588$ \\
\bottomrule
\end{tabular}
\caption{Certified contraction factors $q_\theta = 1-\theta(1-\ell_\mu(r))$ and $\kappa_{\Delta t} = (1+\Delta t(1-\ell_\mu(r)))^{-1}$ against worst observed per-step factors near $x^\ast_1$ ($d=64$, $N=4$, $\beta=0.15$, $r=0.8$).}
\label{tab:contraction}
\end{table}

\subsection{Campaign C: orders and error constants}\label{sec:exp-C}

Trajectories from a point at distance $0.9$ from the attractor were integrated to $T = 4$ and compared with an RK4 reference at step $10^{-3}$. Table~\ref{tab:order} reports global errors and empirical convergence orders. All three family members converge at order one and SAV at order two, as asserted by Proposition~\ref{prop:lte} and Theorem~\ref{thm:sav}; the quantitative content is in the constants. At $\Delta t = 0.4$ the ETD error is $5.06\times 10^{-6}$ against $1.10\times 10^{-2}$ for EE, a factor of $2.2\times 10^{3}$, of the order of $1/\ell_\mu$, confirming that the ETD error constant is carried entirely by the softmax curvature as computed in Proposition~\ref{prop:lte}; on this trajectory the first-order ETD scheme is more accurate than the second-order SAV scheme at every tested step size, a reminder that near well-separated attractors the asymptotic order is not the operative figure of merit.

\begin{table}[t]
\centering
\begin{tabular}{lccccc c}
\toprule
 & $\Delta t=0.4$ & $0.2$ & $0.1$ & $0.05$ & $0.025$ & EOC \\
\midrule
EE  & $1.10\times10^{-2}$ & $6.11\times10^{-3}$ & $3.18\times10^{-3}$ & $1.62\times10^{-3}$ & $8.17\times10^{-4}$ & $0.85\to0.99$ \\
CS  & $1.46\times10^{-2}$ & $6.99\times10^{-3}$ & $3.40\times10^{-3}$ & $1.68\times10^{-3}$ & $8.31\times10^{-4}$ & $1.07\to1.01$ \\
ETD & $5.06\times10^{-6}$ & $2.36\times10^{-6}$ & $1.14\times10^{-6}$ & $5.62\times10^{-7}$ & $2.79\times10^{-7}$ & $1.10\to1.01$ \\
SAV & $2.23\times10^{-3}$ & $5.34\times10^{-4}$ & $1.33\times10^{-4}$ & $3.32\times10^{-5}$ & $8.32\times10^{-6}$ & $2.06\to2.00$ \\
\bottomrule
\end{tabular}
\caption{Global trajectory errors at $T=4$ against an RK4 reference, and empirical orders of convergence across the step-size range.}
\label{tab:order}
\end{table}

\subsection{Campaign D: the tunneling window}\label{sec:exp-D}

In the one-dimensional model with patterns $\xi_1 = 1$, $\xi_2 = -1.3$ and $\beta = 3$, the attractors sit at $0.9976$ and $-1.2997$ with energies $-0.5003$ and $-0.8450$, so $\delta = 0.3447$, and the query $x^0 = 0.8$ lies in the shallow cell. Proposition~\ref{prop:tunneling} certifies tunneling of the global proximal step for $\Delta t > \Delta t^\ast(x^0) = 6.40$; the empirical onset, located to high numerical precision by global scalar minimization of the proximal functional over a logarithmic sweep in $\Delta t$, is $5.36$. The bound is thus valid and conservative by seventeen percent in this configuration, and the certified-preservation threshold $1/\nu = 0.33$ leaves the window $[0.33,\ 5.36]$ in which, as anticipated after Proposition~\ref{prop:tunneling}, neither result applies; across this window the computed global step remained in the shallow cell, suggesting that the true tunneling threshold is governed by the geometry of the proximal envelope rather than by $1/\nu$, a gap the theory leaves open.

\subsection{Campaign E: basin discrepancies live above the escape energy}\label{sec:exp-E}

This campaign tests the falsifiable content of Theorem~\ref{thm:cells} directly against the flow, with no proxy. For the two-dimensional three-pattern model, the reference basins were computed by integrating \eqref{eq:flow} with an adaptive Runge--Kutta method at relative and absolute tolerance $10^{-11}$ from every node of a $301\times301$ grid over $[-3,3]^2$ to $T = 60$; a rerun at tolerance $10^{-9}$ on a random subsample of $400$ nodes reproduced every basin label, and every node converged. The critical points located by damped Newton iteration from a $41\times41$ multistart, a census that is numerical rather than certified, were classified by the Hessian: three minima, at energies $-2.00$, $-1.63$ and $-1.75$; three index-one saddles, at energies $-0.770$, $-1.145$ and $-0.869$; and one maximum. An escape level was inferred numerically for each attractor from the saddle values by a sublevel-component connectivity test, giving $\widehat{c}^{\,\ast} = -1.145$ for the two attractors separated by the lowest saddle and $\widehat{c}^{\,\ast} = -0.869$ for the third; these rest on the numerical census above and are not certified. The stable manifolds of the three saddles, integrated backward from the saddles along their stable eigenvectors, are the numerically resolved continuous separatrices, and Figure~\ref{fig:basins} overlays them, together with the level sets $E = \widehat{c}^{\,\ast}$, on the basin partitions of the flow and of $\Psi_{1.9}$.

Discrete basins were then computed for $\Psi_\theta$ with $\theta \in \{0.5, 1.0, 1.5, 1.9\}$ on the same grid, every node converging in every case, and each disagreeing node was located both in energy and in distance to the nearest point of the continuous separatrices. The disagreement fractions with the flow are $3.3\times10^{-5}$, $1.1\times10^{-4}$, $5.3\times10^{-3}$ and $0.135$ respectively; negligible throughout $\theta \le 1$, appreciable at $\theta = 1.5$, and massive at $\theta = 1.9$, in agreement with the approach to the collapse threshold of Proposition~\ref{prop:collapse}. The prediction of Theorem~\ref{thm:cells} is that no node can disagree while lying inside a cell below the escape energy of the attractor that the flow assigns to it; the count of such violations, evaluated node by node against the attractor-specific numerically inferred escape energy $\widehat{c}^{\,\ast}$, is \emph{zero} at every $\theta$, over $12{,}727$ disagreeing nodes in total. The geometry of the disagreements matches the qualitative picture of Remark~\ref{rem:frontier}; for $\theta \le 1.5$ they hug the separatrices, with median distance $0.003$, $0.007$ and $0.033$ and maximum distance $0.02$, $0.03$ and $0.14$ (the grid spacing is $0.02$), whereas at $\theta = 1.9$ they spread to median distance $1.19$ from the separatrices, that is, deep into the continuous basins, and are then excluded only by the escape-energy criterion, exactly as the theory permits. Together with the exact certificate agreement of Campaigns A and G, this is the sharpest test of the paper's central claim: the certified core is respected without exception, and every discrepancy is confined to the region the theory leaves open.

\begin{figure}[t]
\centering
\includegraphics[width=\textwidth]{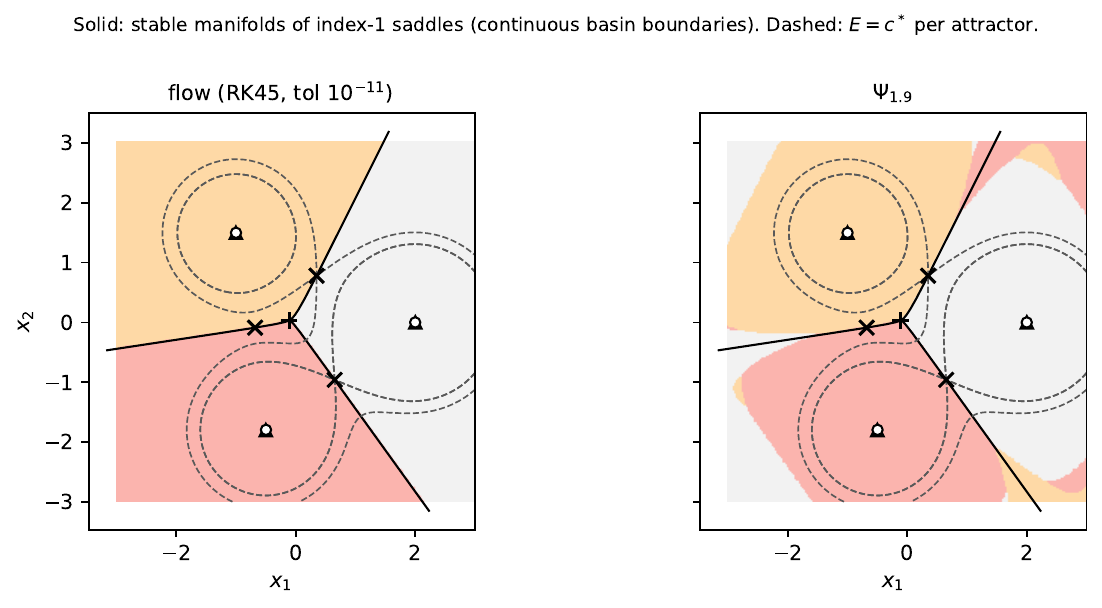}
\caption{Campaign E. Basins of attraction of the three attractors for the flow, computed by adaptive Runge--Kutta integration at tolerance $10^{-11}$ (left), and for $\Psi_{1.9}$ (right), on a $301\times301$ grid; triangles mark the patterns, circles the minima, crosses the index-one saddles, plus the maximum. Solid curves are the stable manifolds of the saddles, the numerically resolved continuous separatrices; dashed curves are the numerically inferred level sets $E = \widehat{c}^{\,\ast}$ of each attractor. Every basin discrepancy between the two panels lies outside the dashed level set of the attractor the flow assigns to it, as Theorem~\ref{thm:cells} requires.}
\label{fig:basins}
\end{figure}

\subsection{Campaign F: SAV dissipates the modified energy, not the true one}\label{sec:exp-F}

Finally, the caveat recorded after Theorem~\ref{thm:sav} is not an artifact of the proof. For the scheme of Definition~\ref{def:sav} with $\gamma = \tfrac12$, over ten random initializations at radius $1.5M$ and $200$ steps each, the modified energy $\tilde{E}^{k}$ obeyed the discrete law \eqref{eq:sav-law} to within $1.7\times10^{-14}$ per step at every tested $\Delta t$, machine-precision confirmation of the exact identity. The true energy $E(x^{k})$, by contrast, increased along the iteration in a nonnegligible fraction of steps at every step size, with worst per-step increases of $1.8\times10^{-3}$ at $\Delta t = 0.5$, $1.6\times10^{-2}$ at $\Delta t = 1$, and $3.2$ at $\Delta t = 2$. The order-one violation at $\Delta t = 2$, exceeding $3$ in energy units, shows that at large steps the auxiliary variable can decouple from $\sqrt{E_1^{\gamma} + C_0}$ by an amount that is not perturbative, so that SAV's unconditional stability, while real, certifies a genuinely different functional; for retrieval applications this means SAV should be operated at accuracy-limited rather than stability-limited step sizes, and it gives practical content to the combination problem stated in Section~\ref{sec:sav}: second order together with exact dissipation of the true energy, a monotone interpolant, linear implicitness, and one softmax evaluation per step.

\subsection{Campaign G: certified versus oracle overrelaxation}\label{sec:exp-G}

Theorem~\ref{thm:overrelax} and Remark~\ref{rem:oracle} were tested on the configuration of Section~\ref{sec:exp-setup} with $\beta$ swept over $\{0.09, 0.10, 0.12, 0.15, 0.20, 0.25\}$ at $r = 1$, spanning $\ell_\mu(r)$ from $0.46$ down to $5.9\times10^{-4}$. Table~\ref{tab:overrelax} reports, for each $\beta$, the certificate $\ell$, the attractor spectral radius $\lambda_{\max}$, the sampled ball supremum $\bar\lambda$ of $\|A\|$, the certified and observed factors at the certified optimum $\theta_\star$, and the mean iteration counts to tolerance $10^{-10}$ at $\theta = 1$, at $\theta_\star$, and at the oracle $\theta_{\mathrm{o}} = 2/(2-\lambda_{\max})$. Three predictions are confirmed. The observed factor at $\theta = 1$ equals $\lambda_{\max}$ to three digits, and the smallest eigenvalue of $A(x^\ast_1)$ vanishes to machine precision, as forced by the rank bound of Remark~\ref{rem:oracle}. The observed factor at $\theta_\star$ agrees with the certified $q_{\theta_\star} = \ell/(2-\ell)$ to three or four digits at every $\beta$, since the overrelaxed certificate is exactly tight, the kernel modes realizing $|1-\theta_\star|$. And the oracle factor equals $\lambda_{\max}/(2-\lambda_{\max})$ within a few percent, an improvement of at most a factor of two over attention worth at most one iteration anywhere in the sweep. The consequence anticipated in Remark~\ref{rem:oracle} follows; because, throughout the sweep, the slack $\ell/\lambda_{\max}$ never drops below $2(N-1) = 6$, the certified overrelaxation, although optimal among certified bounds and exactly tight, is slower than plain attention in observed iterations at every tested $\beta$, for instance $18.7$ versus $7.6$ iterations at $\beta = 0.09$ and $6.0$ versus $3.9$ at $\beta = 0.15$. Overrelaxation pays only when the certificate tracks the true ball supremum within a factor $2-\ell$; with the present constants it does not, and attention, sitting one certified notch below the optimum, is empirically unbeatable within the family in this regime.

\begin{table}[t]
\centering
\begin{tabular}{lcccccccc}
\toprule
$\beta$ & $\ell$ & $\lambda_{\max}$ & $\bar\lambda$ & $q_{\theta_\star}$ (cert = obs) & $\theta_\star$ & it($1$) & it($\theta_\star$) & it($\theta_{\mathrm{o}}$) \\
\midrule
$0.09$ & $0.460$ & $7.69\times10^{-2}$ & $8.54\times10^{-2}$ & $0.2984$ & $1.298$ & $7.6$ & $18.7$ & $7.0$ \\
$0.10$ & $0.316$ & $4.40\times10^{-2}$ & $5.45\times10^{-2}$ & $0.1877$ & $1.188$ & $6.8$ & $14.0$ & $6.0$ \\
$0.12$ & $0.145$ & $1.44\times10^{-2}$ & $2.04\times10^{-2}$ & $0.0783$ & $1.078$ & $5.1$ & $9.0$ & $5.0$ \\
$0.15$ & $4.30\times10^{-2}$ & $2.61\times10^{-3}$ & $3.85\times10^{-3}$ & $0.0220$ & $1.022$ & $3.9$ & $6.0$ & $4.0$ \\
$0.20$ & $5.20\times10^{-3}$ & $1.41\times10^{-4}$ & $2.18\times10^{-4}$ & $2.61\times10^{-3}$ & $1.003$ & $3.0$ & $4.0$ & $3.0$ \\
$0.25$ & $5.90\times10^{-4}$ & $7.20\times10^{-6}$ & $1.47\times10^{-5}$ & $2.95\times10^{-4}$ & $1.000$ & $2.0$ & $3.0$ & $2.0$ \\
\bottomrule
\end{tabular}
\caption{Campaign G. Certified versus observed overrelaxation across the $\beta$ sweep. Iteration counts are means over ten initializations at radius $0.8\rho$ to tolerance $10^{-10}$; at every $\beta$ the observed factor at $\theta_\star$ reproduces the certified $\ell/(2-\ell)$ to the digits shown.}
\label{tab:overrelax}
\end{table}

\subsection{Campaign H: correlated patterns and the certificate--retrieval gap}\label{sec:exp-H}

All quantitative campaigns so far use orthogonal patterns with $N \ll d$, the most favourable geometry. Campaign~H sweeps correlated patterns $\xi_\mu \propto \sqrt{1-\rho}\,g_\mu + \sqrt{\rho}\,g_0$, normalized to $\|\xi_\mu\| = \sqrt d$, with correlation $\rho \in \{0, 0.3, 0.6\}$, ratios $N/d \in \{1/16, 1/2, 2\}$ including the overcomplete case $N > d$, three inverse temperatures per ratio, and three seeds per configuration, reporting worst cases over seeds. Two questions are asked of every configuration. First, is the unconditional dissipation inequality of Theorem~\ref{thm:dissipation}(a) respected and sharp? Over $27$ configurations, $\theta \in \{0.5,1,1.5,1.9\}$ and $500$ random points each, the minimum ratio of observed to certified decrease rounds to $1.0000$ in every configuration, with the largest observed slack $2\times10^{-3}$ at $(d,N,\rho) = (64,32,0.6)$, $\beta = 0.15$; the majorization of Lemma~\ref{lem:majorization} is exact irrespective of correlation, aspect ratio, or $\beta$, as its proof, which uses none of these, predicts. Second, how do the certified retrieval condition \eqref{eq:retrieval-cond} and empirical retrieval compare? Table~\ref{tab:corr} reports, per configuration, the fraction of patterns satisfying \eqref{eq:retrieval-cond} at $r = 0.05\sqrt d$ and the fraction retrieved empirically, defined by an attractor within $0.05\sqrt d$ of the pattern with observed local factor $\lambda_{\max}(A(x^\ast)) < 0.5$.

\begin{table}[t]
\centering\small
\begin{tabular}{lcc@{\;\;}ccc@{\;\;}ccc}
\toprule
& & & \multicolumn{3}{c}{certified fraction} & \multicolumn{3}{c}{empirical fraction} \\
$(d,N)$ & $\rho$ & $\beta$ triple & & & & & & \\
\midrule
$(64,4)$ & $0$   & $0.15/0.3/0.6$ & $1.00$ & $1.00$ & $1.00$ & $1.00$ & $1.00$ & $1.00$ \\
$(64,4)$ & $0.3$ & $0.15/0.3/0.6$ & $0.00$ & $1.00$ & $1.00$ & $1.00$ & $1.00$ & $1.00$ \\
$(64,4)$ & $0.6$ & $0.15/0.3/0.6$ & $0.00$ & $0.00$ & $1.00$ & $0.00$ & $1.00$ & $1.00$ \\
$(64,32)$ & $0$   & $0.15/0.3/0.6$ & $0.00$ & $1.00$ & $1.00$ & $1.00$ & $1.00$ & $1.00$ \\
$(64,32)$ & $0.3$ & $0.15/0.3/0.6$ & $0.00$ & $0.06$ & $1.00$ & $0.16$ & $1.00$ & $1.00$ \\
$(64,32)$ & $0.6$ & $0.15/0.3/0.6$ & $0.00$ & $0.00$ & $0.00$ & $0.00$ & $0.88$ & $1.00$ \\
$(16,32)$ & $0$   & $0.5/1/2$ & $0.00$ & $0.06$ & $0.81$ & $0.25$ & $1.00$ & $1.00$ \\
$(16,32)$ & $0.3$ & $0.5/1/2$ & $0.00$ & $0.00$ & $0.00$ & $0.00$ & $0.41$ & $1.00$ \\
$(16,32)$ & $0.6$ & $0.5/1/2$ & $0.00$ & $0.00$ & $0.00$ & $0.00$ & $0.00$ & $0.31$ \\
\bottomrule
\end{tabular}
\caption{Campaign H. Worst case over three seeds of the fraction of patterns satisfying the certified retrieval condition \eqref{eq:retrieval-cond} at $r = 0.05\sqrt d$, and of the fraction retrieved empirically, across correlation $\rho$, aspect ratio $N/d$ and inverse temperature $\beta$. Certified retrieval was accompanied by empirical retrieval in every configuration; the converse fails increasingly with correlation and overcompleteness.}
\label{tab:corr}
\end{table}

Three conclusions follow. Certified retrieval was accompanied by empirical retrieval in every configuration, despite the empirical criterion $\lambda_{\max}(A(x^\ast)) < 0.5$ being stricter than the certified bound $\lambda_{\max} \le \ell < 1$, and where both hold the certificate slack $\ell/\lambda_{\max}$ ranges from $5.6$ in the orthogonal $(64,4)$ case at $\beta = 0.15$ to above $10^{4}$ in overcomplete cases, consistent with Campaign~G; in our experiments the slack never drops below $2(N-1)$. The certified condition fails well before empirical retrieval does; at $\rho = 0.3$, $(64,4)$, $\beta = 0.15$, no pattern is certified but all four are retrieved with median error $7\times10^{-3}$ and median local factor $0.064$, and in the overcomplete regime $(16,32)$ at $\rho = 0.3$, $\beta = 2$, retrieval is perfect while nothing is certified. The two mechanisms behind the loss of certification are visible in the table's covariates: correlation reduces the minimum separation $\Delta_\mu/\|\xi\|^2$ from $0.82$ to $0.31$ at $\rho = 0.6$ for $(64,4)$, and both correlation and overcompleteness inflate $\sigma_{\max}^2(X)/d$ from $1.3$ to above $24$, which enters $\ell_\mu(r)$ linearly and $\nu$ through the uniqueness threshold. Finally, empirical retrieval itself degrades and eventually fails, at $\rho = 0.6$ for small $\beta$ and throughout the most correlated overcomplete row, where the median retrieval error exceeds $0.35\sqrt d$; the certified condition is therefore not vacuously conservative but tracks a genuine failure of the memory at a factor of two to four in $\beta$. Since Theorems~\ref{thm:cells} and \ref{thm:dissipation} carry no separation hypothesis, their guarantees are unaffected by correlation; what correlation and overcompleteness erode is the explicit certification of retrieval balls in Section~\ref{sec:local}, whose constant $\sigma_{\max}^2(X)$ is the natural target for sharpening in exactly the regime, $N > d$ with correlated patterns, where modern Hopfield layers operate.

\subsection{Campaign I: the discrete-gradient benchmark}\label{sec:exp-I}

Section~\ref{sec:sav} named the midpoint discrete gradient of \citet{gonzalez1996} as the exact-dissipation benchmark against which SAV should be read; Campaign~I runs it. With $\bar\nabla E(x,y) = \nabla E(\tfrac{x+y}{2}) + \big(E(y) - E(x) - \langle \nabla E(\tfrac{x+y}{2}), y-x\rangle\big)\,(y-x)/\|y-x\|^2$, which satisfies $\langle \bar\nabla E(x,y), y-x\rangle = E(y) - E(x)$ identically, the scheme $x^{k+1} = x^{k} - \Delta t\,\bar\nabla E(x^{k}, x^{k+1})$ obeys $E(x^{k+1}) - E(x^{k}) = -\tfrac{1}{\Delta t}\|x^{k+1}-x^{k}\|^2$ exactly, for the true energy and for every $\Delta t$; the implicit step was solved by a hybrid Newton method with finite-difference Jacobian, and cost is counted with one softmax-gradient evaluation as unit and one energy evaluation, a single product with $X^{\top}$ plus a log-sum-exp, as half a unit. On the trajectory of Campaign~C the scheme is second order to the digits, with empirical orders $2.00$ across all four step-size halvings and global errors $8.8\times10^{-4}$, $2.2\times10^{-4}$, $5.5\times10^{-5}$, $1.4\times10^{-5}$ and $3.4\times10^{-6}$ at $\Delta t = 0.4$ down to $0.025$, about two and a half times smaller than SAV at each step; the exact-dissipation identity holds to $1.4\times10^{-14}$ per step. On the setup of Campaign~F, ten initializations at radius $1.5M$ and $200$ steps, where SAV raised the true energy by up to $1.8\times10^{-3}$, $1.6\times10^{-2}$ and $3.2$ at $\Delta t = 0.5$, $1$ and $2$, the discrete gradient shows worst increases of $2.1\times10^{-14}$, $2.8\times10^{-14}$, $4.3\times10^{-14}$ and, at $\Delta t = 4$, $9.6\times10^{-13}$, with identity residuals below $2\times10^{-12}$ throughout; roundoff, not dissipation failure. The property is bought at a price. Away from the attractors the fixed-point (Picard) inner solver does not converge at large steps, and the generic Newton solver spent between $200$ and $300$ evaluation units per step, against $1$ for SAV and for every member of the relaxed family; a Newton solver with the analytic Jacobian of $\bar\nabla E$, which costs a few Hessian actions per iteration, would reduce this substantially but not to the one-evaluation cost of the linearly implicit schemes. The comparison thus places the two second-order schemes at opposite ends of the same trade-off announced in Section~\ref{sec:sav}; SAV, one evaluation and an exact law for a modified energy that the true energy can violate by order one at large steps; the discrete gradient, an exact law for the true energy at every step and, in our generic reference implementation, a solver cost two orders of magnitude higher, a gap that analytic Jacobians or Jacobian-free Newton--Krylov solvers would reduce substantially and that we do not claim intrinsic. Neither is known to admit a monotone interpolant in the sense of Section~\ref{sec:basins}, and the combination problem stated there stands.

\subsection{Reproducibility}\label{sec:exp-repro}

Campaigns A--F run in under two minutes on a single CPU core from one self-contained script with fixed seed $20260809$; Campaigns G, H and I are separate scripts with the same seed conventions and run in a few minutes each. The accompanying scripts reproduce every numerical table and figure in this article, and the raw outputs needed to verify the reported values are included in the reproducibility archive. Persistent repository identifiers are given in the Data and Code Availability statement below.

\section{Conclusions}\label{sec:conclusions}

The question this paper poses to a time discretization of modern Hopfield retrieval dynamics is not whether it dissipates the energy or preserves the equilibria, but whether it preserves the assignment of queries to memories. Theorem~\ref{thm:cells} answers it for an explicit and elementary part of each basin; every energy cell below the escape energy of its attractor, that is, every connected component of a sublevel set containing that attractor and no other critical point, lies simultaneously in the basin of the flow, in the basin of every member of the relaxed attention family $\Psi_\theta$ with $\theta \in (0,2)$, and in the basin of the implicit Euler relation throughout its uniqueness regime. Energy cells are common certified basin cores, and the mechanism is a single structural fact, the unit-curvature majorization of Lemma~\ref{lem:majorization}, which holds uniformly in the inverse temperature and yields both the unconditional dissipation of Theorem~\ref{thm:dissipation} and a monotone continuous interpolant of every discrete orbit; the argument inside a cell is elementary and needs no {\L}ojasiewicz-type input. Two mechanisms delimit the statement: the global proximal map tunnels out of non-global cells beyond an explicit step threshold, at which point non-global local minimizers cease to satisfy $x \in \mathcal{P}_{\Delta t}(x)$, and the explicit Euler map turns the attractor into a repeller beyond $\theta \approx 2$.

Two lessons from the certified local theory deserve emphasis because they cut against a naive reading. First, the local certificate $\ell_\mu(r)$ of Lemma~\ref{lem:concentration} is conservative; it exceeded the observed spectral radius by a factor of at least $2(N-1)$ throughout our experiments, and by more than four orders of magnitude for correlated overcomplete patterns, so that certified retrieval fails well before retrieval itself does. Second, and as a consequence, certified optimality and empirical performance can point in opposite directions; the overrelaxation $\theta_\star = 2/(2-\ell)$ of Theorem~\ref{thm:overrelax} minimizes the certified contraction factor, and its bound is exactly attained when $N \le d$, yet plain attention converged faster in observed iterations at every inverse temperature we tested, precisely because the certificate is slack. We regard reporting both sides of this gap as part of the result rather than as a caveat to it.

Four problems remain open. The first is quantitative rather than qualitative; above the escape energy our results localize the possible discrepancies between continuous and discrete basins but do not bound them, and a measure-theoretic or Hausdorff estimate of the deformation as a function of $\theta$ and $\beta$ would complete the picture; the unstable fixed points recently shown to be attached to faces of the pattern polytope \citep{beise2026} are the natural organizing objects. The second is the combination problem of Section~\ref{sec:sav}; discrete-gradient methods dissipate the true energy exactly at second order but are fully implicit, the SAV scheme is linearly implicit at one softmax evaluation per step but dissipates only a modified energy, and no scheme known to us achieves second order, exact dissipation of $E$ for every step size, a monotone interpolant, and one evaluation per step. The third is sharpening the certificate itself, whose constant $\sigma_{\max}^2(X)$ degrades exactly in the correlated overcomplete regime where Hopfield layers operate. The fourth is the nonsmooth side of Section~\ref{sec:generalized}; Corollary~\ref{cor:cells-overrelax} closes the smooth Bregman extension by preserving cells throughout a certified overrelaxed window, but normalized retrieval models place the constraint inside the convex part through the indicator of a ball, so that $\nabla E_{+}$ becomes a subdifferential and the mirror family a normalize-after-mix update. Extending the cell-preservation theorem to that nonsmooth setting appears to us the most valuable single extension left open here.

\section*{Acknowledgments}
This work was supported by the Universidad de M\'alaga, Spain, through project PPRO-B4-2026-004 of its Plan Propio de Investigaci\'on, Transferencia y Divulgaci\'on Cient\'ifica.

\section*{Data and Code Availability}
The source code, reproducibility scripts, numerical outputs, and figure-generation scripts supporting this study are publicly available at Zenodo, DOI: 10.5281/zenodo.22048179, and are mirrored at the GitHub repository FrancisRVillatoro/modern-hopfield-basin-discretizations.

\section*{Competing Interests}
The author declares no competing interests.

\section*{AI-Assisted Preparation Disclosure}
Anthropic Claude Opus 5 and OpenAI ChatGPT was used during manuscript preparation for language editing, mathematical consistency checks, and LaTeX editing. The author independently reviewed and verified the mathematical statements, numerical results, citations, and final text and takes full responsibility for the content.

% --- end of Section 8 ---

% ---------- appendices -----------------------------------------------------
\appendix

\section{Genericity for two patterns (proof of Lemma~\ref{lem:genericity})}\label{app:barrier}

Let $N = 2$ and let $\xi_1, \xi_2 \in \mathbb{R}^d$ be linearly independent (so $d \ge 2$); write $v = \xi_1 - \xi_2 \ne 0$ and $p(x) = (p_1, p_2)$ with $p_1 p_2 = e^{\beta v^{\top}x}/(1 + e^{\beta v^{\top}x})^2 > 0$ for every $x$. Then $H(p) = p_1 p_2 \begin{psmallmatrix} 1 & -1 \\ -1 & 1 \end{psmallmatrix}$, so $X H(p) X^{\top} = p_1 p_2\, v v^{\top}$ and
\[
Df(x)\, f(x) \;=\; -f(x) \;+\; \beta\, p_1 p_2\, \langle v, f(x)\rangle\, v ,
\]
which lies in $\operatorname{span}\{f(x)\}$ if and only if $f(x) \perp v$ or $f(x) \parallel v$. Suppose, for contradiction, that this dichotomy holds at every $x \in \mathbb{R}^d$. The sets $\{x : \langle v, f(x)\rangle = 0\}$ and $\{x : f(x) \wedge v = 0\}$ are closed and cover $\mathbb{R}^d$, so by the Baire category theorem at least one contains a nonempty open set. If $\langle v, f\rangle \equiv 0$ on an open set $O$, differentiating gives $Df(x)^{\top} v = 0$ on $O$, and since $Df^{\top} = Df = -I + \beta p_1 p_2\, vv^{\top}$ this reads $v = \beta p_1 p_2 \|v\|^2\, v$, forcing $p_1 p_2 \equiv 1/(\beta\|v\|^2)$ on $O$; but $p_1 p_2$ is a strictly non-constant function of $v^{\top}x$ alone, and the open set $O$ contains segments in the direction $v$, a contradiction. If instead $f \wedge v \equiv 0$ on a nonempty open set, then, all components of $f \wedge v$ being real analytic on $\mathbb{R}^d$, they vanish identically, so $f(x) \parallel v$ for every $x$; evaluating at $x = 0$, where $p(0) = (\tfrac12, \tfrac12)$ independently of the patterns and of $\beta$, gives $f(0) = \tfrac12(\xi_1 + \xi_2) \ne 0$, and $(\xi_1 + \xi_2) \parallel (\xi_1 - \xi_2)$ holds if and only if $\xi_1 \wedge \xi_2 = 0$, that is, if and only if the patterns are linearly dependent, which is excluded by hypothesis. Hence a point $x_0$ with $Df(x_0)f(x_0) \notin \operatorname{span}\{f(x_0)\}$ exists, proving Lemma~\ref{lem:genericity}; the extension to $N \ge 3$ is left as the conjecture stated after Proposition~\ref{prop:barrier}. \qed

\section{Second-order convergence of the SAV--CN scheme}\label{app:sav}

Throughout, $\gamma \in (0,1)$ is fixed, $E_1^{\gamma}(x) = \tfrac{\gamma}{2}\|x\|^2 - \operatorname{lse}_\beta(X^{\top}x)$, $u(x) = E_1^{\gamma}(x) + C_0$ with $C_0 = M^2/(2\gamma) + \beta^{-1}\log N + 1$, and $b = \nabla E_1^{\gamma}/\sqrt{u}$ with $\nabla E_1^{\gamma}(x) = \gamma x - X p(x)$.

\paragraph{Bounds on $b$.} Since $\operatorname{lse}_\beta(X^{\top}x) \le M\|x\| + \beta^{-1}\log N$, completing the square gives $u(x) \ge \big(\sqrt{\gamma/2}\,\|x\| - M/\sqrt{2\gamma}\,\big)^2 + 1 \ge 1$ for every $x$, so $u$ grows quadratically at infinity, while $g(x):=\nabla E_1^{\gamma}(x)$ satisfies $\|g(x)\| \le \gamma\|x\| + M$. Hence $\|b(x)\| \le B_\gamma$ for some finite constant, with $\|b(x)\| \to \sqrt{2\gamma}$ as $\|x\| \to \infty$. Moreover
\[
 Db(x)=u(x)^{-1/2}\nabla^2E_1^{\gamma}(x)-\frac12u(x)^{-3/2}g(x)g(x)^{\top}.
\]
By Lemma~\ref{lem:structure}(b), $\|\nabla^2E_1^{\gamma}(x)\|\le\gamma+\tfrac{\beta}{2}\sigma_{\max}^2(X)$ globally. The first term in $Db$ is therefore bounded because $u\ge1$, and the second is bounded because $\|g(x)\|^2=O(\|x\|^2)$ whereas $u(x)^{3/2}=\Theta(\|x\|^3)$ at infinity. Thus $\sup_x\|Db(x)\|<\infty$, so $b$ is globally Lipschitz; write $L_b$ for a global Lipschitz constant. This is the regularity used below.

\paragraph{Consistency.} Insert the exact solution of the extended system $\dot{x} = -(1-\gamma)x - b(x)\,r$, $\dot{r} = \tfrac12\langle b(x), \dot{x}\rangle$, initialized on the manifold $r = \sqrt{u(x)}$, into \eqref{eq:sav}, and write $t^{k+\frac12} = (k+\tfrac12)\Delta t$. The trapezoidal averages satisfy $\tfrac12(x(t^{k+1})+x(t^{k})) = x(t^{k+\frac12}) + O(\Delta t^2)$ and $\tfrac12(r(t^{k+1})+r(t^{k})) = r(t^{k+\frac12}) + O(\Delta t^2)$; the divided differences equal the midpoint derivatives up to $O(\Delta t^2)$ by the midpoint expansion; and the frozen coefficient satisfies $b(\bar{x}^{k+\frac12}) = b(x(t^{k+\frac12})) + O(\Delta t^2)$ because the linear extrapolation $\tfrac32 x(t^{k}) - \tfrac12 x(t^{k-1})$ agrees with $x(t^{k+\frac12})$ to $O(\Delta t^2)$ and $b$ is globally Lipschitz. Both defects are therefore $O(\Delta t^2)$ per unit step, uniformly on bounded time intervals. The startup contributes an error $O(\Delta t^2)$, since one ETD step has local truncation error $O(\Delta t^2)$ by Proposition~\ref{prop:lte}, and $r^{1} = \sqrt{u(x^{1})}$ is exact at $x^{1}$, so $\eta^{1} = O(\Delta t^{2})$ below.

\paragraph{Stability and convergence.} Let $e_x^{k} = x^{k} - x(t^{k})$, $e_r^{k} = r^{k} - r(t^{k})$ and $\eta^{k} = \|e_x^{k}\| + |e_r^{k}|$. Subtracting the defect equations from \eqref{eq:sav} and eliminating $e_r^{k+1}$ as in the solvability step of Theorem~\ref{thm:sav}, the update matrix $\mathcal{A}^{k} = (\tfrac{1}{\Delta t} + \tfrac{1-\gamma}{2}) I + \tfrac14\, b^{k} (b^{k})^{\top}$ satisfies $\mathcal{A}^{k} \succeq (\tfrac{1}{\Delta t} + \tfrac{1-\gamma}{2}) I$, so $\|(\mathcal{A}^{k})^{-1}\| \le \Delta t$; the difference of frozen coefficients obeys $\|b(\bar{x}^{k+\frac12}) - b(\bar{y}^{k+\frac12})\| \le L_b\,(\tfrac32\|e_x^{k}\| + \tfrac12\|e_x^{k-1}\|)$; and $r^{k}$ and $r(t^{k})$ are bounded uniformly on $[0,T]$ by the discrete law \eqref{eq:sav-law} and the continuous energy law, respectively. Collecting terms yields, for a constant $C$ depending on $\gamma$, $B_\gamma$, $L_b$, the data and $T$, but not on $\Delta t$,
\[
\eta^{k+1} \;\le\; (1 + C\,\Delta t)\,\eta^{k} \;+\; C\,\Delta t\,\eta^{k-1} \;+\; C\,\Delta t^{3},
\]
and the two-step discrete Gr\"onwall inequality with $\eta^{0} = 0$ and $\eta^{1} = O(\Delta t^{2})$ gives 
\[\max_{0 \le k \le T/\Delta t}\, \eta^{k} \le C_T\,\Delta t^{2}.
\] 
The scheme is thus unconditionally solvable and second-order convergent on bounded time intervals, with no step-size restriction and no truncation of the nonlinearity. \qed

% ---------- bibliography ---------------------------------------------------
\bibliographystyle{abbrvnat}
\bibliography{refs_v1}

\end{document}